\newtheorem{teo}{Theorem}[section]
\newtheorem{thm}[teo]{Theorem}
\newtheorem{prop}[teo]{Proposition}
\newtheorem{lemma}[teo]{Lemma}
\newtheorem{cor}[teo]{Corollary}
\newtheorem{conj}[teo]{Conjecture}
\newtheorem{ques}[teo]{Question}
\newtheorem{rmk}[teo]{Remark}
\theoremstyle{Definition}
\newtheorem{defn}[teo]{Definition}
\theoremstyle{remark}
\numberwithin{equation}{section}
\newcommand{\IQbar}{\overline{\mathbb{Q}}}
\newcommand{\BC}{{\mathbb {C}}}
\newcommand{\BE}{{\mathbb {E}}}
\newcommand{\BG}{{\mathbb {G}}}
\newcommand{\BH}{{\mathbb {H}}}
\newcommand{\BQ}{{\mathbb {Q}}}
\newcommand{\BR}{{\mathbb {R}}}
\newcommand{\BS}{{\mathbb {S}}}
\newcommand{\BV}{{\mathbb {V}}}
\newcommand{\BZ}{{\mathbb {Z}}}
\newcommand{\CF}{{\mathcal {F}}}
\newcommand{\sD}{{\mathscr{D}}}
\newcommand{\sF}{{\mathscr {F}}}
\newcommand{\sJ}{{\mathscr {J}}}
\newcommand{\sM}{{\mathscr {M}}}
\newcommand{\sO}{{\mathscr {O}}}
\newcommand{\sV}{{\mathscr {V}}}
\newcommand{\sX}{{\mathscr {X}}}
\newcommand{\Aut}{{\mathrm{Aut}}}
\newcommand{\bs}{\backslash}
\newcommand{\Ch}{{\mathrm{Ch}}}
\newcommand{\GL}{{\mathrm{GL}}}
\newcommand{\Hom}{{\mathrm{Hom}}}
\newcommand{\MT}{{\mathrm{MT}}}
\newcommand{\wt}{\widetilde}
\newcommand{\prim}{\mathrm{prim}}
\newcommand{\lra}{{\longrightarrow}}
\newcommand{\iso}{{\overset\sim\lra}}
\renewcommand{\cong}{\simeq}
\begin{document}

\title{Rank of normal functions and Betti strata}

\author{Ziyang Gao}
\address{Department of Mathematics, UCLA, Los Angeles, CA 90095, USA}
\email{ziyang.gao@math.ucla.edu}

\author{Shou-Wu Zhang}
\address{Department of Mathematics, Princeton University, Princeton, NJ 08544, USA}
\email{shouwu@princeton.edu}

\subjclass[2020]{Primary 14D07; Secondary 14C25}



\begin{abstract}
In a recent work of the authors, we proved the generic positivity of the Beilinson--Bloch heights of the Gross--Schoen and Ceresa cycles. The geometric part of the proof was to prove the maximality of the rank of the associated normal function and the Zariski closedness of the Betti strata.

In this paper, we generalize these geometric results to an arbitrary family of homologically trivial cycles. More generally, we prove a formula to compute the Betti rank and prove the Zariski closedness of the Betti strata, for any admissible normal function of a variation of Hodge structures of weight $-1$. We also define and prove results about degeneracy loci. In the end, we go back to the arithmetic setting and ask some questions about the rationality of the Betti strata and the torsion loci.
\end{abstract}

\maketitle

\section{Introduction}

\subsection{Background: Beilinson--Bloch height and positivity}
Height theory, introduced by Weil in 1928 and further developed by many others, is a fundamental tool in Diophantine and Arithmetic Geometry. For example, the N\'{e}ron--Tate height for points on abelian varieties and its positivity were a key ingredient in the proof of the Mordell--Weil theorem, in Vojta's proof of the Mordell conjecture, and the formulation of the Birch and Swinnerton-Dyer conjecture. It is desirable to extend the definition of heights from points to higher cycles of a projective variety $X$ defined over $\IQbar$.

 In the 1980s, Beilinson \cite{Be} and Bloch \cite{Bl} independently proposed a conditional definition of heights for homologically trivial cycles on $X$ using integral models. 
 They conjectured the positivity of their heights, an extension of the Mordell--Weil theorem, and an extension of the BSD conjecture.  Little is known in this context.

 \medskip
 Recently in \cite{GZ}, we proved the generic positivity of the heights of the Gross--Schoen cycles and the Ceresa cycles, including a height inequality and Northcott property. In these cases  the Beilinson--Bloch heights are known to be well-defined, and their heights have important connections to diophantine geometry and special values of $L$-series.  We also formulated some general conjectures to extend these results to general families of homologically trivial cycles. 

The proof of \cite{GZ} has two parts: the arithmetic part and the geometric part, bridged by Hain's works on the 
 Archimedean local height pairing. The arithmetic part requires the construction of an adelic line bundle over the base, {\it with semi-positive curvature form}, such that the height function is the Beilinson--Bloch height. Another crucial ingredient of the arithmetic part is a volume identity, relating the arithmetic volume and the geometric volume of the generic fiber of the adelic line bundles. Both steps of the arithmetic part are confined to the Gross--Schoen and Ceresa cycles.

{\it The goal of this note is to generalize the geometric part of \cite{GZ} to an arbitrary family}. This geometric part studies the admissible normal function associated with the family of the cycles, and the main results are the Zariski closedness of the Betti strata and the formula to compute the Betti rank. We proved these results for the Gross--Schoen and Ceresa cycles in \cite{GZ}, and in this note we generalize the arguments to an arbitrary family.

\subsection{Setup and main results}\label{SubsectionResultsGeom}
Let $S$ be a quasi-projective variety over $\BC$ and let $(\BV_{\BZ},\sF^{\bullet}) \rightarrow S$ be a variation of Hodge structures (VHS) of  weight $-1$. In practice, every VHS of odd weight can be reduced to the case of weight $-1$ by a suitable Tate twist.

The {\it intermediate Jacobian} is defined to be the family of compact complex tori (write $\sV := \BV_{\BZ}\otimes_{\BZ_S}\sO_S$ for the holomorphic vector associated with $\BV_{\BZ}$)
\begin{equation}\label{EqIntermediateJac}
\pi \colon \sJ(\BV_{\BZ})=\BV_\BZ\bs \sV / \sF^0\sV \lra S.
\end{equation}
Let $\nu$ be an admissible normal function, \textit{i.e.} a holomorphic section of $\pi$ which gives rise to a variation of mixed Hodge structures (VMHS) $\BE_{\nu}$ on $S$ with some nice properties; for a more precise definition see the beginning of $\S$\ref{SubsectionUnivPeriodMap}. 

\smallskip

The fiberwise  isomorphism $\BV_{\BR,s} \iso  \BV_{\BC,s}/\sF^0_s = \sV_s/\sF^0_s$ makes  
 $\sJ(\BV_{\BZ})$ into a local system of real tori
\begin{equation}\label{EqLocSysTori}
\sJ(\BV_{\BZ}) \xrightarrow{\sim} \BV_\BR/\BV_\BZ.
\end{equation}
Let $\CF_{\mathrm{Betti}}$ denote the induced foliation, which we call the {\em Betti foliation}. More precisely, for any point $x\in \sJ(\BV_{\BZ})$, there is a local section  $\sigma \colon U \lra \sJ(\BV_{\BZ})$ 
from a neighborhood $U$ of $\pi(x)$ in $S^{\mathrm{an}}$, with $x \in \sigma(U)$,  represented by a section of the local system $\BV_\BR$. 
 The manifolds $\sigma (U)$ gluing together to a foliation $\CF_{\mathrm{Betti}}$ on $\sJ(\BV_{\BZ})$.  In particular, all torsion multi-sections are leaves of $\CF_{\mathrm{Betti}}$.

\medskip

\subsubsection{Betti strata}
For each integer $t \ge 0$, set
\begin{equation}\label{EqBettiStrataSet}
S^{\mathrm{Betti}}(t):=\left\{s\in S(\BC): \dim _{\nu (s)} (\nu(S)\cap \CF_{\mathrm{Betti}})\ge t\right\}
\end{equation}
where by abuse of notation $\nu(S) \cap \CF_{\mathrm{Betti}}$ means the intersection with the leaves. 
This subset is, by definition, real-analytic. We then have the following {\em Betti strata} on $S$
\begin{equation}\label{EqBettiStrata}
\emptyset = S^{\mathrm{Betti}}(\dim S +1) \subseteq S^{\mathrm{Betti}}(\dim S) \subseteq \cdots \subseteq S^{\mathrm{Betti}}(1) \subseteq S^{\mathrm{Betti}}(0) = S.
\end{equation}

A main result of this note is the algebraicity of the Betti strata.
\begin{thm}[Theorem~\ref{ThmZariskiClosedDegLoci}]\label{ThmZarClosedIntro}
For each  $t\ge 0$,  $S^{\mathrm{Betti}}(t)$ is Zariski closed in  $S$.
\end{thm}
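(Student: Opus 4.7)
The strategy is to exhibit each $S^{\mathrm{Betti}}(t)$ as a closed complex-analytic subset of $S$ that is additionally definable in $\BR_{\mathrm{an,exp}}$, and then to invoke the Peterzil--Starchenko definable Chow theorem to conclude Zariski closedness.

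The first step is a Betti rank formula. Locally on a simply connected open $U\subseteq S^{\mathrm{an}}$, trivialize $\BV_{\BZ}|_U$ and lift $\nu|_U$ to a holomorphic section $\tilde\nu\colon U\to \sV|_U$. The composition
\[
\beta\colon U\xrightarrow{\tilde\nu}\sV|_U\twoheadrightarrow (\sV/\sF^0\sV)|_U
\]
is holomorphic, and postcomposing with the fibrewise $\BR$-linear isomorphism of (\ref{EqLocSysTori}) recovers, up to translation by $\BV_{\BZ,s_0}$, the local Betti map $b_U\colon U\to \BV_{\BR,s_0}/\BV_{\BZ,s_0}$. Because $\beta$ is holomorphic and the identification $\sV/\sF^0\sV\cong \BV_\BR$ is $\BR$-linear, the $\BC$-linear derivative $\psi_s\colon T_sS\to \sV_s/\sF^0\sV_s$ of $\beta$ satisfies $\mathrm{rk}_{\BR}(db_U)_s=2\,\mathrm{rk}_{\BC}\psi_s$; moreover the local intersection $\nu(S)\cap\CF_{\mathrm{Betti}}$ at $\nu(s)$ is the complex-analytic fibre $\beta^{-1}(\beta(s))$, of complex dimension $\dim S-\mathrm{rk}_{\BC}\psi_s$. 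Consequently
\[
S^{\mathrm{Betti}}(t)=\{s\in S : \mathrm{rk}_{\BC}\psi_s\le \dim S-t\},
\]
which is cut out locally by vanishing of $(\dim S-t+1)$-minors of a holomorphic map of vector bundles; in particular it is a closed complex-analytic subset of $S^{\mathrm{an}}$.

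The second step upgrades complex-analyticity to algebraicity via o-minimality. Admissibility of $\nu$ ensures that the VMHS $\BE_\nu$ extends with log singularities over a smooth compactification of $S$, and by the definability results of Bakker--Klingler--Tsimerman for period maps, extended from pure VHS to the admissible VMHS setting, the associated period map is definable in $\BR_{\mathrm{an,exp}}$. It follows that the holomorphic bundle map $\psi$, and therefore each rank locus $S^{\mathrm{Betti}}(t)$, is definable in $\BR_{\mathrm{an,exp}}$. The Peterzil--Starchenko definable Chow theorem, asserting that a closed complex-analytic definable subset of an algebraic variety is Zariski closed, now yields the conclusion.

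The main obstacle is the definability input: one must verify that the extension class of the VMHS $\BE_\nu$, not merely the pure period map of the underlying VHS, is definable in $\BR_{\mathrm{an,exp}}$, so that $\psi$ and its rank loci inherit this definability. This relies on Kashiwara's analysis of admissible VMHS near the boundary together with an o-minimal GAGA-type statement for the normal function itself. Once this is in place, the rank formula plus definable Chow finish the argument essentially formally.
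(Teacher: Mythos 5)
Your Step 1 contains two fatal errors. First, the asserted identity $S^{\mathrm{Betti}}(t)=\{s\in S:\mathrm{rk}_{\BC}\psi_s\le \dim S-t\}$ confuses a fiber-dimension locus with a rank-drop locus of a differential: the local dimension of $\nu(S)\cap\CF_{\mathrm{Betti}}$ at $\nu(s)$ is \emph{not} $\dim S-\mathrm{rk}_{\BC}\psi_s$ (fiber dimension of a holomorphic map is not computed by the pointwise rank of its derivative). For instance, an isolated point where a non-torsion section of an elliptic surface is tangent to a leaf of $\CF_{\mathrm{Betti}}$ lies in your rank locus for $t=1$ but not in $S^{\mathrm{Betti}}(1)$. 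Second, $\psi_s$ (which is the map $\nu_{\mathrm{Betti},s}$ of \eqref{DefnBettiIntro}) is \emph{not} a holomorphic bundle map: although each leaf of $\CF_{\mathrm{Betti}}$ is a complex submanifold, the distribution $T\CF_{\mathrm{Betti}}$ varies only real-analytically (in the elliptic modular example one finds $\nu_{\mathrm{Betti},s}(\partial_s)=z'(s)-\frac{\Im z(s)}{\Im\tau(s)}\tau'(s)$, visibly non-holomorphic in $s$), so the loci $\{\mathrm{rk}_{\BC}\psi_s\le k\}$ are only real-analytic, are in general neither complex-analytic nor Zariski closed, and are therefore not amenable to definable Chow. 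Consequently your reduction collapses before o-minimality even enters; and the definability of the mixed period map, which you single out as the main obstacle, is in fact the known ingredient (Bakker--Brunebarbe--Klingler--Tsimerman), not the missing one.

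The genuine content of the theorem is precisely to upgrade the a priori only real-analytic set $S^{\mathrm{Betti}}(t)$ to an algebraic one, and this requires functional transcendence input that your proposal never uses. The paper's proof passes to the mixed period map $\varphi_\nu\colon S\to\Gamma\backslash\sD$, applies the mixed Ax--Schanuel theorem (Chiu, Gao--Klingler) to the sets $\{a\}\times\wt{C}$ produced by Lemma~\ref{LemmaBettiFoliationBettiRank}, and then invokes the Geometric Zilber--Pink finiteness statement (Proposition~\ref{PropFinitenessBogomolov}) to obtain finitely many pairs $(\sD_j,N_j)$ so that
\[
S^{\mathrm{Betti}}(t)=S_{\ge t}\cup\bigcup\nolimits_{0\le r\le t-1,\,1\le j\le k}S_{\ge r}\cap\varphi^{-1}\left(E_j(t-r)\right),
\]
where each $E_j$ is a fiber-dimension locus of the \emph{holomorphic} and definable map $[p_{N_j}]$ restricted to the algebraic image $\varphi(S)$ (algebraicity of the image is \cite{BBTmixed}); only at that point do the BBKT definability and definable Chow apply, since those sets really are closed, complex-analytic and definable. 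If you want to salvage your outline, you must replace the rank-locus reduction by an argument of this Ax--Schanuel/Zilber--Pink type; no purely local computation with $\psi$ can work, because the rank loci it produces are genuinely different, and generally non-algebraic, sets.
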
 
When $S$ is a subvariety of $\sM_g$ -- the moduli space of smooth projective curves of genus $g$ -- and $\nu$ is the Gross--Schoen or Ceresa normal function, this is \cite[Thm.~7.2]{GZ}.

\medskip
\subsubsection{Betti rank}
Now assume that $S$ is smooth. Then the Betti foliation induces a decomposition $T_x \sJ(\BV_{\BZ}) = T_x \CF_{\mathrm{Betti}} \oplus T_x \sJ(\BV_{\BZ})_{\pi(x)} $ for each $x \in \sJ(\BV_{\BZ})$. Thus for each $s \in S(\BC)$ we have a linear map
\begin{equation}\label{DefnBettiIntro}
\nu_{\mathrm{Betti},s} \colon T_s S \xrightarrow{\mathrm{d}\nu} T_{\nu(s)} \sJ(\BV_{\BZ}) \rightarrow T_{\nu(s)} \sJ(\BV_{\BZ})_s.
\end{equation}
The {\em Betti rank} of $\nu$ is defined to be:
\begin{equation}\label{EqBettiRankIntro}
r(\nu) := \max_{s \in S(\BC)} \dim\nu_{\mathrm{Betti},s}(T_s S).
\end{equation}
A trivial upper bound is $r(\nu) \le \min\{\dim S, \frac{1}{2} \dim \BV_{\BQ,s} \}$ for any $s \in S(\BC)$. 

The Betti rank is easily seen to be related to the Betti strata, for example 
\begin{equation}\label{EqS1BettiRank}
S^{\mathrm{Betti}}(1)\not= S \Longleftrightarrow r(\nu) = \dim S. 
\end{equation}

Another main result is the following formula for $r(\nu)$, which is often computable in practice. As explained in $\S$\ref{SubsectionUnivPeriodMap}, the VMHS $\BE_{\nu}$ on $S$ induces a period map $\varphi =\varphi_{\nu} \colon S\lra \Gamma \bs \sD$, with $\sD$ a mixed Mumford--Tate domain whose Mumford--Tate group we denote by $\mathbf{G}$ (so $\sD$ is a $\mathbf{G}(\BR)^+$-orbit and $\mathbf{G}$ is the generic Mumford--Tate group of the VMHS $\BE_{\nu}$).

Denote by $V$ the unipotent radical of $\mathbf{G}$; see Remark~\ref{RmkSubTorusFib} for its geometric meaning. Now the trivial upper bound on $r(\nu)$ can be easily improved to be
\begin{equation}\label{EqBettiRankTrivialUpperBound}
r(\nu) \le \min\{\dim\varphi(S), \frac{1}{2}\dim_{\BQ}V\};
\end{equation}
see \eqref{EqBettiRankTrivialUB}.

\begin{thm}[Theorem~\ref{ThmBettiRankFormulaMain}]\label{ThmBettiRankIntro} 
The Betti rank is given by
\begin{equation}\label{EqBettiRankFormulaIntro}
r(\nu)=\min _N \left (\dim \varphi_{/N}(S)+\frac 12 \dim_\BQ (V\cap N)\right),
\end{equation}
where $N$ runs over all normal subgroups of $\mathbf{G}$, and $\varphi_{/N}$ is the induced period map
\[
\varphi_{/N}: S\xrightarrow{\varphi} \Gamma\bs\sD \xrightarrow{[p_N]} \Gamma _{/N}\bs (\sD/N)
\]
with $[p_N]$ the quotient by $N$ defined in \eqref{EqQuotientMTDomain}.
\end{thm}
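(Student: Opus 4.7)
The proof will establish both inequalities in \eqref{EqBettiRankFormulaIntro} separately.

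\emph{Upper bound.} Fix a normal subgroup $N \trianglelefteq \mathbf{G}$. Normality forces $V\cap N$ to be stable under the $\mathbf{G}$-action on $V$, hence a sub-Hodge structure of the generic fiber of $\BV_\BZ$; the corresponding quotient family of intermediate Jacobians is $\bar\sJ:=\sJ(\BV_\BZ)/\exp(V\cap N)\to S$. The normal function $\nu$ pushes forward to an admissible normal function $\bar\nu$ on $\bar\sJ$ whose associated period map is precisely $\varphi_{/N}$. The trivial bound \eqref{EqBettiRankTrivialUpperBound} applied to $\bar\nu$ gives $r(\bar\nu)\le \dim\varphi_{/N}(S)$. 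The kernel of the surjection $T_{\nu(s)}\sJ_s \twoheadrightarrow T_{\bar\nu(s)}\bar\sJ_s$ is $(V\cap N)_\BR$, viewed as a complex subspace of $V_\BR$ via the Hodge filtration (a complex subspace because $V\cap N$ is $\mathbf{G}$-stable), of complex dimension $\tfrac12\dim_\BQ(V\cap N)$. Since $\nu_{\mathrm{Betti}}$ factors compatibly through $\bar\nu_{\mathrm{Betti}}$, one has $r(\nu)\le r(\bar\nu)+\tfrac12\dim_\BQ(V\cap N)\le \dim\varphi_{/N}(S)+\tfrac12\dim_\BQ(V\cap N)$.

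\emph{Lower bound.} To exhibit an $N$ achieving equality, pick a Hodge-generic $s_0\in S$ attaining the maximum in \eqref{EqBettiRankIntro}, so that $\MT(\BE_{\nu,s_0})=\mathbf{G}$. Set $W:=\nu_{\mathrm{Betti},s_0}(T_{s_0}S)\subseteq T_{\nu(s_0)}\sJ_{s_0}\cong V_\BR$, a complex subspace of complex dimension $r(\nu)$. Using the natural decomposition of $T_{\varphi(s_0)}\sD$ into the unipotent direction $V_\BR$ and a pure-part complement, let $W_{\mathrm{vert}}$ denote the $V_\BR$-projection of $d\varphi_{s_0}(\ker d\bar\varphi_{s_0})$, where $\bar\varphi\colon S \to \Gamma/V\bs \sD/V$ is the pure period map. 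Define $V'$ to be the smallest $\mathbf{G}$-invariant $\BQ$-subspace of $V$ whose real points (with their Hodge complex structure) contain $W_{\mathrm{vert}}$, and set $N:=V'$, automatically a normal subgroup of $\mathbf{G}$ with $V\cap N=V'$. By construction and Hodge-genericity of $s_0$, no smaller $\mathbf{G}$-invariant rational subspace works, so $\tfrac12\dim_\BQ(V\cap N)=\dim_\BC W_{\mathrm{vert}}$; one then verifies $\dim\varphi_{/N}(S)=r(\nu)-\dim_\BC W_{\mathrm{vert}}$, producing the required equality.

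\emph{Main obstacle.} The substantive point is the equality $\dim\varphi_{/N}(S)\ge r(\nu)-\dim_\BC W_{\mathrm{vert}}$: after killing $V\cap N$ in the unipotent fiber direction, the quotient period image must not collapse further than expected. I expect this to rest on a mixed Ax--Schanuel (or ``bigness'') theorem for period maps in the spirit of the rigidity results underlying \S\ref{SubsectionUnivPeriodMap}: any unexpected collapse of $\varphi_{/N}(S)$ would produce a strictly larger normal $N'\supsetneq N$ with $V\cap N'=V\cap N$ absorbing the drop, contradicting the minimality of $V'$ together with Hodge-genericity of $s_0$. The rationality of $V'$ (and hence $N$) is furnished by the weak Mumford--Tate theorem.
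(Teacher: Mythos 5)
Your upper bound is in substance the paper's ``$\le$'' direction, but one step is stated incorrectly: the period map attached to the pushed-forward normal function $\bar\nu$ (quotient of the fibers by $V\cap N$) is $\varphi_{/(V\cap N)}$, not $\varphi_{/N}$, and $\dim\varphi_{/(V\cap N)}(S)$ can exceed $\dim\varphi_{/N}(S)$, so the trivial bound applied to $\bar\nu$ does not directly give what you need. The fix is the observation that $[N,V]\subseteq V\cap N$ (normality of $N$ and of $V$), so the reductive part of $N$ acts trivially on $V/(V\cap N)$ and the unipotent-fiber coordinate of $\sD/(V\cap N)$ descends through $\sD/N$; then the rank of $\mathrm{d}\wt{\varphi}_V$, which computes $r(\nu)$ by \eqref{EqBettiMapBettiFoliation2}, is bounded by $\dim\varphi_{/N}(S)+\frac{1}{2}\dim_\BQ(V\cap N)$. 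This is a repairable imprecision.

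The lower bound is where the genuine gap lies: your candidate $N=V'$ is always unipotent and is built from tangent data at a single point, and neither feature survives scrutiny. First, the claimed equality $\frac{1}{2}\dim_\BQ V'=\dim_\BC W_{\mathrm{vert}}$ does not follow from minimality plus Hodge genericity: the smallest rational $\mathbf{G}$-invariant subspace containing a given complex subspace of $V(\BR)$ can be strictly larger, and ruling this out is a functional-transcendence statement about positive-dimensional analytic loci; Ax--Schanuel gives nothing when fed only a tangent space at one point. Second, and decisively, the minimizing $N$ in \eqref{EqBettiRankFormulaIntro} need not be unipotent and the rank drop need not be visible in $\ker d\bar\varphi_{s_0}$ at all. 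Concretely, take $\BV=\BV_1\oplus H_1(E)$ with $\BV_1$ a nonconstant VHS whose period map is immersive, $E$ a fixed elliptic curve, and $\nu=(0,P)$ with $P\in E(\BC)$ non-torsion: then $r(\nu)=0$ and $V=H_1(E)_\BQ\neq 0$, but $\ker d\bar\varphi_{s_0}=0$, so your recipe outputs $N=\{1\}$ and predicts $\dim\varphi(S)=0$, which is false; the minimum is attained only at an $N$ with nontrivial reductive part (the factor acting on $\BV_1$), and note that replacing any $N$ by $V\cap N$ can only increase $\dim\varphi_{/N}(S)$, so there is no reduction to unipotent $N$. What your ``main obstacle'' paragraph defers is precisely the content of the paper's argument: with $t=\dim S-r(\nu)$ the locus $S^{\mathrm{Betti}}(t)$ is dense in $S$; mixed Ax--Schanuel (Theorem~\ref{ThmAS}) applied to the positive-dimensional intersections of $\wt{\varphi}(\wt{S})$ with the sets $\{a\}\times\sD_0$ covers it by weakly optimal subvarieties (Proposition~\ref{PropBettiRankNonMaxAndDegLocus}); Geometric Zilber--Pink (Proposition~\ref{PropFinitenessBogomolov}) gives finitely many pairs $(\sD_j,N_j)$, and the o-minimal definability of the period map together with definable Chow and the algebraicity of $\varphi(S)$ make the loci $E_j$ and $S_{\ge r}$ Zariski closed, so after taking Zariski closures $S$ itself equals one member; Hodge genericity then forces $\MT(\sD_j)=\mathbf{G}$, and $N=N_j\lhd\mathbf{G}$ has fibers of $\varphi_{/N}$ of dimension at least $\frac{1}{2}\dim_\BQ(V\cap N)+t$, which yields ``$\ge$''. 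These finiteness and definability inputs (Baldi--Urbanik, Bakker--Brunebarbe--Klingler--Tsimerman, Bakker--Brunebarbe--Tsimerman) cannot be replaced by a pointwise rationality argument.
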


We will give two applications of Theorem~\ref{ThmBettiRankIntro}  where the trivial upper bound \eqref{EqBettiRankTrivialUpperBound} on $r(\nu)$ is attained: 
Theorem~\ref{ThmBignessGeom} when $(\BV_{\BZ},\sF^{\bullet}) \rightarrow S$ is irreducible, and Theorem~\ref{ThmBignessGeom2} when the {\em algebraic monodromy group of $(\BV_{\BZ},\sF^{\bullet}) \rightarrow S$ is simple} (or when the Mumford--Tate group of $(\BV_{\BZ},\sF^{\bullet}) \rightarrow S$ is simple). 

When applied to the Gross--Schoen or Ceresa normal function over the whole $\sM_g$ ($g\ge 3$), we get $r(\nu) = 3g-3 = \dim \sM_g$ as in \cite[$\mathsection$7]{GZ}. In this particular case, Hain \cite{Ha24} gave a different proof.

\begin{rmk}\label{RmkTorsion}
In the course of proof, we also obtain geometric results on torsion loci. See Corollary~\ref{CorSF1Torsion} and Remark~\ref{RmkSF1Torsion}. Some of these results are also independently proved by Kerr--Tayou \cite{KerrTayou} with a different method.
\end{rmk}
When applied to arithmetic families, the results mentioned in Remark~\ref{RmkTorsion} apply to {\it non-$\IQbar$ points}. To study torsion $\IQbar$-points, one needs to study the {\it degeneracy loci} as indicated by \cite{GH}. We prove results about the degeneracy loci in $\mathsection$\ref{SectionDegLoci}.


\subsection{Organization of the paper}
We will give a quick summary on classifying spaces of mixed Hodge structures and Mumford--Tate domains in $\S$\ref{SectionSummaryClassifyingSpaceMTDomain}. Then in $\S$\ref{SectionPeriodMap} we recall the definition of the period map associated with a normal function  and explain how to see the Betti foliation in this language.  With these preparations both Theorem~\ref{ThmZarClosedIntro} and Theorem~\ref{ThmBettiRankIntro} are proved in $\S$\ref{SectionBettiRank}. Then we give two applications of this formula in geometric situations in $\S$\ref{SectionGeomApp}, with a brief discussion on their applications to the  torsion loci.

In $\S$\ref{SectionDegLoci}, we discuss the degeneracy loci. In $\S$\ref{SectionFuture}, we ask some conjectures and questions about the Betti strata, which are closely related to our program of studying the positivity of Beilinson--Bloch heights.

\subsection*{Acknowledgements}
The material presented in this paper is part of the authors' program to study the positivity of the Beilinson--Bloch height, and  was done when ZG visited SZ at Princeton University in January, February, and May 2024. ZG would like to thank Princeton University for its hospitality. 


Ziyang Gao received funding from the European Research Council (ERC) under the European Union’s Horizon 2020 research and innovation program (grant agreement n$^{\circ}$ 945714) during this work. Shouwu Zhang is supported by an NSF award, DMS-2101787.

\section{Quick summary on classifying spaces and Mumford--Tate domains}\label{SectionSummaryClassifyingSpaceMTDomain}
In this section, we give a  review of classifying spaces of mixed Hodge structures and Mumford--Tate domains.

Let $V_0$ be a finite-dimensional $\BQ$-vector space. Let $E$ be a $\BQ$-vector space fitting into a short exact sequence $0 \rightarrow V_0 \rightarrow E \rightarrow \BQ \rightarrow 0$. This short exact sequence must split, and we fix a splitting 
 $E = V_0 \oplus \BQ$.

\subsection{Classifying spaces}\label{SubsectionClassifyingSpace}
\subsubsection{Pure Hodge structures}  Consider the polarized Hodge data on $V_0$: a non-degenerate skew pairing $Q_{-1} \colon V_0 \otimes V_0 \rightarrow \BQ(1)$, and a partition $\{h^{p,q}_{V_0}\}_{p,q\in \BZ}$ of $\dim V_{0,\BC}$ into non-negative integers with $p+q=-1$ such that $h^{p,q}_{V_0} = h^{q,p}_{V_0}$. Then there exists a {\em classifying space} $\sM_0$ parametrizing $\BQ$-Hodge structures on $V_0$ of weight $-1$ with a polarization by $Q_{-1}$ such that the $(p,q)$-constituent of $V_{0,\BC}$ has complex dimension $h^{p,q}$. Moreover, for the $\BQ$-group  $\mathbf{G}_0^{\sM} := \mathrm{Aut}( V_0 ,Q_{-1})$, the associated real Lie group $\mathbf{G}_0^{\sM}(\BR)^+$ acts transitively on $\sM_0$, \textit{i.e.} $
\sM_0 = \mathbf{G}_0^{\sM}(\BR)^+ x_0$ 
for any point $x_0 \in \sM_0$. This makes $\sM_0$ into a semi-algebraic open subset of a flag variety $\sM_0^\vee$, which is a suitable $\mathbf{G}_0^{\sM}(\BC)$-orbit, and hence endows $\sM_0$ with a complex structure and a semi-algebraic structure.

We can be more explicit on the action of $\mathbf{G}_0^{\sM}(\BR)^+$ on $\sM_0$. For each $x_0 \in \sM_0$, we have a Hodge decomposition and a Hodge filtration $F^{\bullet}_{x_0}$
\begin{equation}\label{EqHodgeDecomPure}
V_{\BC} = \bigoplus\nolimits_{p+q=-1} (V_{x_0})^{p,q}, \qquad F^p_{x_0} V_{\BC} = \bigoplus\nolimits_{p'\ge p} (V_{x_0})^{p',q'}
\end{equation}
with $(V_{x_0})^{q,p} = \overline{(V_{x_0})^{p,q}}$. 
The inclusion $\sM_0 \subseteq \sM_0^\vee$ is given by $x_0 \mapsto F^{\bullet}_{x_0}$. 

For the Deligne torus $\BS = \mathrm{Res}_{\BC/\BR}\BG_{\mathrm{m},\BC}$, 
the bi-grading decomposition above  defines a morphism $h_{x_0} \colon \BS \rightarrow \mathrm{GL}(V_{\BR})$, with $(V_{x_0})^{p,q}$ the eigenspace of the character $z \mapsto z^{-p}\bar{z}^{-q}$ of $\BS$. It is known that $h_{x_0}(\BS) < \mathbf{G}_0^{\sM}(\BR)$ for all $x_0 \in \sM_0$. Hence we have  a $\mathbf{G}_0^{\sM}(\BR)^+$-equivariant map, which is known to be injective,
\begin{equation}
\sM_0  \rightarrow \Hom(\BS, \mathbf{G}^{\sM}_{0,\BR}), \quad  x_0 \mapsto  h_{x_0} 
\end{equation}
with the action of $\mathbf{G}_0^{\sM}(\BR)^+$ on $\Hom(\BS, \mathbf{G}^{\sM}_{0,\BR})$ given by conjugation. So we will view $\sM_0$ as a subset of $\Hom(\BS, \mathbf{G}^{\sM}_{0,\BR})$.

For each $x_0 \in \sM_0$, denote by $\mathrm{MT}(x_0)$ the Mumford--Tate subgroup of the Hodge structure parametrized by $x_0$. It is a reductive subgroup of $\mathbf{G}_0^{\sM}$.

\subsubsection{Mixed Hodge structures} 
Next, we turn to mixed Hodge structures of weight of $-1$ and $0$. 
Fix the following data on $E = V_0 \oplus \BQ$: the weight filtration 
$$W_\bullet := (0 = W_{-2}E \subseteq W_{-1}E = V_0 \subseteq W_0 E = E);$$
the partition $\{h^{p,q}\}_{p,q\in \BZ}$ of $\dim E_{\BC}$ into non-negative integers, with $h^{p,q} = h^{p,q}_{V_0}$ for $p+q=-1$ and $h^{0,0} = 1$ and $h^{p,q} = 0$ otherwise; a non-degenerate skew pairing $Q_{-1} \colon V_0 \otimes V_0 \rightarrow \BQ(1)$.

There exists the \textit{classifying space} $\sM$ parametrizing $\BQ$-mixed Hodge structures $(E, W_\bullet, F^\bullet)$ of weight $-1$ and $0$ such that: 
\begin{enumerate}
\item[(a)] the $(p,q)$-constituent $ \mathrm{Gr}^p_F \mathrm{Gr}^W_{p+q} E_\BC$ has complex dimension $h^{p,q}$;
\item[(b)] 
$\mathrm{Gr}^W_{-1} E = V_0$ is polarized by $Q_{-1}$ (so $E$ is {\em graded-polarized} because $\mathrm{Gr}^W_0 E = \BQ$ is polarized by $\BQ \otimes \BQ \rightarrow \BQ$, $a\otimes b\mapsto ab$).
\end{enumerate}
See \cite[below (3.7) to the Remark below Lem.~3.9]{Pearlstein2000}.

In our case, we need a better understanding of the structure of $\sM$. 
Consider the $\BQ$-group $\mathbf{G}^{\sM} :=\Aut (E, Q_{-1}, W)$. Then we have
\begin{equation}\label{EqMTGroupClassSpMixed}
\mathbf{G}^{\sM} = V_0 \rtimes \mathrm{Aut}(V_0,Q_{-1}) = V_0 \rtimes \mathbf{G}_0^{\sM}
\end{equation}
because each  $v\in V_0$ can be seen as an element of $\GL(E)$ by sending $(w, a)\in V_0\oplus \BQ$ to $(w+av, a)$. 
The map $x \mapsto F_x^{\bullet}$ realizes $\sM$ as a semi-algebraic open subset of a suitable flag variety $\sM^\vee$, which is an orbit under $\mathbf{G}^{\sM}(\BC)$.  Moreover, the mixed Hodge structure parametrized by each $x \in \sM$ is split over $\BR$;\footnote{Here it is crucial that the mixed Hodge structure has two adjacent weights!} more precisely, there exists a splitting $E_{\BR} = V_{0,\BR} \oplus \BR$ of Hodge structures given by the inverse of $E_{\BR} \cap F_x^0 E_{\BC} \iso V_{0,\BR}$. So (see for example  \cite[last~Remark~of~$\mathsection$3]{Pearlstein2000})
\begin{equation}\label{EqClassifyingSpaceMixed}
\sM = \mathbf{G}^{\sM}(\BR)^+ x.
\end{equation}
This makes $\sM$ into a semi-algebraic open subset of a flag variety $\sM^\vee$ and hence endows $\sM$ with a complex structure and a semi-algebraic structure.

For each $x \in \sM$, denote by $\mathrm{MT}(x)$ the Mumford--Tate subgroup of the Hodge structure parametrized by $x$. It is a subgroup of $\mathbf{G}^{\sM}$ which is not necessarily reductive.

\subsubsection{Fibered structure}
For each $x \in \sM$, we have an induced Hodge structure on $V_0$ which we denote by $p(x)$. This induces a projection
\begin{equation}\label{EqQuotClassfyingSpaceToPure}
p \colon \sM \rightarrow \sM_0,
\end{equation}
which is compatible with the quotient $\mathbf{G}^{\sM} \rightarrow \mathbf{G}_0^{\sM} = \mathbf{G}^{\sM}/V_0$. 

Moreover, \eqref{EqMTGroupClassSpMixed} induces a semi-algebraic isomorphism
\begin{equation}\label{EqClassfyingSpaceDecomposition}
(q,p) \colon \sM \iso V_0(\BR)\times \sM_0.
\end{equation}
The complex structure on $\sM$ is given by making $\sM$ into a vector bundle over $\sM_0$ with fibers $V_{0,\BR} \iso V_{0,\BC}/F^0_xV_{0,\BC}$. For each $a \in V_0(\BR)$, the submanifold $(q, p)^{-1}(\{a\}\times \sM_0)$ is a semi-algebraic and  complex submanifold of $\sM$. More precisely, let us summarize into:
\begin{prop}\label{PropStruMT}
We have:
\begin{enumerate}
\item[(i)] The semi-algebraic structure on $\sM \subseteq \Hom(\BS, \mathbf{G}^{\sM}_{\BR})$ is given by 
\begin{equation}\label{EqBettiMTDomainSemiAlg}
V_0(\BR) \times \sM_0 \xrightarrow{\sim} \sM, \quad (v, x_0) \mapsto \mathrm{Int}(v)\circ h_{x_0}.
\end{equation}
\item[(ii)] The complex structure on $\sM$ is given by  $\sM \iso (V_0(\BC) \times \sD_0)/F^0(V_0 \times \sD_0)$.
\item[(iii)]
These two structures are related by the natural bijection
\begin{equation}\label{EqBettiMTDomain}
V_0(\BR) \times \sM_0\subseteq V_0(\BC) \times \sM_0 \longrightarrow   (V_0(\BC) \times \sD_0)/F^0(V_0 \times \sD_0).
\end{equation}
\end{enumerate}
\end{prop}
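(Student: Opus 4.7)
The proof is essentially an unpacking of definitions, organized around the fact that mixed Hodge structures whose weight filtration has only two adjacent weights (here $-1$ and $0$) are automatically split over $\BR$, as emphasized in the footnote. I would prove the three parts in order, with (iii) being formal given (i) and (ii).

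For (i), I start from the transitive action \eqref{EqClassifyingSpaceMixed} of $\mathbf{G}^{\sM}(\BR)^+$ on $\sM$. Using the semidirect product decomposition \eqref{EqMTGroupClassSpMixed}, every $g \in \mathbf{G}^{\sM}(\BR)^+$ is written uniquely as $g = v \cdot g_0$ with $v \in V_0(\BR)$ and $g_0 \in \mathbf{G}^{\sM}_0(\BR)^+$. I would fix a base point $x_* \in \sM$ for which the induced morphism $h_{x_*} \colon \BS \rightarrow \mathbf{G}^{\sM}_{\BR}$ factors through the Levi $\mathbf{G}^{\sM}_{0,\BR}$; this is possible precisely because $x_*$ is $\BR$-split, so the Deligne torus lifts into the reductive part. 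Then $\mathrm{Int}(g_0) \circ h_{x_*} = h_{x_0}$ for $x_0 = g_0 \cdot p(x_*) \in \sM_0$, and conjugating further by $v$ gives $\mathrm{Int}(v) \circ h_{x_0}$. Bijectivity of $(v, x_0) \mapsto \mathrm{Int}(v)\circ h_{x_0}$ follows because $p$ recovers $x_0$ uniquely and $V_0(\BR)$ acts freely on the fiber $p^{-1}(x_0)$. The map is semi-algebraic since conjugation is.

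For (ii), the complex structure on $\sM$ is defined by realizing it as an open subset of the flag variety $\sM^{\vee}$ via $x \mapsto F^{\bullet}_x E_{\BC}$. On the other hand, Carlson's classification of extensions parametrizes graded-polarized mixed Hodge structures $0 \to V_0 \to E \to \BQ \to 0$ with pure part fixed by $x_0$ via the complex torus $V_{0,\BC}/(F^0_{x_0} V_{0,\BC} + V_{0,\BZ})$; since $\sM$ parametrizes such extensions without imposing an integral structure on the extension class, each fiber $p^{-1}(x_0)$ is identified with $V_{0,\BC}/F^0_{x_0} V_{0,\BC}$, and globalizing over $\sM_0$ gives the claimed holomorphic vector bundle structure. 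Compatibility with the complex structure inherited from $\sM^{\vee}$ then follows from the explicit formula for $F^{\bullet}_x E_{\BC}$ in terms of the extension class.

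For (iii), each fiber of the map in \eqref{EqBettiMTDomain} is the composition $V_0(\BR) \hookrightarrow V_{0,\BC} \twoheadrightarrow V_{0,\BC}/F^0_{x_0} V_{0,\BC}$, which is a real-linear isomorphism because any pure Hodge structure of weight $-1$ satisfies $V_{0,\BC} = F^0_{x_0} V_{0,\BC} \oplus \overline{F^0_{x_0} V_{0,\BC}}$ and $V_{0,\BR}$ maps isomorphically to the quotient. The main subtlety, and the one place where something genuinely has to be checked rather than unpacked, is verifying that the coordinate $v \in V_0(\BR)$ from (i) corresponds under this identification to Carlson's extension class from (ii). I would do this by evaluating both on the explicit $\BR$-splitting $E_{\BR} = V_{0,\BR} \oplus \BR$ at the base point: the action $\mathrm{Int}(v) \circ h_{x_*}$ twists this splitting in a way that, projected onto $V_{0,\BC}/F^0_{x_0} V_{0,\BC}$, matches the defect between the real splitting and a holomorphic splitting, which is Carlson's invariant.
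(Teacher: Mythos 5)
Your proposal is correct, but it reaches the result by a somewhat different route than the paper. The paper works entirely inside $\Hom(\BS_{\BC},\mathbf{G}^{\sM}_{\BC})$: it introduces the auxiliary orbit $\sX:=\mathbf{G}^{\sM}(\BR)^+V_0(\BC)\cdot h_x$, identifies $\sX\cong V_0(\BC)\times\sM_0$ using the Levi section of $\mathbf{G}^{\sM}=V_0\rtimes\mathbf{G}_0^{\sM}$, and maps $\sX\to\sM$ by $gh_xg^{-1}\mapsto g\cdot x$; Pink's results are cited to see that the fibers of $\sX\to\sM_0$ are $V_0(\BC)$-torsors and the fibers of $\sX\to\sM$ are $F^0_{x_0}V_{0,\BC}$-torsors, and the agreement of the resulting complex structure with the one inherited from $\sM^\vee$ is quoted from Pearlstein and GKAS. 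With that single construction, part (iii) is immediate: the map \eqref{EqBettiMTDomain} is just $\sX_{\BR}\subseteq\sX\to\sM$. You instead prove (i) directly from the transitive action and the semidirect product (your freeness claim for $V_0(\BR)$ on the fibers deserves one line: an element of $V_{0,\BR}$ centralizing $h_{x_0}(\BS)$ would be a $(0,0)$-vector in a pure structure of weight $-1$, hence zero --- this is exactly what the paper outsources to Pink), and you prove (ii) via Carlson's extension-class description of the fibers of $p$, reading off holomorphy from the explicit formula $F^0_xE_{\BC}=F^0_{x_0}V_{0,\BC}\oplus\BC\,(e+v)$ where the paper cites Pearlstein/GKAS. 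The price of building (i) and (ii) from two different models is precisely the compatibility you flag in (iii), and your proposed verification does go through: conjugating the split point by $v\in V_0(\BR)$ replaces the holomorphic lift $e$ of $1\in\mathrm{Gr}^W_0$ by $e+v$, so the defect between holomorphic and rational lifts is $v$ modulo $F^0_{x_0}V_{0,\BC}$, which is Carlson's class. In short, your argument is more self-contained and Hodge-theoretic, at the cost of one extra (easy) computation that the paper's unified construction via $\sX$ renders automatic.
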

\begin{proof}
Fix $x \in \sM$ and let $h_x \colon \BS \rightarrow \mathbf{G}^{\sM}_{\BR}$ be the corresponding homomorphism.

By \cite[1.8(a)]{PinkThesis}, for any $x \in \sM$ we have $\mathrm{Stab}_{\mathbf{G}(\BR)^+}(x) \cong \mathrm{Stab}_{\mathbf{G}_0(\BR)^+}(\pi(x))$ for $\pi \colon \sM \rightarrow \sM_0$. Hence part (i) is a direct consequence of \eqref{EqMTGroupClassSpMixed} that $\mathbf{G}^{\sM} = V_0\rtimes \mathbf{G}_0^{\sM}$. 

 Set $\sX := \mathbf{G}^{\sM}(\BR)^+V_0(\BC)\cdot h_x \subseteq \Hom(\BS_{\BC},\mathbf{G}^{\sM}_{\BC})$, where the action is given by conjugation.  
The quotient $p \colon \mathbf{G}^{\sM} \rightarrow  \mathbf{G}_0^{\sM}$ induces a natural surjective map $\sX \rightarrow \sM_0$, and by \cite[1.8(a)]{PinkThesis} each fiber of this map is a $V_0(\BC)$-torsor.

The natural morphism of algebraic groups $\mathbf{G}_0^{\sM} \cong \{0\}\times \mathbf{G}_0^{\sM} < V_0\rtimes \mathbf{G}_0^{\sM} = \mathbf{G}^{\sM}$  induces a global section of $\sX \rightarrow \sM_0$, and hence an isomorphism $\sX \cong V_0(\BC) \times \sM_0$ over $\sM_0$. 

Consider the following surjective equivariant  map 
 \begin{equation}\label{EqHodgeMT}
 \varphi \colon \sX \rightarrow \sD, \qquad g h_x g^{-1} \mapsto g\cdot x.
 \end{equation}
By \cite[1.8(b)]{PinkThesis}, for each $x \in \sD$, the fiber $\varphi^{-1}(x)$ is  a principle homogeneous space under $F^0_{x_0}V_{0, \BC}$ with $x_0=p(x)$. Hence $\varphi$ gives a bijection $(V_0(\BC) \times \sM_0)/F^0(V_0(\BC) \times \sM_0) \rightarrow \sM$ over $\sM_0$. Moreover, the complex structure of $\sM$ is precisely given by this bijection; see \cite[Proof of Prop.~2.6 in Appendix~A]{GKAS} which is a consequence of \cite[Thm.~3.13]{Pearlstein2000}. This establishes (ii).

For (iii), let $\sX_{\BR} := \mathbf{G}^{\sM}(\BR)^+\cdot h_x \subseteq \sX$. Then \eqref{EqBettiMTDomain} is $\sX_{\BR} \subseteq \sX \xrightarrow{\varphi}\sM$. We are done.
\end{proof}


\subsection{Mumford--Tate domains and quotients}\label{SubsectionMTDomain}
\begin{defn}\label{DefnMTDomain}
 A subset $\sD$ of the classifying space $\sM$ is called a {\em (mixed) Mumford--Tate domain} if there exists an element $x \in \sD$ such that $\sD = \mathbf{G}(\BR)^+ x$, where $\mathbf{G} = \mathrm{MT}(x)$.
\end{defn}
Let $\sD$ be a Mumford--Tate domain in $\sM$.\footnote{$\sM$ is a Mumford--Tate domain in itself with $\mathrm{MT}(\sM) = \mathbf{G}^{\sM}$.} Then $\sD$ is both complex analytic irreducible and semi-algebraic in $\sM$.
The group $\mathbf{G}$ in the definition above is called the {\em generic Mumford--Tate group of $\sD$} and is denoted by $\mathrm{MT}(\sD)$. We have $\mathrm{MT}(x) < \mathbf{G}$ for all $x \in \sD$. 

It is known that $\mathbf{G} < \mathbf{G}^{\sM}$, and the unipotent radical of $\mathbf{G}$ equals $V:= V_0 \cap \mathbf{G}$ (by reason of weight). 
Let $\mathbf{G}_0 := \mathbf{G}/V$ be the reductive part. Set $\sD_0 := p(\sD) \subseteq \sM_0$ for the map $p$ defined in \eqref{EqQuotClassfyingSpaceToPure}. Then $\sD_0$ is a $\mathbf{G}_0(\BR)^+$-orbit and is in fact a (pure) Mumford--Tate domain in the classifying space $\sM_0$.

\begin{defn}\label{DefnAlgMTDomain}
A subset of $\sD$ is said to be {\em irreducible algebraic} if it is both complex analytic irreducible and semi-algebraic. 
\end{defn}
In view of \cite[Lem.~B.1 and its proof]{KlinglerThe-Hyperbolic-}, a subset of $\sD$ is irreducible algebraic if and only if it is a component of $U\cap \sD$ with $U$ an algebraic subvariety of $\sM^\vee$.

We also have a similar description of the fibered structure of $p \colon \sD \rightarrow \sD_0$ as in \eqref{EqClassfyingSpaceDecomposition}. Each $x_0 \in \sD_0 \subseteq \sM_0$ endows $V_0$ with a Hodge structure of weight $-1$ whose Mumford--Tate group is a subgroup of $\mathbf{G}_0$, and $V$ is a sub-Hodge structure of $V_0$ because $V$ is a $\mathbf{G}_0$-submodule of $V_0$ and $\mathrm{MT}(x_0) < \mathbf{G}_0$. 
The following lemma is well-known to experts, but we cannot find any reference. We give a proof here  to make our paper more complete. See also \cite[Lem.~6.2]{GZ}.
\begin{lemma}\label{LemmaStrucMTDomain}
Under the isomorphism \eqref{EqClassfyingSpaceDecomposition}, the image of the subset $\sD \subseteq \sM$ is $(V(\BR)+v_0) \times \sD_0$ for some $v_0 \in V_0(\BQ)$.
\end{lemma}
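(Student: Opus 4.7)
The plan is to transfer the orbit description $\sD = \mathbf{G}(\BR)^+ \cdot x^{(0)}$ into the explicit semialgebraic coordinates $(v, x_0) \in V_0(\BR) \times \sM_0$ supplied by Proposition~\ref{PropStruMT}, and then to verify that the $V_0(\BR)$-component stays in a single $V(\BR)$-coset which, moreover, admits a rational representative. The starting algebraic input is a Levi section $s \colon \mathbf{G}_0 \hookrightarrow \mathbf{G}$, which exists over $\BQ$ because we are in characteristic zero; writing $s(g_0) = (c(g_0), g_0)$ inside $\mathbf{G}^{\sM} = V_0 \rtimes \mathbf{G}_0^{\sM}$ produces a $\BQ$-cocycle $c \colon \mathbf{G}_0 \to V_0$, and a general element of $\mathbf{G}(\BR)^+$ takes the form $(v + c(g_0), g_0)$ with $v \in V(\BR)$ and $g_0 \in \mathbf{G}_0(\BR)^+$. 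Using \eqref{EqBettiMTDomainSemiAlg}, a short computation shows that its conjugation action on $V_0(\BR) \times \sM_0$ reads
\[
(v + c(g_0), g_0) \cdot (u, x_0) \;=\; (v + c(g_0) + g_0 u, \; g_0 x_0).
\]

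Next I would pick a Hodge-generic point $x^{(0)} = (v', x_0') \in \sD$, for which $\mathrm{MT}(x^{(0)}) = \mathbf{G}$ and hence $\mathrm{MT}(x_0') = \mathbf{G}_0$. A direct computation of $\mathrm{Int}(v') \circ h_{x_0'}$ inside $V_0 \rtimes \mathbf{G}_0^{\sM}$ shows that the condition $h_{x^{(0)}}(\BS) \subset \mathbf{G}_{\BR}$ translates to $\bar c(g_0) \equiv (1 - g_0)\bar v' \pmod{V}$ for every $g_0 \in h_{x_0'}(\BS)$. Because $h_{x_0'}(\BS)$ is Zariski-dense in $\mathrm{MT}(x_0') = \mathbf{G}_0$ and the identity is a $\BQ$-algebraic condition on $g_0$, it extends to all $g_0 \in \mathbf{G}_0$. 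Substituting back, the $V_0(\BR)$-coordinate of the orbit $\mathbf{G}(\BR)^+ \cdot x^{(0)}$ collapses onto the single coset $v' + V(\BR)$, while the $\sM_0$-coordinate sweeps out $\sD_0$, yielding
\[
\sD \;=\; (v' + V(\BR)) \times \sD_0
\]
as subsets of $V_0(\BR) \times \sM_0$.

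For the rationality step, I would use the Hodge-theoretic input that $V_0/V$ inherits from $V_0$ a polarized pure Hodge structure of weight $-1$ with generic Mumford--Tate group $\mathbf{G}_0$: any $\mathbf{G}_0$-fixed vector would be of Hodge type $(0,0)$, which is incompatible with weight $-1$, so $(V_0/V)^{\mathbf{G}_0} = 0$. Consequently the $\BQ$-linear system $(1 - g_0)\bar v = \bar c(g_0)$ ($g_0 \in \mathbf{G}_0(\BQ)$) has at most one solution in $V_0/V$; since a real solution $\bar v'$ exists and all the data are $\BQ$-defined, that unique solution is automatically rational. Lifting it to $v_0 \in V_0(\BQ)$ with $\bar v_0 = \bar v'$ gives $v_0 + V(\BR) = v' + V(\BR)$, as desired.

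The main obstacle will be the Zariski-density step in the second paragraph: one has to promote the Hodge-theoretic condition at a single Hodge-generic point into a cocycle identity valid on all of $\mathbf{G}_0$, via the defining property of the Mumford--Tate group. Once this is in place, the weight $-1$ hypothesis makes the vanishing of $(V_0/V)^{\mathbf{G}_0}$ and hence the rationality of $v_0$ almost immediate.
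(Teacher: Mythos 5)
Your setup is fine up to a point: the $\BQ$-Levi section $s(g_0)=(c(g_0),g_0)$, the conjugation formula on $V_0(\BR)\times\sM_0$, and the translation of $h_{x^{(0)}}(\BS)\subset\mathbf{G}_{\BR}$ into the identity $\bar c(g_0)=(1-g_0)\bar v'$ in $(V_0/V)(\BR)$ for $g_0$ in the image of $h_{x_0'}$ are all correct, as is the deduction of $\sD=(v'+V(\BR))\times\sD_0$ and of the rationality of $v_0$ \emph{once the identity is known for all $g_0\in\mathbf{G}_0$}. The gap is exactly the step you flag as the main obstacle: the extension by ``Zariski density''. The Mumford--Tate group is the smallest \emph{$\BQ$-algebraic} subgroup whose real points contain $h_{x_0'}(\BS(\BR))$; the image of $h_{x_0'}$ has Zariski closure over $\BR$ a torus of dimension at most $2$, so it is in general very far from Zariski dense in $\mathbf{G}_{0,\BR}$ (already for a generic abelian variety, where $\mathbf{G}_0=\GSp_{2g}$). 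Density in the Mumford--Tate sense therefore only propagates conditions cut out by \emph{$\BQ$-algebraic} subgroups, and your locus $H=\{g_0:\bar c(g_0)=(1-g_0)\bar v'\}$ --- which is indeed an algebraic subgroup of $\mathbf{G}_0$ by the cocycle identity --- is only defined over $\BR$, because it involves $\bar v'\in(V_0/V)(\BR)$, whose rationality you establish only afterwards and only by using the very identity in question. Equivalently, inside $(V_0/V)\rtimes\mathbf{G}_0$ you are comparing two Levi subgroups, the $\BQ$-rational graph of $\bar c$ and the $\BR$-rational $\mathrm{Int}(\bar v')(\mathbf{G}_0)$; knowing both contain the image of $h_{x^{(0)}}$ does not force them to coincide unless the latter is known to be defined over $\BQ$, so the argument as written is circular.

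The repair is the paper's argument, which needs no density statement at all: since $\mathbf{G}_0=\MT(p(x^{(0)}))$ is reductive and we are in characteristic $0$, $H^1(\mathbf{G}_0,V_0)=0$, so the $\BQ$-cocycle $c$ is a coboundary, $c(g_0)=v_0-g_0v_0$ with $v_0\in V_0(\BQ)$ (unique up to $\mathbf{G}_0$-invariant vectors). This single cohomological input yields the precise Levi decomposition $\mathbf{G}=\mathrm{Int}(v_0)(V\rtimes\mathbf{G}_0)$, hence at once the collapse of the $V_0(\BR)$-coordinate of the orbit $\mathbf{G}(\BR)^+x^{(0)}$ onto the coset $V(\BR)+v_0$ \emph{and} the rationality of $v_0$; your closing observation that $(V_0/V)^{\mathbf{G}_0}=0$ in weight $-1$ is correct but then unnecessary, since existence (not uniqueness) of a rational $v_0$ is all the lemma requires. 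If you want to keep your Hodge-theoretic flavor, you must still invoke $H^1(\mathbf{G}_0,V_0/V)=0$ (or argue separately that $\mathrm{Int}(\bar v')(\mathbf{G}_0)$ is a $\BQ$-subgroup) before the minimality of the Mumford--Tate group can be applied.
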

We thus obtain a natural isomorphism $\sD \cong V(\BR) \times \sD_0$. 
The following corollary is then an easy consequence of the last sentence of $\S$\ref{SubsectionClassifyingSpace}.
\begin{cor}\label{CorZariskiClosureBettiFiber}
Let $\wt{Z}_0 \subseteq \sD_0$ be an irreducible algebraic subset. Then for any $a \in V(\BR)$, the subset $\{a\} \times \wt{Z}_0  \subseteq V(\BR) \times \sD_0\cong \sD$ is  irreducible algebraic.
\end{cor}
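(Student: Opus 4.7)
The plan is to realize $\{a\}\times \wt{Z}_0$ as the image of $\wt{Z}_0$ under an embedding $\sigma_a \colon \sD_0 \to \sD$ that is simultaneously holomorphic and semi-algebraic, and then conclude by transport of structure.

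First, I would invoke Lemma~\ref{LemmaStrucMTDomain} to fix the identification $\sD \cong V(\BR) \times \sD_0$ (after translating by $v_0$), and for a given $a \in V(\BR)$ define $\sigma_a \colon \sD_0 \to \sD$ by $x_0 \mapsto (a, x_0)$. Under this identification $\sigma_a$ is a section of $p \colon \sD \to \sD_0$, so it is automatically injective, and $\sigma_a(\wt{Z}_0) = \{a\}\times \wt{Z}_0$. Thus it suffices to show that $\sigma_a$ is a holomorphic semi-algebraic embedding.

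Next, I would verify that $\sigma_a$ has both structures, which is essentially the content of the last sentence of $\S$\ref{SubsectionClassifyingSpace} (together with Lemma~\ref{LemmaStrucMTDomain}), now applied to $\sD$ in place of $\sM$. For semi-algebraicity, the map $V(\BR)\times \sD_0 \to \sD$ of Proposition~\ref{PropStruMT}(i) is by definition a semi-algebraic isomorphism, so the inclusion of the fiber $\{a\}\times \sD_0$ is semi-algebraic. For holomorphicity, Proposition~\ref{PropStruMT}(ii) presents $\sD$ as the quotient of the holomorphic vector bundle $V(\BC)\times \sD_0 \to \sD_0$ by the holomorphic subbundle $F^0$; the section $\sigma_a$ factors as the constant holomorphic section $x_0 \mapsto (a, x_0) \in V(\BC)\times \sD_0$ (using $V(\BR) \subseteq V(\BC)$) followed by the holomorphic quotient map, and is therefore holomorphic. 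Moreover $\sigma_a$ is an embedding because Proposition~\ref{PropStruMT}(iii) identifies $V(\BR)\times \sD_0$ with a semi-algebraic slice meeting each fiber of $p$ transversally in a single point.

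With $\sigma_a$ a holomorphic and semi-algebraic embedding of complex manifolds, the corollary is immediate: $\sigma_a(\wt{Z}_0)$ is complex analytic and irreducible because $\wt{Z}_0$ is, and it is semi-algebraic because $\sigma_a$ is a semi-algebraic map. Hence $\{a\}\times \wt{Z}_0$ is irreducible algebraic in $\sD$ in the sense of Definition~\ref{DefnAlgMTDomain}. I do not expect any real obstacle here; the only subtle point is that the bijection $V(\BR)\times \sD_0 \cong \sD$ of Lemma~\ref{LemmaStrucMTDomain} is a priori only a real-semi-algebraic identification, so one must use the more refined description in Proposition~\ref{PropStruMT}(ii)--(iii) to see that the slice $\{a\}\times \sD_0$ is also a complex submanifold.
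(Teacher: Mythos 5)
Your argument is correct and is essentially the paper's own: the paper deduces the corollary directly from the fact (end of $\S$\ref{SubsectionClassifyingSpace}, i.e.\ Proposition~\ref{PropStruMT}) that the real slices $\{a\}\times\sM_0$ are semi-algebraic complex submanifolds, combined with Lemma~\ref{LemmaStrucMTDomain}, which is exactly what your section $\sigma_a$ makes explicit. Your write-up just spells out the transport-of-structure step (holomorphic plus semi-algebraic section, hence the image of $\wt{Z}_0$ stays complex analytic irreducible and semi-algebraic), so no further comparison is needed.
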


\begin{proof}[Proof of Lemma~\ref{LemmaStrucMTDomain}]
Recall the projection $p\colon \sM \rightarrow \sM_0$.  
To ease notation in this proof, we will write $\MT_x$ for the Mumford--Tate group of the Hodge structure parametrized by $x$ for any point in $\sM$ or in $\sM_0$.

For each $x\in \sM $, we have a surjective morphism $\MT_x\lra \MT _{p(x)}$ with kernel $V_x:=V_0\cap \MT _x$. Thus $V_x$ is the unipotent radical of $\MT_x$. 
This surjection  has a section $s\colon \MT _{p(x)}\lra \MT _x$. Via $\mathbf{G}^{\sM} = V_0\rtimes \mathbf{G}_0^{\sM}$,
 this section can be written as $s(g)=(\sigma (g), g)$ with $\sigma $ a cocycle in $Z^1(\MT _{p(x)}, V_0)$.
But  $H^1(\MT_{p(x)}, V_0)=0$ since $\MT_{p(x)}$ is reductive. 
 Thus $\sigma$  is 
 a coboundary, \textit{i.e.} $\sigma (g)=v_0-gv_0$ for some $v_0\in V_0(\BQ)$ unique up to addition by invariant vectors of $V_0$ under action by 
 $\MT _{p(x)}$. Thus, we get a precise Levi decomposition
 \begin{equation}\label{eq-MTx}
 \MT _x=V_x\rtimes \MT _{x_0}=\mathrm{Int}(v_0)(V_x\rtimes \MT _{p(x)}),
 \end{equation}
 where $V_x=V\cap \MT _x$, $x_0=(v_0, p(x))$ is a point of $\sM$ via the isomorphism \eqref{EqBettiMTDomainSemiAlg}.
 With this isomorphism, we see that $x=(v_1+v_0, p(x))$ with $v_1\in V(\BR)$ which has Zariski closure $V_x$ over $\BQ$.
 
Now let us go back to our Mumford--Tate domain  $\sD = \mathbf{G}(\BR)^+x$  with $\mathbf{G} = \MT_x$. Then $\MT_{p(x)} = \mathbf{G}_0$ and $V_x = V$. Thus \eqref{eq-MTx} becomes $\mathbf{G} = \mathrm{Int}(v_0) (V\rtimes \mathbf{G}_0)$. Now the conclusion follows from \eqref{EqBettiMTDomainSemiAlg}.
\end{proof}

\subsection{Quotient by a normal subgroup}
Next we discuss about the quotient of $\sD$.  
%
Let $N \lhd \mathrm{MT}(\sD)$. By \cite[Prop.~5.1]{GKAS}, we have a quotient in the category of complex varieties $
p_N \colon \sD \rightarrow \sD/N$ 
with the following properties: (i) $\sD/N$ is a Mumford--Tate domain in some classifying space of mixed Hodge structures (which must be of weight $-1$ and $0$) and $\mathrm{MT}(\sD/N) = \mathbf{G}/N$; (ii) each fiber of $p_N$ is an $N(\BR)^+$-orbit. Moreover $p_N$ is semi-algebraic. 

Assume $\Gamma$ is an arithmetic subgroup of $\mathbf{G}(\BQ)$. Then, the quotient $\Gamma\backslash \sD$ is an orbifold. Denote by $\Gamma_{/N}$ the image of $\Gamma$ under the quotient $\mathbf{G} \rightarrow \mathbf{G}/N$. Then $p_N$ induces
\begin{equation}\label{EqQuotientMTDomain}
[p_N] \colon \Gamma\backslash \sD \rightarrow \Gamma_{/N} \backslash (\sD/N).
\end{equation}

We close this section with the following definition.
\begin{defn}\label{DefnWSP}
A subset $\sD_N$ of $\sM$ is called a {\em weak Mumford--Tate domain} if there exist $x \in \sM$ and a normal subgroup $N$ of $\mathrm{MT}(x)$ such that $\sD_N = N(\BR)^+x$.
\end{defn}
In this definition, if $x$ is taken to be a Hodge generic point, \textit{i.e.} $\mathrm{MT}(x) = \mathbf{G}$, then the weak Mumford--Tate domain thus obtained is a fiber of $p_N$.

\section{Period map associated with normal functions}\label{SectionPeriodMap}
Retain the notation of $\S$\ref{SubsectionResultsGeom}. 
Let $S$ be a smooth irreducible quasi-projective variety. Let $(\BV_{\BZ},\sF^{\bullet})$ be a polarized VHS on $S$ of weight $-1$. Let $\nu \colon S \rightarrow  \sJ(\BV_{\BZ})$ be an admissible normal function (defined in $\mathsection$\ref{SubsectionUnivPeriodMap}).

The first goal of this section is to recall the {\em period map} $\varphi_{\nu}: S\to \Gamma \bs \sD$ associated with $\nu$ (when $\nu$ is clearly in context, we simply denote it by $\varphi$), where 
 $\sD$ is a (mixed) Mumford--Tate domain with generic Mumford--Tate group $\mathbf{G}$, and 
 $\Gamma$ is a suitable arithmetic subgroup of $\mathbf{G}(\BQ)$. This is described in $\S$\ref{SubsectionUnivPeriodMap}.

The second goal of this section is to explain how to see the Betti foliation on $\sJ(\BV_{\BZ})$ in terms of the period map and the fibered structure of the Mumford--Tate domain $\sD$, or as called in references, how to see the fibration \eqref{EqQuotClassfyingSpaceToPure} of the classifying space $\sM \rightarrow \sM_0$ as the {\em universal intermediate Jacobians}. This is done in $\mathsection$\ref{SubsectionBettiFoliationBettiRank}.

We also recall in $\mathsection$\ref{SubsectionOminimality} the o-minimal structure attached to the period map.
\subsection{Period map}\label{SubsectionUnivPeriodMap}
Admissible normal functions are defined as follows. First of all, each holomorphic section $\nu \colon S \rightarrow \sJ(\BV_{\BZ})$ defines a family of mixed Hodge structures $(\BE_{\nu},W_{\bullet}, \sF_{\BE}^{\bullet})$ on $S$ of weight $-1$ and $0$, such that the local system $\BE_{\nu}$ fits into a short exact sequence $0 \rightarrow \BV_{\BZ} \rightarrow \BE_{\nu} \rightarrow \BZ_S \rightarrow 0$; see for example \cite[$\S$4.1]{Ha13}. Then $\nu$ is called an {\it admissible normal function} if $(\BE_{\nu},W_{\bullet}, \sF_{\BE}^{\bullet})$ is an admissible variation of mixed Hodge structures (in particular, graded-polarizable).\footnote{Admissibility is a good asymptotic property near the boundary \cite{SZ85, Kashiwara86}. The precise definition is not needed in our paper since all our VMHSs are of geometric origin (hence admissible \cite{EZ}).} 

The VMHS $(\BE_{\nu},W_{\bullet}, \sF_{\BE}^{\bullet})$ induces the {\it universal period map}
\[
\wt{\varphi} = \wt{\varphi}_{\nu} \colon \wt{S} \longrightarrow \sM 
\]
from the universal cover $u_S \colon \wt{S} \rightarrow S^{\mathrm{an}}$ 
to a suitable classifying space $\sM$ \eqref{EqClassifyingSpaceMixed} of mixed Hodge structures of weight $-1$ and $0$ as follows. There is canonical trivialization $u_S^*\BE_{\nu} \cong \wt{S} \times E_{\nu}$ with $E_{\nu} = H^0(\wt{S}, u_S^*\BE_{\nu})$; then for any $s \in S(\BC)$ the fiber $\BE_{\nu,s}$ can be canonically identified with $E_{\nu}$. Then $\wt{\varphi}$ sends  $\tilde{s}\mapsto (E_{\nu,\BQ}, (W_{\bullet})_{\wt{s}}, (\sF_{\BE}^{\bullet})_{\wt{s}})$.

Next, let $\sD$ be the {\it smallest Mumford--Tate domain} in $\sM$ which contains $\wt{\varphi}(\wt{S})$; see for example \cite[$\mathsection$7.1]{GKAS} for a construction of $\sD$. Then $\wt{\varphi}$ becomes $\wt{\varphi} \colon \wt{S} \rightarrow \sD$. Finally, since $S$ is a quasi-projective variety, there exists an arithmetic subgroup $\Gamma$ of $\mathbf{G}(\BQ)$ such that $\wt{\varphi}$ descends to a holomorphic map $\varphi \colon S^{\mathrm{an}} \rightarrow \Gamma\backslash\sD$.


 \subsection{Setup for o-minimality}\label{SubsectionOminimality}

Let $\mathfrak{F} \subseteq \sD$ be a fundamental set for the quotient $u \colon \sD \rightarrow \Gamma\backslash\sD$, \textit{i.e.} $u|_{\mathfrak{F}}$ is surjective and $(u|_{\mathfrak{F}})^{-1}(\bar{x})$ is finite for each $\bar{x} \in \Gamma\backslash\sD$. If $\mathfrak{F}$ is a semi-algebraic subset of $\sD$, then $u|_{\mathfrak{F}}$ includes a semi-algebraic structure on $\Gamma\backslash\sD$.

By the main result of  \cite{BBKT}, there exists a semi-algebraic fundamental set $\mathfrak{F} \subseteq \sD$ for the quotient $u \colon \sD \rightarrow \Gamma\backslash\sD$ with the following properties: $\varphi$ is $\BR_{\mathrm{an},\exp}$-definable for the semi-algebraic structure on $\Gamma\backslash\sD$ defined by $\mathfrak{F}$.

\subsection{Relating the Betti foliation and the period map}\label{SubsectionBettiFoliationBettiRank}
Let $V_0 := H^0(\wt{S}, u_S^*\BV_{\BQ})$. Now $V_{0,\BZ} := H^0(\wt{S},u_S^*\BV_{\BZ})$ is a lattice in $V_0(\BR)$ and  $u_S^*\BV_{\BZ} \cong V_{0,\BZ} \times \wt{S}$. So $\sJ(\BV_{\BZ}) = \BV_{\BR}/\BV_{\BZ}$ induces $\sJ(\BV_{\BZ}) \times_S \wt{S} =  (V_{0,\BR}/V_{0,\BZ}) \times \wt{S} $. For the universal covering map $u_{\sJ} \colon \tilde{\sJ} \rightarrow \sJ(\BV_{\BZ})$, this furthermore induces $\tilde{\sJ} =  V_0(\BR) \times \wt{S}$. Then  the leaves of the Betti foliation $\CF_{\mathrm{Betti}}$ defined below \eqref{EqLocSysTori} are precisely those $u_{\sJ}(\{a\} \times \wt{S})$ for all $a \in V_0(\BR)$.

It can be related to periods maps as follows. For each $s \in S(\BC)$ the fiber $\BV_{\BQ,s}$ can be canonically identified with $V_0$, and the polarized VHS $\BV_{\BZ} \rightarrow S$ induces a period map $\wt{\varphi}_0 \colon \wt{S} \rightarrow \sM_0$. 
On the other hand, each $x \in \sJ(\BV_{\BZ})$ lies in $\sJ(\BV_{\BZ})_s = \sJ(\BV_{\BZ,s})$ for $s =\pi(x)$, which is canonically isomorphic to $\mathrm{Ext}_{\mathrm{MHS}}(\BZ(0), \BV_{\BZ,s})$ by Carlson \cite{Carlson85}.  Hence each $\wt{x} \in \wt{\sJ}$ gives rise to a $\BQ$-mixed Hodge structure of weight $-1$ and $0$ which is an extension of $\BQ(0)$ by $V_0$. 
So we obtain a period map $\wt{\varphi}_{\sJ} \colon \wt{\sJ} \rightarrow \sM$, which by construction is
\begin{equation}\label{EqPeriodJacobian}
\wt{\varphi}_{\sJ} \colon \wt{\sJ} = V_0(\BR) \times \wt{S} \xrightarrow{(1,  \wt{\varphi}_0)} V_0(\BR) \times \sM_0 = \sM
\end{equation}
under the identifications $\tilde{\sJ} =  V_0(\BR) \times \wt{S}$ given in the previous paragraph and $\sM = V_0(\BR) \times \sM_0$ given by \eqref{EqClassfyingSpaceDecomposition}. 
By last paragraph, the leaves of $\CF_{\mathrm{Betti}}$ are precisely $u_{\sJ}\left( \wt{\varphi}_{\sJ}^{-1}(\{a\}\times \sM_0) \right)$ for $a \in V_0(\BR)$.

\smallskip
Now we go back to $\wt{\varphi} = \wt{\varphi}_{\nu} \colon \wt{S} \rightarrow \sD$ given by the normal function $\nu$. 
The map $\nu \circ u_S \colon \wt{S} \rightarrow S \rightarrow \sJ(\BV_{\BZ})$ lifts to $\wt{\nu} \colon \wt{S} \rightarrow \wt{\sJ}$, and $\wt{\varphi} = \wt{\varphi}_{\sJ}\circ \wt{\nu}$. 

Denote by $\mathbf{G}:=\mathrm{MT}(\sD)$ the generic Mumford--Tate group (see below Definition~\ref{DefnMTDomain}), and $V$ its unipotent radical. 

Denote by $\sD_0 := p(\sD) \subseteq \sM_0$ for the projection to the pure part $p \colon \sM \rightarrow \sM_0$  from \eqref{EqQuotClassfyingSpaceToPure}, and by abuse of notation $p =p|_{\sD} \colon \sD \rightarrow \sD_0$. Recall the natural isomorphism $\sD = V(\BR) \times \sD_0$ below Lemma~\ref{LemmaStrucMTDomain}. The period maps $\varphi$ and $\wt\varphi$  can be completed into:
\begin{equation}\label{EqDiagramPeriodMapComplete}
\xymatrix{
\wt{S} \ar[r]_-{\wt{\varphi}} \ar[d]_{u_S} \ar@/^1.5pc/[rr]|-{\wt{\varphi}_0} & \sD = V(\BR) \times \sD_0  \ar[d]^-{u}  \ar[r]_-{p} & \sD_0 \ar[d]^-{u_0} \\
S \ar[r]^-{\varphi}  \ar@/_1.5pc/[rr]|-{\varphi_0} & \Gamma\backslash \sD \ar[r]^-{[p]} & \Gamma_0 \backslash \sD_0 
}
\end{equation}
with $p$ being the projection to the second factor.

We can decompose $\wt{\varphi}$ as 
\begin{equation}
\wt{\varphi} = (\wt{\varphi}_V , \wt{\varphi}_0) \colon \wt{S} \longrightarrow  \sD = V(\BR) \times \sD_0.
\end{equation}

Recall $S^{\mathrm{Betti}}(t) := \{s \in S(\BC): \dim_{\nu(s)} (\nu(S) \cap \CF_{\mathrm{Betti}}) \ge t \}$ and define $$S_{\mathrm{rk}}(t) = \{ s\in S(\BC): \dim \nu_{\mathrm{Betti},s}(T_s S) \le t\}$$ for $\nu_{\mathrm{Betti},s} \colon T_s S \rightarrow T_{\nu(s)} \sJ(\BV_{\BZ,s})$ defined in \eqref{DefnBettiIntro}.

\begin{lemma}\label{LemmaBettiFoliationBettiRank}
For each $t \ge 0$, we have
\begin{align}\label{EqBettiMapBettiFoliation1}
u_S^{-1}\left(S^{\mathrm{Betti}}(t)\right) & = \{\wt{s} \in \wt{S} : \dim_{\wt{s}} \wt{\varphi}^{-1}(\{\wt{s}_V\} \times \sD_0) \ge t \} \nonumber\\
& =  \bigcup\nolimits_{r\ge 0} \{\wt{s} \in \wt{S} : \dim_{\wt{s}} \wt{\varphi}^{-1}(\wt{\varphi}(\wt{s})) = r, ~ \{\wt{s}_V\} \times \wt{C} \subseteq \wt{\varphi}(\wt{S}) \text{ for } \\
& \qquad \qquad \quad \text{  some complex analytic }\wt{C}\text{ with }\dim\wt{C} \ge t-r \}, \nonumber
\end{align}
where $\wt{s}_V = \wt{\varphi}(\wt{s})$ for each $\wt{s} \in \wt{S}$, and 
\begin{equation}\label{EqBettiMapBettiFoliation2}
u_S^{-1}\left(S_{\mathrm{rk}}(t)\right) = \{\wt{s} \in \wt{S} : \mathrm{rank} (\mathrm{d}\wt{\varphi}_V)_{\wt{s}} \le t\}.
\end{equation}
\end{lemma}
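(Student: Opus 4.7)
The plan is to derive all three identifications from the factorization $\wt{\varphi} = \wt{\varphi}_{\sJ} \circ \wt{\nu}$ combined with the product description \eqref{EqPeriodJacobian} of $\wt{\varphi}_{\sJ}$. Writing $\wt{\nu}(\wt{s}) = (\wt{\nu}_V(\wt{s}), \wt{s}) \in V_0(\BR) \times \wt{S}$, the factorization yields $\wt{\varphi}(\wt{s}) = (\wt{\nu}_V(\wt{s}), \wt{\varphi}_0(\wt{s}))$, so $\wt{\varphi}_V = \wt{\nu}_V$ after the affine shift by $v_0 \in V_0(\BQ)$ supplied by Lemma~\ref{LemmaStrucMTDomain}. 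For each $\wt{s} \in \wt{S}$, the Betti leaf through $\nu(u_S(\wt{s}))$ lifts through $u_{\sJ}$ to $\{\wt{\nu}_V(\wt{s})\} \times \wt{S}$, so its intersection with the graph of $\wt{\nu}$ is exactly $\wt{\nu}_V^{-1}(\wt{\nu}_V(\wt{s})) = \wt{\varphi}_V^{-1}(\wt{\varphi}_V(\wt{s})) = \wt{\varphi}^{-1}(\{\wt{s}_V\} \times \sD_0)$. The slice $\{\wt{s}_V\} \times \sD_0 \subseteq \sD$ is a complex submanifold, being the image of the holomorphic section $x_0 \mapsto [(\wt{s}_V, x_0)]$ of the holomorphic vector bundle $\sD \to \sD_0$ coming from Proposition~\ref{PropStruMT}(ii). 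Hence $F_V := \wt{\varphi}^{-1}(\{\wt{s}_V\} \times \sD_0)$ is complex analytic in $\wt{S}$, and since $u_S$ is a local biholomorphism, the first equality of \eqref{EqBettiMapBettiFoliation1} follows at the level of complex local dimensions.

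For the second equality of \eqref{EqBettiMapBettiFoliation1}, I would apply Remmert's rank theorem to the holomorphic restriction $\wt{\varphi}|_{F_V} \colon F_V \to \{\wt{s}_V\} \times \sD_0$, whose fiber at $\wt{\varphi}(\wt{s})$ is $\wt{\varphi}^{-1}(\wt{\varphi}(\wt{s}))$ of local dimension $r$ at $\wt{s}$. In the direction $\dim_{\wt{s}} F_V \ge t \Rightarrow$ existence of $\wt{C}$: take an irreducible component $F_V^0$ of $F_V$ through $\wt{s}$ of dimension $\ge t$; by upper semicontinuity of fiber dimensions the generic rank of $d(\wt{\varphi}|_{F_V^0})$ is $\rho \ge \dim F_V^0 - r \ge t - r$, and the rank theorem at a nearby smooth point where this generic rank is attained produces a complex submanifold of the form $\{\wt{s}_V\} \times \wt{C}$ in $\wt{\varphi}(\wt{S})$ with $\dim \wt{C} = \rho$. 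The converse direction uses that $\wt{\varphi}^{-1}(\{\wt{s}_V\} \times \wt{C}) \subseteq F_V$ together with the fiber-dimension theorem to bound $\dim_{\wt{s}} F_V$ from below.

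For \eqref{EqBettiMapBettiFoliation2}, I would identify $(d\wt{\varphi}_V)_{\wt{s}}$ with $\nu_{\mathrm{Betti},s}$ as complex-linear maps. The vertical tangent bundle of $p \colon \sD \to \sD_0$ is the holomorphic bundle with fiber $V(\BC)/F^0V(\BC) \cong V(\BR)$, canonically identified with $T_{\nu(s)}\sJ(\BV_{\BZ})_s$ via Carlson's \cite{Carlson85} description of the intermediate Jacobian as $\mathrm{Ext}^1_{\mathrm{MHS}}$. Under these identifications and the factorization $\wt{\varphi} = \wt{\varphi}_{\sJ} \circ \wt{\nu}$, the projection of the holomorphic differential $d\wt{\varphi}$ onto the vertical is precisely $d\wt{\varphi}_V$ and equals $\nu_{\mathrm{Betti},s}$; comparing complex ranks gives \eqref{EqBettiMapBettiFoliation2}.

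The main technical obstacle will be the converse direction of the second paragraph: the image of a holomorphic map is only constructible in general, so one has to ensure $\wt{C}$ is contained in the actual image rather than its analytic closure, and that the dimension estimate controls $F_V$ at $\wt{s}$ itself rather than at a generic nearby point. The remaining steps are essentially formal consequences of the identification $\wt{\varphi}_V = \wt{\nu}_V$ produced in the first paragraph.
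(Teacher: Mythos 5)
Your argument follows essentially the same route as the paper: the first identity and \eqref{EqBettiMapBettiFoliation2} are obtained, exactly as there, from the leaf description below \eqref{EqPeriodJacobian}, the factorization $\wt{\varphi}=\wt{\varphi}_{\sJ}\circ\wt{\nu}$, the injectivity of $\nu$, and the shift supplied by Lemma~\ref{LemmaStrucMTDomain}, while the paper simply declares the second equality of \eqref{EqBettiMapBettiFoliation1} "clear" and proves \eqref{EqBettiMapBettiFoliation2} "similarly". Your Remmert-rank argument gives the inclusion $\subseteq$, which is the direction actually used later (Proposition~\ref{PropBettiRankNonMaxAndDegLocus}); the converse obstacle you flag is a genuine subtlety that the paper also glosses over, and it is resolved by reading the right-hand side with $\wt{C}$ through $\wt{\varphi}_0(\wt{s})$ arising as the local image of the branch of $\wt{\varphi}^{-1}(\{\wt{s}_V\}\times\sD_0)$ through $\wt{s}$, which is how the equality is invoked in \eqref{EqSCFSprimeCF}.
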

In the union in \eqref{EqBettiMapBettiFoliation1}, the second condition  $\{\wt{s}_V\} \times \wt{C} \subseteq \wt{\varphi}(\wt{S})$ always holds for $r> t$.
 
\begin{proof}
By the characteriation of the leaves of $\CF_{\mathrm{Betti}}$ below \eqref{EqPeriodJacobian}, we have  
\[
u_{\sJ}^{-1}\left( \nu(S^{\mathrm{Betti}}(t))\right) = \{ \wt{s}' \in \wt{\nu}(\wt{S}) : \dim_{\wt{s}'} \wt{\varphi}_{\sJ}^{-1}(\{\wt{s}'_V\} \times \sM_0\}) \ge  t\},
\]
 where $\wt{s}'_V$ is the image of $\wt{s} \in \wt{\sJ} \xrightarrow{\wt{\varphi}_{\sJ}} V_0(\BR) \times \sM_0 \rightarrow V_0(\BR)$ with the last map being the natural projection. 
Applying $\wt{\nu}^{-1}$ (whose fibers are of dimension $0$ because $\nu$ is injective) to the set above and noticing that $u_{\sJ}\circ \wt{\nu}=\nu\circ u_S$, we have
\[
u_S^{-1}\left(S^{\mathrm{Betti}}(t)\right) = \{\wt{s} \in \wt{S} : \dim_{\wt{s}} (\wt{\varphi}_{\sJ}\circ \wt{\nu})^{-1}(\{\wt{s}'_V\} \times \sM_0\}) \ge t\}.
\] 
Hence the first equality in \eqref{EqBettiMapBettiFoliation1} holds because $ \wt{\varphi}_{\sJ}\circ \wt{\nu}=\wt{\varphi} $ and by Lemma~\ref{LemmaStrucMTDomain}. Similarly we can prove \eqref{EqBettiMapBettiFoliation2}.

The second equality in \eqref{EqBettiMapBettiFoliation1} clearly holds. We are done.
\end{proof}

We also make the following observation. Lemma~\ref{LemmaStrucMTDomain} and the discussion around \eqref{EqPeriodJacobian} imply the following: $\sJ_{\nu} := S\times_{\Gamma_0\backslash\sD_0}(\Gamma\backslash\sD) \subseteq \sJ(\BV_{\BZ})$ 
is the relative intermediate Jacobian of a sub-VHS 
 translated by a torsion multisection; it contains $\nu(S)$ and is the minimal one containing $\nu(S)$ with respect to inclusion. The relative dimension $\dim \sJ_{\nu} - \dim S$ equals $\frac{1}{2}\dim V$. Hence we have the following geometric characterization of $V$.
\begin{rmk}\label{RmkSubTorusFib}
$V$  equals $\BV'_{\BQ,s}$ for any $s \in S(\BC)$, where  $\BV'_{\BZ}$ is the largest sub-VHS of $\BV_{\BZ}$ such that $\nu$ is torsion under the projection $\sJ(\BV_{\BZ}) \rightarrow \sJ(\BV_{\BZ}/\BV'_{\BZ})$.
\end{rmk}
 
Since $\wt{\varphi}_V$ factors through $\wt{\varphi}$ and has target $V(\BR)$, we have the following trivial upper bound  by \eqref{EqBettiMapBettiFoliation2}
\begin{equation}\label{EqBettiRankTrivialUB}
\nu_{\mathrm{Betti},s}(T_s S) \le \min\left\{ \dim \varphi(S) , \frac{1}{2}\dim V \right\} \quad\text{ for all }s \in S(\BC).
\end{equation}
This trivial upper bound can furthermore be improved when $\sJ(\BV_{\BZ})\rightarrow S$ is not an abelian scheme by Griffiths' transversality.

\section{The Betti rank and Zariski closedness of the Betti strata}\label{SectionBettiRank}

The goal of this section is to prove Theorem~\ref{ThmZarClosedIntro} and Theorem~\ref{ThmBettiRankIntro}. 
Retain the notation of $\S$\ref{SubsectionResultsGeom}. 
Let $S$ be a smooth irreducible quasi-projective variety. Let $(\BV_{\BZ},\sF^{\bullet})$ be a polarized VHS on $S$ of weight $-1$. Let $\nu \colon S \rightarrow  \sJ(\BV_{\BZ})$ be an admissible normal function. We have the Betti foliation $\CF_{\mathrm{Betti}}$ on $\sJ(\BV_{\BZ})$, which induces the linear map \eqref{DefnBettiIntro} $$
\nu_{\mathrm{Betti},s} \colon T_s S \rightarrow T_{\nu(s)} \sJ(\BV_{\BZ,s})$$
at each $s \in S(\BC)$, and the Betti rank \eqref{EqBettiRankIntro} 
\begin{equation*}
r(\nu) = \max_{s \in S(\BC)} \dim\nu_{\mathrm{Betti},s}(T_s S).
\end{equation*}

 Retain the notation from \eqref{EqDiagramPeriodMapComplete}. In particular, we have the period map  $\varphi = \varphi_{\nu} \colon S \rightarrow \Gamma\backslash\sD$ for the (mixed) Mumford--Tate domain $\sD$, the $\BQ$-group $\mathbf{G}=\MT(\sD)$ and its unipotent radical $V$ which is a vector group. We emphasize that $V$ is, in general, not a fiber of $\BV_{\BQ}$.

We prove the following formula for $r(\nu)$, which is often computable in practice.
\begin{thm}\label{ThmBettiRankFormulaMain}
The following formula gives the Betti rank:
\begin{equation}\label{EqBettiRankFormula}
r(\nu) = \min_{N} \left\{ \dim \varphi_{/N}(S) + \frac{1}{2}\dim_{\BQ}(V\cap N) \right\},
\end{equation}
where $N$ runs through the set of normal subgroups of $\mathbf{G}$, and $\varphi_{ /N}$ is the induced map
\[
\varphi_{/N} \colon S \xrightarrow{\varphi} \Gamma\backslash\sD \xrightarrow{[p_N]} \Gamma_{/N}\backslash (\sD/N)
\]
with $[p_N]$  the quotient defined in \eqref{EqQuotientMTDomain}.
\end{thm}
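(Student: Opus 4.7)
The plan is to establish the formula by proving both inequalities separately. As a preparatory step, I invoke Lemma~\ref{LemmaBettiFoliationBettiRank}\eqref{EqBettiMapBettiFoliation2} to identify $r(\nu)$ with the generic complex rank of the Betti component $\wt\varphi_V\colon \wt S \to V(\BR)$: here the target $V(\BR)$ carries the complex structure $V(\BR)\cong V(\BC)/F^0V(\BC)$ of complex dimension $\tfrac{1}{2}\dim_\BQ V$ inherited from Proposition~\ref{PropStruMT}(iii), and under this identification $\wt\varphi_V$ is holomorphic (fiberwise over $\sD_0$). Thus $r(\nu)=\max_{\wt s\in\wt S}\mathrm{rk}(\mathrm{d}\wt\varphi_V)_{\wt s}$ with $\mathrm{rk}$ denoting complex rank.

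For the inequality ``$\le$'', fix $N\lhd\mathbf{G}$. Using the Levi decomposition implicit in \eqref{EqMTGroupClassSpMixed}, one checks that the unipotent radical of $\mathbf{G}/N$ equals $V/(V\cap N)$, so Lemma~\ref{LemmaStrucMTDomain} applied to $\sD/N$ gives the semi-algebraic decomposition $\sD/N \cong (V/(V\cap N))(\BR)\times (\sD/N)_0$. Under this splitting the Betti component of $\wt\varphi_{/N}$ is the composition $\wt\varphi_{V,/N} = q\circ \wt\varphi_V$, where $q\colon V(\BR)\twoheadrightarrow (V/(V\cap N))(\BR)$ has kernel $(V\cap N)(\BR)$ of complex dimension $\tfrac{1}{2}\dim_\BQ(V\cap N)$. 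A pointwise rank-nullity argument yields $\mathrm{rk}(\mathrm{d}\wt\varphi_V)_{\wt s}\le \mathrm{rk}(\mathrm{d}\wt\varphi_{V,/N})_{\wt s}+\tfrac{1}{2}\dim_\BQ(V\cap N)$, and combining with the trivial bound $\mathrm{rk}(\mathrm{d}\wt\varphi_{V,/N})_{\wt s}\le \dim\varphi_{/N}(S)$ and maximizing over $\wt s$ gives $r(\nu)\le \dim\varphi_{/N}(S)+\tfrac{1}{2}\dim_\BQ(V\cap N)$.

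For the reverse inequality, I construct an $N$ realizing equality. Choose $\wt s_0 \in \wt S$ where $\mathrm{rk}(\mathrm{d}\wt\varphi_V)_{\wt s_0} = r(\nu)$ and $\wt s_0$ is Hodge-generic, and let $\wt C$ be the irreducible component through $\wt s_0$ of $\wt\varphi_V^{-1}(\wt\varphi_V(\wt s_0))$, a complex analytic subset of $\wt S$ of dimension $\dim S - r(\nu)$. Then $\wt\varphi(\wt C)$ is contained in the Betti leaf $L_{\wt s_0}:=\{\wt\varphi_V(\wt s_0)\}\times \sD_0\subseteq \sD$. The key step is to apply an Ax--Schanuel--type theorem for admissible VMHS of weight $-1$/$0$ (available through Bakker--Tsimerman, Gao--Klingler, Chiu, and related work) to $\wt\varphi(\wt C)$: its bi-algebraic hull in $\sD$ is a weak Mumford--Tate subdomain of the form $\sD_N = N(\BR)^+\cdot\wt\varphi(\wt s_0)$ for a \emph{normal} subgroup $N\lhd\mathbf{G}$ (the normality is produced from $\wt s_0$ being Hodge-generic), with
$\dim\sD_N = \dim\wt\varphi(\wt C) + \tfrac{1}{2}\dim_\BQ(V\cap N)$ (the extra term records that the $N$-orbit extends $\wt\varphi(\wt C)$ along the unipotent radical $V\cap N$ in the complex, not merely real, direction). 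Passing to $\varphi_{/N}$ then collapses exactly $\sD_N$, so $\dim\varphi_{/N}(S) = \dim\varphi(S)-\dim\sD_N$; combining with $\dim\wt C = \dim S - r(\nu)$ and $\dim\varphi(S) = \dim S$ (which may be assumed after shrinking $S$) yields $\dim\varphi_{/N}(S)+\tfrac{1}{2}\dim_\BQ(V\cap N)=r(\nu)$.

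The main obstacle is the ``$\ge$'' direction: producing a \emph{normal} subgroup $N\lhd\mathbf{G}$ attached to the generic Betti fiber $\wt C$ together with the precise dimension identity. This is fundamentally an Ax--Schanuel / bi-algebraicity statement in the mixed setting for admissible VMHS of weight $-1$/$0$, and the appearance of the factor $\tfrac{1}{2}$ depends on carefully distinguishing between the real and complex structures on the unipotent radical $V\cap N$, reflecting that $V$ has a pure Hodge structure of weight $-1$ whose real points form a complex vector space via $V(\BR)\cong V(\BC)/F^0V(\BC)$.
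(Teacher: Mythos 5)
Your ``$\le$'' direction is essentially the paper's one-line argument via \eqref{EqBettiMapBettiFoliation2}, except that your justification is shaky: $\wt{\varphi}_V$ is \emph{not} holomorphic for any fixed complex structure on $V(\BR)$, because the identification $V(\BR)\cong V(\BC)/F^0_{x_0}V(\BC)$ varies with the point $x_0\in\sD_0$; what is true, and what the paper's bookkeeping rests on, is that the fibers $\wt{\varphi}^{-1}(\{a\}\times\sD_0)$ are complex analytic (Corollary~\ref{CorZariskiClosureBettiFiber}). The genuine gap is in ``$\ge$''. Your key assertions there are not consequences of Ax--Schanuel and are false in general: mixed Ax--Schanuel (Theorem~\ref{ThmAS}) gives only the inequality $\dim\wt{Z}^{\mathrm{Zar}}+\dim u(\wt{Z})^{\mathrm{Zar}}\ge\dim\wt{Z}^{\mathrm{ws}}+\dim\wt{Z}$; it does not yield the identity $\dim\sD_N=\dim\wt{\varphi}(\wt{C})+\tfrac{1}{2}\dim_\BQ(V\cap N)$ (the weakly special closure of an analytic set can be far larger than the set, and the correct use of the extra term $\tfrac{1}{2}\dim_\BQ(V\cap N)$ is as a \emph{lower bound} on the dimension of the Zariski closure of $u(\{a\}\times\wt{C})$, as in Proposition~\ref{PropBettiRankNonMaxAndDegLocus}). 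Likewise ``passing to $\varphi_{/N}$ collapses exactly $\sD_N$, so $\dim\varphi_{/N}(S)=\dim\varphi(S)-\dim\sD_N$'' presumes that $\varphi(S)$ contains an open piece of the orbit $\sD_N$, which nothing guarantees.

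More fundamentally, even after correcting these to the right inequalities, your construction produces only \emph{one} large fiber of $\varphi_{/N}$, namely the one through the chosen point $s_0$. Fiber dimension is merely upper semicontinuous, so a single large special fiber gives no upper bound on $\dim\varphi_{/N}(S)$: you need the \emph{same} normal subgroup $N$ to produce fibers of dimension $\ge t+\tfrac{1}{2}\dim_\BQ(V\cap N)$ over a Zariski-dense set of points, and a priori the subgroup attached to the leaf through $s_0$ varies with $s_0$, possibly over infinitely many normal subgroups of $\mathbf{G}$ (e.g.\ when $V$ has infinitely many $\mathbf{G}_0$-submodules). This uniformity is the heart of the paper's proof and is supplied by the Geometric Zilber--Pink finiteness (Proposition~\ref{PropFinitenessBogomolov}: finitely many pairs $(\sD_j,N_j)$) combined with the o-minimal structure and definable Chow, which make the large-fiber loci $E_j(t)$ of \eqref{EqEjSet} Zariski closed; one then concludes because a Zariski-closed set containing a nonempty open subset of $S^{\mathrm{an}}$ is all of $S$. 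Finally, your reduction ``$\dim\varphi(S)=\dim S$, which may be assumed after shrinking $S$'' does not work: no Zariski-open shrinking kills positive-dimensional fibers of $\varphi$. The paper instead replaces $S$ by $\varphi(S)$ (an algebraic variety by \cite{BBTmixed}) and keeps track of the fiber loci $S_{\ge r}$ through the identity \eqref{EqSCFSprimeCF}; this is also where the case $N=\{1\}$ (i.e.\ $S=S_{\ge t}$) is handled. These missing ingredients constitute a genuine gap rather than a routine omission.
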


Taking $N = \{1\}$ and $N=\mathbf{G}$, we recover the trivial upper bound  \eqref{EqBettiRankTrivialUB} of $r(\nu)$. 


We also show that the Betti foliation defines Zariski's closed strata on $S$.
\begin{thm}\label{ThmZariskiClosedDegLoci}
For each $t \ge 0$, the set 
\[
S^{\mathrm{Betti}}(t) := \{s \in S(\BC): \dim_{\nu(s)} (\nu(S) \cap \CF_{\mathrm{Betti}}) \ge t \}
\]
is Zariski closed in $S$. In particular, $r(\nu) = \dim S - \min\{ t \ge 0 : S^{\mathrm{Betti}}(t) = S\}$.
\end{thm}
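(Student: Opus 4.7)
The plan is to show that $S^{\mathrm{Betti}}(t)$ is a closed, locally complex analytic, and $\BR_{\mathrm{an},\exp}$-definable subset of $S$, then invoke the o-minimal Chow theorem of Peterzil--Starchenko (in the form used for period maps in \cite{BBKT}) to upgrade this to Zariski closedness. The ``in particular'' clause will then follow from a generic-rank analysis of the differential of $\wt\varphi_V$.

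The key reformulation, via Lemma~\ref{LemmaBettiFoliationBettiRank}, is
\[
u_S^{-1}(S^{\mathrm{Betti}}(t)) = \{\wt s \in \wt S : \dim_{\wt s} \wt\varphi_V^{-1}(\wt\varphi_V(\wt s)) \ge t\},
\]
where $\wt\varphi = (\wt\varphi_V, \wt\varphi_0) \colon \wt S \to V(\BR) \times \sD_0 = \sD$. Crucially, each fiber $\wt\varphi_V^{-1}(a) = \wt\varphi^{-1}(\{a\}\times\sD_0)$ is complex analytic in $\wt S$, since $\wt\varphi$ is holomorphic and $\{a\}\times\sD_0$ is complex analytic in $\sD$ by Corollary~\ref{CorZariskiClosureBettiFiber}. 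Combined with the $\BR_{\mathrm{an},\exp}$-definability of $\varphi$ recalled in $\mathsection$\ref{SubsectionOminimality} and the o-minimal fact that fiber dimension is a definable invariant, this shows $S^{\mathrm{Betti}}(t)$ is closed (by upper semi-continuity of fiber dimension for definable maps) and $\BR_{\mathrm{an},\exp}$-definable in $S$. Moreover, given $\wt s_0 \in u_S^{-1}(S^{\mathrm{Betti}}(t))$ and $a_0 := \wt\varphi_V(\wt s_0)$, the union $C_{a_0}$ of those irreducible components of $\wt\varphi_V^{-1}(a_0)$ passing through $\wt s_0$ of complex dimension $\ge t$ is a complex analytic germ of dimension $\ge t$ contained in $u_S^{-1}(S^{\mathrm{Betti}}(t))$, because $\wt\varphi_V$ is constant along each fiber.

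The main obstacle is upgrading ``$S^{\mathrm{Betti}}(t)$ contains a complex analytic germ of dimension $\ge t$ through each of its points'' to ``$S^{\mathrm{Betti}}(t)$ is complex analytic'', since the germs $C_{a_0}$ are indexed by the real parameter $a_0 \in V(\BR)$ and their union need not be complex analytic a priori. I would handle this via o-minimal cell decomposition of $u_S^{-1}(S^{\mathrm{Betti}}(t))$: on each top-dimensional cell the parameter $a_0$ varies in a uniform definable family, and matching the real dimension of the cell with that of the maximal $C_{a_0}$ through its points should force the cell to be open in a complex analytic subvariety of $\wt S$. Once complex analyticity is established, definable Chow yields Zariski closedness. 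Finally, the ``in particular'' clause follows because generically on $S$ the differential $\mathrm{d}\wt\varphi_V$ attains its maximal rank $r(\nu)$ (in the dimension convention compatible with $\dim \nu_{\mathrm{Betti},s}(T_sS)$), so the generic fiber of $\wt\varphi_V$ has complex dimension $\dim S - r(\nu)$, yielding the claimed relation between $r(\nu)$ and the largest $t$ with $S^{\mathrm{Betti}}(t) = S$.
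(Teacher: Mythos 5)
Your reduction via Lemma~\ref{LemmaBettiFoliationBettiRank} and the observation that each fiber $\wt{\varphi}_V^{-1}(a)=\wt{\varphi}^{-1}(\{a\}\times\sD_0)$ is complex analytic (Corollary~\ref{CorZariskiClosureBettiFiber}) are correct, and you have correctly located the crux: $u_S^{-1}(S^{\mathrm{Betti}}(t))$ is a union of complex analytic sets indexed by the \emph{real} parameter $a\in V(\BR)$. But the step you propose to bridge this --- o-minimal cell decomposition plus ``dimension matching'' forcing each top cell to be open in a complex subvariety --- does not work, and no purely o-minimal argument can. A definable set swept out by complex germs over a real parameter is typically only Levi-flat real analytic (think of $\BR\times\BC\subseteq\BC^2$, foliated by the complex lines $\{x_0\}\times\BC$): the real dimension count you invoke is perfectly consistent with such a configuration, e.g.\ a real $3$-dimensional cell in a surface $S$ foliated by $1$-dimensional complex germs $C_a$ with $a$ moving in a $1$-parameter definable family. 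Ruling this out is precisely the content of the theorem, and it is where the paper's proof injects functional transcendence: mixed Ax--Schanuel (Theorem~\ref{ThmAS}) shows that any germ $\{a\}\times\wt{C}$ of dimension $t$ inside $\wt{\varphi}(\wt{S})$ forces its image to lie in a weakly optimal subvariety satisfying the atypicality inequality \eqref{EqDegenerateMemberUnion} (Proposition~\ref{PropBettiRankNonMaxAndDegLocus}), and the Geometric Zilber--Pink finiteness statement (Proposition~\ref{PropFinitenessBogomolov}) confines all such subvarieties to finitely many loci $E_j(t)$ defined in \eqref{EqEjSet} by fiber dimensions of the \emph{holomorphic} definable maps $[p_{N_j}]|_S$. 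Those sets are genuinely complex analytic and definable, so definable Chow applies to them (not to $S^{\mathrm{Betti}}(t)$ directly), and the two inclusions Proposition~\ref{PropFoliationDegLociSubset} and Lemma~\ref{LemmaEjInDegLocus}, combined with the reduction \eqref{EqSCFSprimeCF} to $\varphi(S)$ (which also handles positive-dimensional fibers of $\varphi$ via the loci $S_{\ge r}$, a point your proposal skips), give the identity \eqref{EqDegLocit} from which Zariski closedness follows. Without Ax--Schanuel your argument would prove too much; indeed even the polarizable (abelian-scheme) special case \cite{GaoBettiRank} requires it.

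Two further, smaller problems. Closedness of $S^{\mathrm{Betti}}(t)$ is not ``upper semi-continuity of fiber dimension for definable maps'': that principle is false for real definable maps (for $f(x,y)=x^2+y^2$ the locus where the local fiber dimension is $\ge 1$ is the complement of the origin, which is not closed), so closedness is not free at this stage --- in the paper it comes out only at the end, from upper semi-continuity applied to the \emph{holomorphic} maps $[p_{N_j}]|_S$ and $\varphi$. Relatedly, definable Chow needs a closed complex analytic definable subset, so even granting your cell-decomposition step you would still owe both complex analyticity and closedness before invoking it. The ``in particular'' clause as you sketch it is fine once Zariski closedness is known.
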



The proofs of Theorem~\ref{ThmBettiRankFormulaMain} and Theorem~\ref{ThmZariskiClosedDegLoci} are simultaneous and follow the guideline of the first-named author's \cite{GaoBettiRank} on the case when $\sJ(\BV_{\BZ})$ is polarizable. 
 A key ingredient for our proof is the mixed Ax--Schanuel theorem 
 for admissible VMHS   independently proved by Chiu \cite{KennethChiuAS} and Gao--Klingler \cite{GKAS}. 
We also invoke  \cite{BBTmixed} on the algebraicity of $\varphi(S)$, whose proof builds up on o-minimal GAGA, 
to ease the notation for the proof. 
 Another crucial input for the proof is the o-minimal structure associated with the period map \cite{BBKT}, which we recalled in $\S$\ref{SubsectionOminimality}.  This allows us to apply (o-minimal) definable Chow.


\subsection{Replacing $S$ by $\varphi(S)$}\label{SubsectionReplacingSbyImage}
We shall replace $S$ by $\varphi(S)$ in our proof using \cite{BBTmixed}. This largely eases the notation.

By the main result of \cite{BBTmixed}, the period map $\varphi = \varphi_{\nu} \colon S \rightarrow \Gamma\backslash \sD$ factors as $S \rightarrow S' \xrightarrow{\iota} \Gamma\backslash\sD$, with $S \rightarrow S'$ a dominant morphism between algebraic varieties and $\iota$ an immersion in the category of complex varieties. Then $\iota$ induces an integral admissible VMHS on $S'$ for which $\iota$ is the period map. By abuse of notation, we use $\varphi\colon S \rightarrow S'$ and see $\iota$ as an inclusion. We have the following diagram.
\[
\xymatrix{
& S' \ar@{^(->}[r] \ar@{-->}[rd] & \Gamma\backslash\sD \ar[d]^-{[p]} \\
S \ar[rr]^-{\varphi_0} \ar[ru]^-{\varphi} & & \Gamma_0\backslash\sD_0
}
\]
with the dotted arrow being the restriction $[p]|_{S'}$. 

Recall from Remark~\ref{RmkSubTorusFib} that $\sJ_{\nu} := S\times_{\Gamma_0\backslash\sD_0}(\Gamma\backslash\sD)$ is an intermediate Jacobian over $S$. Set $\sJ' := S' \times_{\Gamma_0\backslash\sD_0}(\Gamma\backslash\sD)$; it is an intermediate Jacobian over $S'$. Then the inclusion $S' \subseteq \Gamma\backslash\sD$ yields a section $\nu'$ of $\sJ' \rightarrow S'$, and thus we can define $S^{\prime \mathrm{Betti}}(t)$ for each $t\ge 0$ with respect to $\nu'$. We have the following commutative diagram, with $\varphi_{\sJ}$ induced by $\varphi$, such that $\varphi_{\sJ} \circ \nu = \nu' \circ \varphi$:
\[
\xymatrix{
\sJ_{\nu} \ar[r]^-{\varphi_{\sJ}} \ar[d] & \sJ' \ar[d] \\
S \ar[r]^-{\varphi} \ar@/^1pc/[u]^-{\nu} & S' \ar@/_1pc/[u]_-{\nu'}.
}
\]

For each $r \ge 0$, denote by 
\begin{equation}\label{EqPeriodMapLargeFiberPart}
S_{\ge r} := \{s \in S(\BC) : \dim_s \varphi^{-1}(\varphi(s)) \ge r\}.
\end{equation}
It is a closed algebraic subset of $S$ by upper semi-continuity.

By (the proof of) Lemma~\ref{LemmaBettiFoliationBettiRank}, more precisely the second equality of \eqref{EqBettiMapBettiFoliation1}, we have
\begin{equation}\label{EqSCFSprimeCF}
S^{\mathrm{Betti}}(t) = S_{\ge t} \cup  \bigcup\nolimits_{0\le r \le t-1} S_{\ge r} \cap \varphi^{-1}\left(S^{\prime \mathrm{Betti}}(t-r)\right).
\end{equation}
This equality allows us to replace $S$ by $\varphi(S)$ to study the Betti rank and the Betti strata.

\subsection{Bi-algebraic system and Ax--Schanuel}

From now on, in the whole section, we replace $S$ by $\varphi(S)$ and view $S$ as an algebraic subvariety of the complex analytic space $\Gamma\backslash\sD$, unless otherwise stated.

Let $\wt{Y} \subseteq \sD$ (resp.  $Y \subseteq S$) be a complex analytic irreducible subset (resp. an irreducible subvariety).
\begin{defn}
\begin{enumerate}
\item[(i)]  The \textbf{weakly special closure} of $\wt{Y}$, denoted by $\wt{Y}^{\mathrm{ws}}$, is the smallest weak Mumford--Tate domain in $\sD$ which contains $\wt{Y}$.
\item[(ii)] The \textbf{weakly special closure} of $Y$, denoted by  $ Y^{\mathrm{ws}}$, is $u(\widetilde{Y}^{\mathrm{ws}})$ for one (hence any) complex analytic irreducible component $\widetilde{Y}$ of $u^{-1}(Y)$.
\end{enumerate}
\end{defn}

The following Ax--Schanuel theorem for VMHS was independently proved by Chiu \cite{KennethChiuAS} and Gao--Klingler \cite{GKAS}.

\begin{thm}[weak Ax--Schanuel for VMHS]\label{ThmAS}
Let $\wt{Z} \subseteq u^{-1}(S)$ be a complex analytic irreducible subset. Then
\begin{equation}\label{EqWAS}
\dim \wt{Z}^{\mathrm{Zar}} + \dim u(\wt{Z})^{\mathrm{Zar}}  \ge \dim  \wt{Z}^{\mathrm{ws}} + \dim \wt{Z},
\end{equation}
where $\wt{Z}^{\mathrm{ws}}$ is the smallest weak Mumford--Tate domain which contains $\wt{Z}$.
\end{thm}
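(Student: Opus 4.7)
The plan is to follow the atypical-intersection paradigm for period maps and reduce the mixed statement to the pure Ax--Schanuel of Bakker--Tsimerman combined with a fiberwise statement in the unipotent direction, in the spirit of Mok--Pila--Tsimerman for abelian schemes. Throughout I would freely use the o-minimal infrastructure recalled in $\S$\ref{SubsectionOminimality}: the existence of a semi-algebraic fundamental set $\mathfrak F \subseteq \sD$ such that $\wt\varphi$ is $\BR_{\mathrm{an,exp}}$-definable, so that o-minimal definable Chow is available to pass between complex-analytic irreducible components and algebraic subvarieties.

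First I would pass to the graph. Let $\Gamma_{\wt\varphi} \subseteq \wt S\times \sD$ denote the image of $\wt{\mathrm{id}\times \varphi}$ inside the product, so that $\wt Z$ corresponds to an analytic component of an intersection $W \cap \Gamma_{\wt\varphi}$ with $W = \wt Z^{\mathrm{Zar}}\times \sD^\vee\cap (\wt S\times \wt Z^{\mathrm{ws}})$. The inequality \eqref{EqWAS} is then equivalent to the claim that if $\wt Z$ is Zariski-atypical with respect to $\wt Z^{\mathrm{ws}}$, i.e.\
\[
\dim \wt Z^{\mathrm{Zar}} + \dim u(\wt Z)^{\mathrm{Zar}} - \dim \wt Z < \dim\wt Z^{\mathrm{ws}},
\]
then $\wt Z^{\mathrm{ws}}$ already forces a proper weak Mumford--Tate subdomain. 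Without loss of generality we may replace $\wt Z$ by a maximal such atypical component; this is where definable Chow enters, as it guarantees that the atypical locus cut out inside the definable graph is still a finite union of algebraic pieces.

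Second I would exploit the fibration $p \colon \sD = V(\BR)\times \sD_0 \rightarrow \sD_0$ from \eqref{EqClassfyingSpaceDecomposition} and Lemma~\ref{LemmaStrucMTDomain}. Projecting $\wt Z$ yields $\wt Z_0 := p(\wt Z)\subseteq \sD_0$ and the induced analytic set in $\wt S$ maps under $\wt\varphi_0$ to $\wt Z_0$. By the pure Ax--Schanuel (Bakker--Tsimerman) applied to the polarized VHS of weight $-1$,
\[
\dim \wt Z_0^{\mathrm{Zar}} + \dim u_0(\wt Z_0)^{\mathrm{Zar}} \ge \dim \wt Z_0^{\mathrm{ws}} + \dim \wt Z_0,
\]
with $\wt Z_0^{\mathrm{ws}}$ the smallest weak Mumford--Tate domain in $\sD_0$ containing $\wt Z_0$. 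Note that by Corollary~\ref{CorZariskiClosureBettiFiber} any weak Mumford--Tate subdomain of $\sD$ has the shape $(V'(\BR)+a)\times \wt Z_0^{\mathrm{ws}}$ for some $\BQ$-subspace $V' \subseteq V$ and some translation $a$, so bounding $\dim \wt Z^{\mathrm{ws}}$ reduces to bounding $\dim V'$ in addition to the pure term.

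Third I would analyze the unipotent direction. Over a generic fiber of $p|_{\wt Z}$ the set $\wt Z$ intersects $V(\BR)\times\{x_0\}$ in an analytic piece whose Zariski closure controls $V'$. Because the normal function is an extension class, the restriction of $\wt\varphi$ to such a fiber is of the same nature as an abelian-scheme period map with base $\wt Z_0$, and one may invoke (or replay the proof of) the Ax--Schanuel for abelian schemes of Mok--Pila--Tsimerman — or alternatively, the dual/unipotent version of mixed Ax--Schanuel for extensions — to obtain the analogue of \eqref{EqWAS} along the fiber, relative to the pure base. Combining this relative inequality with the pure inequality from the previous paragraph, and tracking the cocycle twist $v_0$ of Lemma~\ref{LemmaStrucMTDomain} (which contributes only a translation and so is dimensionally trivial), recovers \eqref{EqWAS}.

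The main obstacle I expect is the non-uniformity in the unipotent direction: as $x_0$ varies over non-Hodge-generic points of $\sD_0$, the $\BQ$-subgroup $V_x = V\cap \MT(x)$ may jump, so a naïve fiber-by-fiber application of the abelian Ax--Schanuel need not glue into a global statement. This is the step where o-minimality and definable Chow are essential: one uses them to replace the plainly analytic unipotent fibers by definable families whose atypical intersections can be controlled uniformly, and then one invokes the algebraicity of $\varphi(S)$ from \cite{BBTmixed} (as in $\S$\ref{SubsectionReplacingSbyImage}) to close up the argument.
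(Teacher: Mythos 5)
This theorem is not proved in the paper at all: it is quoted as an external input, independently due to Chiu \cite{KennethChiuAS} and Gao--Klingler \cite{GKAS}, and the paper's later arguments simply invoke it (together with the definability results of \cite{BBKT} and the algebraicity of images from \cite{BBTmixed}). So the relevant question is whether your sketch would actually constitute a proof of the mixed Ax--Schanuel statement, and as written it does not.

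The genuine gap is your third step. You propose to apply pure Ax--Schanuel (Bakker--Tsimerman) to the projection $\wt Z_0 = p(\wt Z) \subseteq \sD_0$ and then an ``abelian-scheme-type'' Ax--Schanuel along the fibers $V(\BR)\times\{x_0\}$, and to add the two inequalities. But Ax--Schanuel-type estimates do not compose along a fibration in this way: the quantities $\dim \wt Z^{\mathrm{Zar}}$, $\dim \wt Z^{\mathrm{ws}}$ and $\dim \wt Z$ for the mixed object are not sums of the corresponding quantities for $\wt Z_0$ and for a generic fiber of $p|_{\wt Z}$ (neither the Zariski closure nor the weakly special closure of $\wt Z$ is a product, and the defect can be concentrated entirely in the interaction between the two directions). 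Even in the much more special Kuga setting, where the fibers really are abelian varieties, the reduction of mixed Ax--Schanuel to the pure case is not a fiberwise application of Mok--Pila--Tsimerman plus gluing; it requires a family (relative) version of the statement and a careful analysis of weakly special subvarieties, and the general VMHS case proved by Chiu and Gao--Klingler proceeds instead by running the Bakker--Tsimerman strategy directly on the mixed period domain (definable fundamental sets as in \cite{BBKT}, volume/point-counting estimates on the family of weak Mumford--Tate domains, and an induction on atypicality). Moreover, the fibers of $\sJ(\BV_\BZ)\to S$ here are in general non-polarizable compact complex tori subject to Griffiths transversality, so ``invoke or replay the proof of'' the abelian-scheme Ax--Schanuel is not available off the shelf; this is precisely the content that the cited papers supply. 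Your acknowledged obstacle (jumping of $V_x=V\cap \MT(x)$ over non-Hodge-generic $x_0$) is real, but appealing to ``o-minimality and definable Chow'' does not by itself produce the uniform statement you need: definable Chow converts definable analytic sets into algebraic ones, it does not convert a fiberwise transcendence inequality into a global one. Finally, your reformulation in the first step (``atypicality forces a proper weak Mumford--Tate subdomain'') is essentially a restatement of the conclusion, since $\wt Z^{\mathrm{ws}}$ is by definition the smallest weak Mumford--Tate domain containing $\wt Z$; no reduction has actually been achieved there. For the purposes of this paper the correct move is the one the authors make: cite \cite{KennethChiuAS} and \cite{GKAS} for Theorem~\ref{ThmAS} rather than attempt to re-derive it from the pure and abelian cases.
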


We close this introductory subsection with the following definition. In practice, we often need to work with algebraic subvarieties $Y\subseteq S$, which are not weak Mumford--Tate domains, and the following number measures how far it is from being one.
\begin{equation}
\delta_{\mathrm{ws}}(Y) := \dim  Y^{\mathrm{ws}} - \dim Y.
\end{equation}
If we do not replace $S$ by $\varphi(S)$, then each $Y$ on the right-hand side should be replaced by $\varphi(Y)$.

\begin{defn}\label{DefnWOP}
An irreducible algebraic subvariety $Y$ of $S$ is called {\em weakly optimal} if the following holds: $Y \subsetneq Y' \subseteq S \Rightarrow \delta_{\mathrm{ws}}(Y) < \delta_{\mathrm{ws}}(Y')$, for any $Y' \subseteq S$ irreducible.
\end{defn}

\subsection{Applications of Ax--Schanuel}
Retain the notation in \eqref{EqDiagramPeriodMapComplete}. We start with the following application of mixed Ax--Schanuel.
\begin{prop}\label{PropBettiRankNonMaxAndDegLocus}
For each $t \ge 0$,  
$S^{\mathrm{Betti}}(t)$
is contained in the union of weakly optimal subvarieties $Y \subseteq S$ satisfying
\begin{equation}\label{EqDegenerateMemberUnion}
\dim Y \ge \dim Y^{\mathrm{ws}} -  \dim [p](Y^{\mathrm{ws}}) +t.
\end{equation}
\end{prop}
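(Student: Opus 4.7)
The plan is to adapt the strategy of \cite{GaoBettiRank} from the pure abelian setting to the current mixed VMHS context, with the weak mixed Ax--Schanuel theorem (Theorem~\ref{ThmAS}) as the key new ingredient.

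First I would reduce to the case $S=\varphi(S)\subseteq \Gamma\backslash\sD$ via $\mathsection$\ref{SubsectionReplacingSbyImage} and the decomposition \eqref{EqSCFSprimeCF}. After this reduction $\wt\varphi$ is an immersion with discrete fibers, so $r=0$ in the second equality of \eqref{EqBettiMapBettiFoliation1}. For each $s\in S^{\mathrm{Betti}}(t)$, Lemma~\ref{LemmaBettiFoliationBettiRank} then produces a lift $\wt s\in u_S^{-1}(s)$ and a complex analytic irreducible subvariety $\wt Z\subseteq \wt S$ through $\wt s$ with $\dim \wt Z\ge t$, lying in the horizontal slice $\{\wt s_V\}\times \sD_0$; in particular $p|_{\wt Z}$ is injective with $\dim p(\wt Z)=\dim \wt Z\ge t$.

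Applying Theorem~\ref{ThmAS} to $\wt Z$ gives
\[
\dim \wt Z^{\mathrm{Zar}}+\dim Y_0\ge \dim \wt Z^{\mathrm{ws}}+t,\qquad Y_0:=u_S(\wt Z)^{\mathrm{Zar}}\subseteq S.
\]
I then enlarge $Y_0$ to a weakly optimal $Y\supseteq Y_0$ (hence $Y\ni s$) by iteratively replacing $Y_0$ with a proper superset $Y_0'\supsetneq Y_0$ satisfying $\delta_{\mathrm{ws}}(Y_0')\le \delta_{\mathrm{ws}}(Y_0)$; this process terminates by dimension. It remains to establish the dimension comparison
\[
\dim Y^{\mathrm{ws}}-\dim \wt Z^{\mathrm{ws}}+\dim \wt Z^{\mathrm{Zar}}\le \dim [p](Y^{\mathrm{ws}}),
\]
which combined with Ax--Schanuel yields $\delta_{\mathrm{ws}}(Y)\le \delta_{\mathrm{ws}}(Y_0)\le \dim[p](Y_0^{\mathrm{ws}})-t\le \dim[p](Y^{\mathrm{ws}})-t$, the desired inequality after using monotonicity of $\dim[p]((\cdot)^{\mathrm{ws}})$ under inclusion.

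For the comparison, let $\wt Y$ be a complex analytic irreducible component of $u_S^{-1}(Y)$ containing $\wt Z$, with generic Mumford--Tate groups $N_Z\subseteq N_Y$ of $\wt Z^{\mathrm{ws}}\subseteq \wt Y^{\mathrm{ws}}$. Lemma~\ref{LemmaStrucMTDomain} and the Levi decomposition give $\dim Y^{\mathrm{ws}}-\dim [p](Y^{\mathrm{ws}})=\tfrac{1}{2}\dim_{\BQ}(V\cap N_Y)$ and $\dim \wt Z^{\mathrm{ws}}-\dim p(\wt Z^{\mathrm{ws}})=\tfrac{1}{2}\dim_{\BQ}(V\cap N_Z)$, reducing the comparison to a bound on the ``$V$-spread'' $\dim\wt Z^{\mathrm{Zar}}-\dim p(\wt Z)^{\mathrm{Zar}}$ of the Zariski closure in terms of $\tfrac{1}{2}(\dim_{\BQ}(V\cap N_Y)-\dim_{\BQ}(V\cap N_Z))$. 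The main obstacle is this last step: since the horizontal slice $\{\wt s_V\}\times \sD_0$ is not complex-algebraic in $\sM^{\vee}$ (the complex structure being the twisted one of Proposition~\ref{PropStruMT}(ii)), controlling the $V$-direction spread of $\wt Z^{\mathrm{Zar}}$ requires careful interplay between the algebraic projection $p^{\vee}\colon \sM^{\vee}\to \sM_0^{\vee}$, the semi-algebraic structure of weak Mumford--Tate domains, and the weak-optimality of $Y$; the o-minimal setup of $\mathsection$\ref{SubsectionOminimality} together with definable Chow should be useful here.
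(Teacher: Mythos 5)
Your strategy is the same as the paper's (reduce to $\varphi(S)$, use Lemma~\ref{LemmaBettiFoliationBettiRank} to extract a complex analytic irreducible $\wt{Z}$ of dimension $\ge t$ inside a slice $\{a\}\times\sD_0$ of $u^{-1}(S)$, apply Theorem~\ref{ThmAS}, then pass to a weakly optimal enlargement as in the paper's step (ii)), but the argument is left open exactly at the step you yourself flag as ``the main obstacle'', and that step is where all the substance of the proof lies. Concretely, what is needed (and what your chain of inequalities actually requires) is the comparison for $Y_0=\overline{u(\wt{Z})}^{\mathrm{Zar}}$, namely $\dim Y_0^{\mathrm{ws}}-\dim\wt{Z}^{\mathrm{ws}}+\dim\wt{Z}^{\mathrm{Zar}}\le \dim[p](Y_0^{\mathrm{ws}})$; stating it for the weakly optimal enlargement $Y$ is both not what the middle inequality $\delta_{\mathrm{ws}}(Y_0)\le \dim[p](Y_0^{\mathrm{ws}})-t$ needs and potentially false, since $Y^{\mathrm{ws}}$ can have strictly more $V$-directions than $Y_0^{\mathrm{ws}}$.

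The comparison for $Y_0$ is proved in the paper by two inputs you do not carry out. First, $\wt{Z}^{\mathrm{Zar}}=\{a\}\times\overline{\wt{C}}^{\mathrm{Zar}}$ with $\wt{C}=p(\wt{Z})$: this is Corollary~\ref{CorZariskiClosureBettiFiber}, and your stated reason for the difficulty is a misconception here. In the two-adjacent-weights ($-1$ and $0$) situation the real splitting (Proposition~\ref{PropStruMT} and the discussion preceding it) makes the slice $\{a\}\times\sD_0$ a complex submanifold which is also semi-algebraic, hence irreducible algebraic in the bi-algebraic sense (Definition~\ref{DefnAlgMTDomain}) relevant to Theorem~\ref{ThmAS}; so the ``$V$-spread'' of $\wt{Z}^{\mathrm{Zar}}$ is exactly zero, and no delicate bound in terms of $\frac{1}{2}\bigl(\dim_{\BQ}(V\cap N_Y)-\dim_{\BQ}(V\cap N_Z)\bigr)$ is needed. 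Second, one must identify $Y_0^{\mathrm{ws}}=u(\wt{Z}^{\mathrm{ws}})$ and hence $\dim Y_0^{\mathrm{ws}}=\dim\wt{Z}^{\mathrm{ws}}$ and $\dim[p](Y_0^{\mathrm{ws}})=\dim p(\wt{Z}^{\mathrm{ws}})\ge\dim\overline{\wt{C}}^{\mathrm{Zar}}=\dim\wt{Z}^{\mathrm{Zar}}$; the nontrivial inclusion $Y_0\subseteq u(\wt{Z}^{\mathrm{ws}})$ uses that images under $u$ of weak Mumford--Tate domains are closed algebraic, i.e.\ \cite[Cor.~6.7]{BBKT} via the o-minimal setup of $\mathsection$\ref{SubsectionOminimality} and definable Chow --- you gesture at this but do not execute it. With these two facts the comparison, and hence the proposition, follows at once; the remaining parts of your proposal (the reduction via \eqref{EqSCFSprimeCF}, the application of Ax--Schanuel, and the passage to a weakly optimal $Y$ using monotonicity of $\dim[p]\bigl((\cdot)^{\mathrm{ws}}\bigr)$) are sound and parallel the paper's steps (i) and (ii).
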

\begin{proof}
It suffices to prove two things:
\begin{enumerate}
\item[(i)] $S^{\mathrm{Betti}}(t)$ is covered by the union of irreducible subvarieties $Y \subseteq S$ satisfying \eqref{EqDegenerateMemberUnion} (without requiring $Y$ to be weakly optimal);
\item[(ii)] If $Y \subseteq S$ is an irreducible subvariety satisfying \eqref{EqDegenerateMemberUnion} and is maximal for this property with respect to inclusions, then $Y$ is weakly optimal.
\end{enumerate}

Let us prove (i). By Lemma~\ref{LemmaBettiFoliationBettiRank}, more precisely the first equality of \eqref{EqBettiMapBettiFoliation1}, $S^{\mathrm{Betti}}(t) $ is covered by irreducible subvarieties $Y \subseteq S$ such that
\[
Y:= \overline{u(\{a\} \times \wt{C})}^{\mathrm{Zar}},
\]
 for some complex analytic irreducible $\wt{C} \subseteq \sD_0$ with $\dim \wt{C} = t$ and some $a \in V(\BR)$.



Apply mixed Ax--Schanuel in this context (Theorem~\ref{ThmAS} to $\{a\} \times \wt{C}$). Then we get
\[
\dim \overline{\{a\} \times \wt{C}}^{\mathrm{Zar}} + \dim Y\ge \dim ( \{a\} \times \wt{C} )^{\mathrm{ws}} + t.
\]
By Lemma~\ref{CorZariskiClosureBettiFiber}, $\overline{\{a\} \times \wt{C}}^{\mathrm{Zar}} = \{a\} \times \overline{\wt{C}}^{\mathrm{Zar}}$. Hence
\[
\dim Y  \ge \dim ( \{a\} \times \wt{C})^{\mathrm{ws}} -  \dim \overline{\wt{C}}^{\mathrm{Zar}} + t  \ge  \dim  ( \{a\} \times \wt{C} )^{\mathrm{ws}}  -  \dim \wt{C}^{\mathrm{ws}} + t .
\]
The last inequality holds because $\overline{\wt{C}}^{\mathrm{Zar}} \subseteq \wt{C}^{\mathrm{ws}}$. So
\[
\dim Y  \ge \dim  ( \{a\} \times \wt{C} )^{\mathrm{ws}}  -  \dim  \wt{C}^{\mathrm{ws}} +t.
\]
Now, to prove (i), it suffices to prove $\dim   ( \{a\} \times \wt{C} )^{\mathrm{ws}}  = \dim   Y^{\mathrm{ws}}$ and $\dim  \wt{C}^{\mathrm{ws}} = \dim [p](  Y^{\mathrm{ws}})$.

Let us prove $u( ( \{a\} \times \wt{C} )^{\mathrm{ws}} ) =  Y^{\mathrm{ws}}$; the upshot is $\dim  ( \{a\} \times \wt{C} )^{\mathrm{ws}}  = \dim   Y^{\mathrm{ws}}$. By definition of $Y$, we have $u(\{a\} \times \wt{C}) \subseteq Y$. Hence $u( ( \{a\} \times \wt{C} )^{\mathrm{ws}}  ) \subseteq  Y^{\mathrm{ws}}$. On the other hand, $u( ( \{a\} \times \wt{C} )^{\mathrm{ws}}  )$ is closed algebraic by \cite[Cor.~6.7]{BBKT} (which is a consequence of the o-minimal setup explained in $\mathsection$\ref{SubsectionOminimality} and definable Chow). So $Y \subseteq u( ( \{a\} \times \wt{C} )^{\mathrm{ws}}  )$. So $ Y^{\mathrm{ws}} \subseteq u( ( \{a\} \times \wt{C} )^{\mathrm{ws}} )$. Now we have established $u( ( \{a\} \times \wt{C} )^{\mathrm{ws}} ) =  Y^{\mathrm{ws}}$.

Similarly, we have $\dim  \wt{C}^{\mathrm{ws}} = \dim [p](  Y^{\mathrm{ws}})$. Hence, we are done for (i). 

For (ii), let $Y \subseteq Y' \subseteq S$. Assume $\delta_{\mathrm{ws}}(Y) \ge \delta_{\mathrm{ws}}(Y')$, \textit{i.e.} 
\[
\dim  Y^{\mathrm{ws}} - \dim Y \ge \dim  Y^{\prime,\mathrm{ws}} - \dim Y'.
\]
The assumption on $Y$ implies $\dim  Y^{\mathrm{ws}} - \dim Y \le \dim [p](   Y^{\mathrm{ws}} ) -t$. Combined with the inequality above, we obtain $\dim  Y^{\prime,\mathrm{ws}} - \dim Y' \le  \dim [p](   Y^{\prime,\mathrm{ws}} ) -t$ because $Y \subseteq Y'$. Therefore $Y = Y'$ by maximality of $Y$. So $Y$ is weakly optimal by definition. Hence, (ii) is established.
\end{proof}

Next we apply the following finiteness result called {\it Geometric Zilber--Pink}, proved by Baldi--Urbanik \cite[Thm.~7.1]{BaldiUrbanik}, in the case of weight $-1$ and $0$. This proposition itself is an application of mixed Ax--Schanuel. The blueprint was laid by Ullmo \cite{UllmoApp}, and the current version of the finiteness statement was proved by Daw--Ren \cite{DawRenAppOfAS} for pure Shimura varieties and by the first-named author \cite[Thm.~1.4]{GaoMixedAS} for mixed Shimura varieties of Kuga type, \textit{i.e.} when $\sJ(\BV_{\BZ})$ is polarizable. Passing from the Shimura case to variations of (mixed) Hodge structures, it will proved by Baldi--Klingler--Ullmo \cite[$\mathsection$6]{BaldiKlinglerUllmo} for the pure case before the final version for VMHS was proved by Baldi--Urbanik \cite[Thm.~7.1]{BaldiUrbanik}. We point out that weakly optimal subvarieties are called {\it monodromically atypical maximal} in  \cite{BaldiKlinglerUllmo} and \cite{BaldiUrbanik}.
\begin{prop}\label{PropFinitenessBogomolov}
There exist finitely many pairs $(\sD_1,N_1), \ldots, (\sD_k,N_k)$, with each $\sD_j$ a Mumford--Tate domain contained in $\sD$ and $N_j$ a normal subgroup of $\mathrm{MT}(\sD_j)$, such that the following holds. For each weakly optimal subvariety $Y \subseteq S$,  $ Y^{\mathrm{ws}}$ is the image of an $N_j(\BR)^+$-orbit contained in $\sD_j$ under $u \colon \sD \rightarrow \Gamma\backslash\sD$ for some $j \in \{1,\ldots,k\}$.
\end{prop}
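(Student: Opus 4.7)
The plan is to deduce this proposition from the ``Geometric Zilber--Pink'' theorem for admissible VMHS of weight $-1$ and $0$, namely \cite[Thm.~7.1]{BaldiUrbanik}, which is tailor-made for exactly this situation. First I would unpack the correspondence between the output of that theorem and the data $(\sD_j,N_j)$ appearing here: a weak Mumford--Tate domain in $\sD$ is, by Definition~\ref{DefnWSP}, an $N(\BR)^+$-orbit $N(\BR)^+x$ with $N \lhd \mathrm{MT}(x)$, which is the same as a pair $(\sD',N)$ where $\sD' = \mathrm{MT}(x)(\BR)^+ x$ is a Mumford--Tate subdomain of $\sD$ and $N \lhd \mathrm{MT}(\sD')$. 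Since $\mathrm{MT}(\sD')$ and $N$ are $\BQ$-algebraic subgroups of $\mathbf{G}^{\sM}$ and $\Gamma$ is arithmetic, the set of such pairs up to $\Gamma$-conjugacy is countable; the substance of the proposition is to upgrade ``countable'' to ``finite'' when one restricts attention to those pairs that arise as the weakly special closure of some weakly optimal subvariety of $S$.

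Next I would check that the notion of \emph{weakly optimal} from Definition~\ref{DefnWOP} matches the ``monodromically atypical maximal'' hypothesis of \cite[Thm.~7.1]{BaldiUrbanik}. The atypicality is built into Theorem~\ref{ThmAS}: for any irreducible $Y \subseteq S$ and any analytic component $\wt Y$ of $u^{-1}(Y)$, mixed Ax--Schanuel forces
\[
\dim Y^{\mathrm{ws}} - \dim Y \;\ge\; \dim\wt Y^{\mathrm{ws}} - \dim\overline{\wt Y}^{\mathrm{Zar}},
\]
so $\delta_{\mathrm{ws}}(Y)$ measures precisely the atypicality of the intersection of $S$ with $Y^{\mathrm{ws}}$, and weakly optimality is maximality of this defect under inclusion. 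Feeding this translation into \cite[Thm.~7.1]{BaldiUrbanik} directly yields the desired finite list of pairs $(\sD_j,N_j)$ such that every $Y^{\mathrm{ws}}$ for $Y$ weakly optimal is the $u$-image of an $N_j(\BR)^+$-orbit inside $\sD_j$.

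The main obstacle — which is internal to Baldi--Urbanik and which I would only sketch here — lies in the construction of a definable moduli of weakly special closures. One uses the o-minimal structure on the period map from \cite{BBKT}, recalled in $\S$\ref{SubsectionOminimality}, together with definable Chow (as already deployed in the proof of Proposition~\ref{PropBettiRankNonMaxAndDegLocus} via \cite[Cor.~6.7]{BBKT}) to parametrize candidate weakly special subvarieties by a $\BR_{\mathrm{an},\exp}$-definable family; iterative application of Theorem~\ref{ThmAS} combined with the maximality of the defect then cuts this family down to finitely many irreducible components, each corresponding to one pair $(\sD_j, N_j)$. The passage from the pure case treated in \cite{BaldiKlinglerUllmo} to the mixed VMHS case of weight $-1$ and $0$ requires careful bookkeeping of the Levi decomposition $\mathbf{G} = V \rtimes \mathbf{G}_0$ and of the fibered structure of Mumford--Tate domains from Lemma~\ref{LemmaStrucMTDomain}, because the defect must be controlled simultaneously on the pure quotient $\sD_0$ and on the unipotent fiber $V(\BR)$. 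Granting this finiteness statement from \cite{BaldiUrbanik}, the proof of the proposition is complete.
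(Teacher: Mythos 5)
Your proposal takes essentially the same route as the paper: the paper gives no independent proof of this proposition but simply quotes it as the Geometric Zilber--Pink theorem of Baldi--Urbanik \cite[Thm.~7.1]{BaldiUrbanik} for admissible VMHS (of weight $-1$ and $0$), noting that weakly optimal subvarieties are what those authors call monodromically atypical maximal, exactly the translation you perform. Your additional sketch of the internal definability/Ax--Schanuel argument is not needed (and your displayed inequality is only a trivial consequence of Theorem~\ref{ThmAS}, not a genuine verification of the dictionary), but since the substance in both cases is the citation, the proposal matches the paper's treatment.
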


Denote by $\Gamma_j = \Gamma \cap \mathrm{MT}(\sD_j)(\BQ)$ and $\Gamma_{j,/N_j} = \Gamma_j / (\Gamma_j \cap N_j(\BQ))$. Then equivalently, each such $ Y^{\mathrm{ws}}$ is a fiber of the quotient $[p_{N_j}] \colon u(\sD_j) = \Gamma_j \backslash\sD_j \rightarrow \Gamma_{j,/N_j}\backslash (\sD_j/N_j)$.  

In $\mathsection$\ref{SubsectionOminimality}, we endowed $\Gamma\backslash\sD$ with a semi-algebraic structure, and hence $\Gamma_j \backslash\sD_j$ with a semi-algebraic structure. We can endow $\Gamma_{j,/N_j}\backslash (\sD_j/N_j)$ with a semi-algebraic structure  in a similar way. Then $[p_{N_j}]$ is semi-algebraic because the quotient map $\sD_j \rightarrow \sD_j/N_j$ is.


\subsection{Proof of Theorem~\ref{ThmBettiRankFormulaMain}}
For each $j \in \{1,\ldots,k\}$, the set  $u(\sD_j)\cap S$ is a closed algebraic subset of $S$ by \cite[Cor.~6.7]{BBKT}; this is a consequence of the o-minimal setup explained in $\mathsection$\ref{SubsectionOminimality} and definable Chow.
 The restriction 
\[
[p_{N_j}]|_S \colon u(\sD_j) \cap S \rightarrow \Gamma_{j,/N_j}\backslash (\sD_j/N_j)
\]
is both complex analytic and  definable; see $\mathsection$\ref{SubsectionOminimality}.

For each $t \ge 0$, the subset
\begin{equation}\label{EqEjSet}
E_j(t) := \left\{ s\in u(\sD_j) \cap S : \dim_s [p_{N_j}]|_S^{-1}([p_{N_j}](s)) \ge \frac{1}{2}\dim (V \cap N_j) +t \right\}
\end{equation}
is both definable and complex analytic in $u(\sD_j)\cap S$. Hence, $E_j(t)$ is algebraic by definable Chow. Moreover, it is closed in $u(\sD_j)\cap S$ by the upper semi-continuity of fiber dimensions. So $E_j(t)$ is a closed algebraic subset of $S$.



\begin{prop}\label{PropFoliationDegLociSubset}
For each $t \ge 0$, we have
\[
S^{\mathrm{Betti}}(t) \subseteq \bigcup\nolimits_{j=1}^k E_j(t).
\]
\end{prop}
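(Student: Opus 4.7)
The plan is to chase a point $s\in S^{\mathrm{Betti}}(t)$ through Propositions~\ref{PropBettiRankNonMaxAndDegLocus} and \ref{PropFinitenessBogomolov}, and then reinterpret the ensuing dimension inequality in terms of the fibration $[p_{N_j}]$. Concretely, Proposition~\ref{PropBettiRankNonMaxAndDegLocus} produces a weakly optimal subvariety $Y\subseteq S$ containing $s$ with
\[
\dim Y \ge \dim  Y^{\mathrm{ws}}-\dim [p]( Y^{\mathrm{ws}})+t,
\]
and Proposition~\ref{PropFinitenessBogomolov} yields an index $j\in\{1,\dots,k\}$ for which $ Y^{\mathrm{ws}}$ is the $u$-image of an $N_j(\BR)^+$-orbit inside $\sD_j$. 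In particular $s\in Y\subseteq u(\sD_j)\cap S$, and $ Y^{\mathrm{ws}}$ is (a component of) a fiber of $[p_{N_j}]\colon u(\sD_j)\to \Gamma_{j,/N_j}\backslash(\sD_j/N_j)$, so $Y$ lies inside the fiber of $[p_{N_j}]|_S$ through $s$ and therefore
\[
\dim_s[p_{N_j}]|_S^{-1}([p_{N_j}](s))\ge \dim Y.
\]

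The heart of the argument is then the identity
\[
\dim  Y^{\mathrm{ws}}-\dim [p]( Y^{\mathrm{ws}})=\frac{1}{2}\dim_\BQ(V\cap N_j),
\]
which I would establish from the structural results of $\S$\ref{SectionSummaryClassifyingSpaceMTDomain}. Since all mixed Hodge structures in play have only weights $-1$ and $0$, the weight filtration on $\mathbf{G}_j=\MT(\sD_j)$ gives $W_{-1}\mathbf{G}_j = \mathbf{G}_j\cap V_0 = \mathbf{G}_j\cap V$, and this is exactly the unipotent radical $V_j$ of $\mathbf{G}_j$; hence $V\cap N_j = V_j\cap N_j$. Lemma~\ref{LemmaStrucMTDomain} then furnishes a product decomposition $\sD_j\cong V_j(\BR)\times\sD_{j,0}$ under which $p|_{\sD_j}$ is projection onto the second factor, and because the $N_j(\BR)^+$-action respects the Levi decomposition $\mathbf{G}_j=V_j\rtimes\mathbf{G}_{j,0}$, a generic fiber of $p|_{ Y^{\mathrm{ws}}}$ is a $(V_j\cap N_j)(\BR)$-orbit of complex dimension $\frac{1}{2}\dim_\BQ(V_j\cap N_j)$; comparing dimensions yields the identity.

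Combining the displayed inequality and identity gives $\dim Y\ge \frac{1}{2}\dim(V\cap N_j)+t$, and hence
\[
\dim_s[p_{N_j}]|_S^{-1}([p_{N_j}](s))\ge \frac{1}{2}\dim(V\cap N_j)+t,
\]
which is precisely the defining condition for $s\in E_j(t)$. The main technical obstacle I anticipate is the fiber-dimension identity above: identifying $V\cap \mathbf{G}_j$ with $V_j$ via the two-step weight filtration, and then carefully tracking how an $N_j(\BR)^+$-orbit meets the $V_j(\BR)$-fibers of the Levi product decomposition from Lemma~\ref{LemmaStrucMTDomain}. Once this identity is in place, the rest of the argument is just a comparison of fiber dimensions.
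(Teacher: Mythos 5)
Your argument is correct and follows essentially the same route as the paper: cover $S^{\mathrm{Betti}}(t)$ by weakly optimal $Y$ via Proposition~\ref{PropBettiRankNonMaxAndDegLocus}, identify $Y^{\mathrm{ws}}$ with a fiber of $[p_{N_j}]$ via Proposition~\ref{PropFinitenessBogomolov}, use $\dim Y^{\mathrm{ws}}-\dim[p](Y^{\mathrm{ws}})=\frac{1}{2}\dim_{\BQ}(V\cap N_j)$, and conclude $s\in E_j(t)$ since $[p_{N_j}]$ is constant on $Y$. The only difference is that you spell out the fiber-dimension identity via Lemma~\ref{LemmaStrucMTDomain} and the Levi decomposition, which the paper asserts directly here (and carries out in the same way in the proof of Lemma~\ref{LemmaEjInDegLocus}).
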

\begin{proof}
%
%
%
Let $t \ge 0$. 
By Proposition~\ref{PropBettiRankNonMaxAndDegLocus}, $S^{\mathrm{Betti}}(t)$ 
 is covered by weakly optimal $Y \subseteq S$ such that 
 $\dim Y \ge  \dim  Y^{\mathrm{ws}} -  \dim [p]( Y^{\mathrm{ws}}) +t$.  Then by Proposition~\ref{PropFinitenessBogomolov}, $ Y^{\mathrm{ws}}$ is a fiber $[p_{N_j}]$ for some $j \in \{1,\ldots,k\}$, and hence $\dim  Y^{\mathrm{ws}} - \dim [p]( Y^{\mathrm{ws}}) = \frac{1}{2}\dim (V \cap N_j)$. So $\dim Y \ge  \frac{1}{2}\dim (V \cap N_j)+t$. So $Y \subseteq E_j(t)$ because $[p_{N_j}](Y)$ is a point.
\end{proof}

Now we are ready to prove Theorem~\ref{ThmBettiRankFormulaMain}.
\begin{proof}[Proof of Theorem~\ref{ThmBettiRankFormulaMain}]
In this proof, we go back to our original setting and do not replace $S$ by $\varphi(S)$. We have $S \xrightarrow{\varphi} S' \subseteq \Gamma\backslash\sD$ with $S' = \varphi(S)$ an algebraic subvariety of $\Gamma\backslash\sD$.

Let us prove ``$\le$''. By \eqref{EqBettiMapBettiFoliation2} we have $r(\nu) = \max_{\wt{s} \in\wt{S}} (\mathrm{d}\wt{\varphi}_V)_{\wt{s}}$. Hence $r(\nu) \le \dim \varphi_{/N}(S) + \frac{1}{2}\dim(V\cap N)$ for any normal subgroup $N \lhd \mathbf{G}$.

Let us prove ``$\ge$''.  Let $t = \dim S - r(\nu)$. Then $S^{\mathrm{Betti}}(t)$ contains a non-empty open subset of $S^{\mathrm{an}}$.
By \eqref{EqSCFSprimeCF} and Proposition~\ref{PropFoliationDegLociSubset} (which should be applied to $S^{\prime \mathrm{Betti}}(t-r)$ for each $0\le r \le t-1$), we have 
\[
S^{\mathrm{Betti}}(t) \subseteq S_{\ge t} \cup \bigcup\nolimits_{0\le r\le t-1,~ 1\le j \le k} S_{\ge r} \cap \varphi^{-1}\left(E_j(t-r)\right).
\]
Each $S_{\ge r}$ is Zariski closed in $S$, and each $E_j(t-r)$ is Zariski closed in $S' = \varphi(S)$. Hence, each member in the union on the right-hand side is  Zariski closed in $S$. Taking the Zariski closure of both sides, we then have $S$ equal to a member on the right-hand side.

If $S = S_{\ge t}$, then ``$\ge$'' holds already for $N=\{1\}$. 

Assume $S = S_{\ge r} \cap \varphi^{-1}\left(E_j(t-r)\right)$ for some $0\le r \le t-1$ and some $j$. Then $S= S_{\ge r} =\varphi^{-1}\left(E_j(t-r)\right)$, So each fiber of $\varphi$ has dimension $\ge r$, and $S' = E_j(t-r)$. Moreover, 
$\mathrm{MT}(\sD_j)= \mathbf{G}$ because $S' = \varphi(S)$ is Hodge generic in $\Gamma\backslash\sD$.  
Set $N = N_j$. 
 Each fiber of the map
\[
\varphi_{ /N} \colon S \xrightarrow{\varphi} S' \subseteq \Gamma\backslash\sD \xrightarrow{[p_N]} \Gamma_{/N}\backslash (\sD/N),
\]
has $\BC$-dimension $\ge r + \left( \frac{1}{2}\dim(V\cap N) +(t-r) \right) =  \frac{1}{2}\dim(V\cap N) +t$ by definition of $E_j(t-r)$. So
\[
r(\nu) = \dim S - t \ge \left( \dim \varphi_{/N}(S) + \frac{1}{2}\dim(V\cap N) +t \right) - t = \dim \varphi_{/N}(S) + \frac{1}{2}\dim(V\cap N).
\]
So, ``$\ge$'' is established.
\end{proof}


\subsection{Zariski closedness of the degeneracy loci}
We start with the following lemma, which is the converse of Proposition~\ref{PropFoliationDegLociSubset}.
\begin{lemma}\label{LemmaEjInDegLocus}
For each $t \ge 0$ and each $j \in \{1,\ldots,k\}$, we have $E_j(t) \subseteq S^{\mathrm{Betti}}(t)$.
\end{lemma}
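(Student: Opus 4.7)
The plan is to take $s\in E_j(t)$, unpack the fiber-dimension condition defining $E_j(t)$, and turn it into the complex-analytic containment appearing in the second equality of \eqref{EqBettiMapBettiFoliation1}. More precisely, by the definition of $E_j(t)$, there is an irreducible component $Y$ of $[p_{N_j}]|_S^{-1}([p_{N_j}](s))$ through $s$ with $\dim Y \ge \tfrac12\dim_{\BQ}(V\cap N_j)+t$. Fix a lift $\wt s \in \sD$ of $s$ and let $\sD_{N_j}:= N_j(\BR)^+\cdot\wt s \subseteq \sD$ be the corresponding weak Mumford--Tate subdomain; then $Y$ lifts to a complex analytic $\wt Y\subseteq \sD_{N_j}\cap \wt S$ through $\wt s$ with the same complex dimension.

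Next I would use the fibered structure of $\sD_{N_j}$ over $p(\sD_{N_j})$ supplied by Lemma~\ref{LemmaStrucMTDomain}: the holomorphic projection $p\colon \sD_{N_j}\to p(\sD_{N_j})\subseteq \sD_0$ has fibers of complex dimension $\tfrac12\dim_{\BQ}(V\cap N_j)$. Consequently
\[
\dim_{\BC}p(\wt Y) \ge \dim_{\BC}\wt Y - \tfrac12\dim_{\BQ}(V\cap N_j)\ge t,
\]
so $p(\wt Y)$ is the natural candidate for the complex analytic set $\wt C$ appearing in Lemma~\ref{LemmaBettiFoliationBettiRank}.

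The main work is then to verify $\{\wt s_V\}\times \wt C \subseteq \wt\varphi(\wt S)$ for a suitable complex analytic $\wt C\subseteq p(\wt Y)$ of complex dimension $\ge t$. The idea is that the Betti projection $\wt\varphi_V\colon\wt S\to V(\BR)$ restricted to $\sD_{N_j}$ factors through the translate $\wt s_V+(V\cap N_j)(\BR)$, which is of real dimension $\dim_{\BQ}(V\cap N_j)$. Upper semi-continuity of fiber dimension applied to the real-analytic map $\wt\varphi_V|_{\wt Y}$ then produces a Betti fiber of $\wt Y$ at $\wt s_V$ of real dimension at least $2\dim_{\BC}\wt Y - \dim_{\BQ}(V\cap N_j)\ge 2t$. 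Combining this real-dimension bound with the holomorphicity of $p|_{\wt Y}$ and the complex analytic structure of $\wt Y$ inside the holomorphic fiber bundle $\sD_{N_j}\to p(\sD_{N_j})$ yields the desired complex analytic $\wt C$, and hence $s\in S^{\mathrm{Betti}}(t)$ by Lemma~\ref{LemmaBettiFoliationBettiRank}.

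The hardest step will be precisely this passage from the real-analytic Betti fiber of $\wt Y$ (of real dimension $\ge 2t$) to an actual complex analytic subvariety $\wt C\subseteq \sD_0$ of complex dimension $\ge t$ satisfying $\{\wt s_V\}\times \wt C\subseteq \wt\varphi(\wt S)$. In general a real analytic subset of a complex manifold of real dimension $2t$ need not contain any complex analytic subset of positive dimension (totally real subspaces), so the argument must crucially exploit the extra structure: $\wt Y$ lies inside the weak Mumford--Tate subdomain $\sD_{N_j}$ whose decomposition $\sD_{N_j}\cong (V\cap N_j)(\BR)\times p(\sD_{N_j})$ provides the compatibility between the Betti $V$-direction and the holomorphic $\sD_0$-direction. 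This is the mixed analogue of the corresponding technical step of \cite{GaoBettiRank} in the abelian (pure) setting.
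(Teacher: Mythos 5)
Your setup (lifting the fiber component of $[p_{N_j}]|_S$ through $s$ to a complex analytic $\wt Y$ inside the $N_j(\BR)^+$-orbit, and aiming at the criterion of Lemma~\ref{LemmaBettiFoliationBettiRank}) matches the paper, but the mechanism you propose for the decisive step does not work and the hardest part is left unproven. First, ``upper semi-continuity of fiber dimension'' for the \emph{real-analytic} map $\wt\varphi_V|_{\wt Y}$ does not give a lower bound on the dimension of the fiber at the \emph{prescribed} point $\wt s$: the inequality $\dim_x f^{-1}(f(x))\ge \dim X-\dim(\text{target})$ is a theorem about holomorphic maps and fails for real-analytic ones (e.g.\ $(x,y)\mapsto x^2+y^2$), and semicontinuity points in the wrong direction anyway; at best you would get the bound at a generic point of $u(\wt Y)$, not at $s$. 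Second, even granting a real $2t$-dimensional Betti fiber, you explicitly leave open how to extract from it a \emph{complex analytic} $\wt C$ of complex dimension $\ge t$ through the point with $\{\wt s_V\}\times\wt C\subseteq\wt Y$; as you yourself note, a real $2t$-dimensional real-analytic set need not contain any positive-dimensional complex analytic subset, so this is exactly the missing content, not a routine verification. (Your intermediate candidate $\wt C=p(\wt Y)$ is also a red herring: what is needed lives in a single Betti leaf over the fixed coordinate $\wt s_V$, not in the image under $p$.) A minor additional slip: $N_j(\BR)^+\wt s$ is a weak Mumford--Tate domain in the sense of Definition~\ref{DefnWSP} only if $N_j\lhd\mathrm{MT}(\wt s)$, which need not hold for a non-Hodge-generic $\wt s\in\sD_j$; one should simply work with the $N_j(\BR)^+$-orbit that is the relevant fiber of $p_{N_j}$.

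The paper's route avoids real-analytic fiber dimensions altogether, and this is the idea you are missing. By Lemma~\ref{LemmaStrucMTDomain} (applied to $\sD_j$ and $\sD$), the orbit $N_j(\BR)^+x$ containing $\wt Y$ decomposes as $\bigl((V\cap N_j)(\BR)+v\bigr)\times p(N_j)(\BR)^+x_0$ with the translate $v$ \emph{fixed}, using that $V\cap N_j$ is an $\mathbf{H}_{j,0}$-module and $p(N_j)$ acts trivially on $V_j/(V_j\cap N_j)$; so the Betti coordinate of every point of $\wt Y$ is confined to an affine translate of $(V\cap N_j)(\BR)$, and within the orbit the Betti leaves $\{a\}\times p(N_j)(\BR)^+x_0$ are complex analytic (cf.\ Corollary~\ref{CorZariskiClosureBettiFiber}) of complex codimension $\tfrac12\dim_{\BQ}(V\cap N_j)$. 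Intersecting the complex analytic set $\wt Y$ (of complex dimension $\ge \tfrac12\dim_{\BQ}(V\cap N_j)+t$) with the leaf through \emph{any} of its points, inside this complex manifold, the standard complex-analytic intersection-dimension inequality immediately produces $\{a\}\times\wt C\subseteq\wt Y$ with $\wt C$ complex analytic of dimension $\ge t$ through that point; the first equality of \eqref{EqBettiMapBettiFoliation1} then gives $s\in S^{\mathrm{Betti}}(t)$. So complex analyticity of $\wt C$ is automatic and the bound holds at every point, which is precisely what your proposal cannot deliver.
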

\begin{proof} Fix $j$. 
Denote by $\mathbf{H}_j = \mathrm{MT}(\sD_j)$, $V_j := V \cap \mathbf{H}_j$, and $\mathbf{H}_{j,0} := \mathbf{H}_j/V_j$.  Under the identification $\sD = V(\BR) \times \sD_0$ below Lemma~\ref{LemmaStrucMTDomain}, we have $\sD_j = (V_j(\BR)+v') \times p(\sD_j)$ for some $v' \in V(\BQ)$ by Lemma~\ref{LemmaStrucMTDomain} (applied to both $\sD_j$ and $\sD$).

Because $N_j \lhd \mathbf{H}_j$, we have: (i) $V \cap N_j = V_j \cap N_j$ is a $\mathbf{H}_{j,0}$-module; (ii) the action of $p(N_j) \lhd \mathbf{H}_{j,0}$ on $V_j / (V_j \cap N_j)$ is trivial. Let $x \in \sD_j$. Under $\sD_j = (V_j(\BR)+v_0) \times p(\sD_j)$, write $x = (v, x_0)$. Then $N_j(\BR)^+x$ becomes $((V\cap N_j)(\BR) + v) \times p(N_j)(\BR)^+x_0$. Notice that this $v \in V(\BR)$ is fixed.

For each $s \in E_j(t)$, by definition there exist an irreducible $\wt{Y} \subseteq u^{-1}(S) \cap \sD_j$ such that $s \in u(\wt{Y})$, $\dim \wt{Y} \ge \frac{1}{2}\dim (V \cap N_j)+t$, and that $\wt{Y}$ is contained in a fiber of the quotient $\sD_j \rightarrow \sD_j/N_j$. The last condition implies that $\wt{Y} \subseteq N_j(\BR)^+x$ for some $x \in \sD_j$. Hence by the discussion above, $\wt{Y} \subseteq ((V\cap N_j)(\BR) +v) \times \sD_0$ for a fixed $v \in V(\BR)$. Now that $\dim_{\BC} \wt{Y} \ge \frac{1}{2}\dim (V \cap N_j) + t $, the following property holds: For each $(a,x_0) \in \wt{Y} \subseteq ((V\cap N_j)(\BR) +v) \times \sD_0$, there exists a complex analytic subset $\wt{C} \subseteq \sD_0$ with $\dim \wt{C} \ge t$ such that $\{a\} \times \wt{C} \subseteq \wt{Y}$. Hence $s \in S^{\mathrm{Betti}}(t)$ by Lemma~\ref{LemmaBettiFoliationBettiRank} (more precisely, the first equality in \eqref{EqBettiMapBettiFoliation1}). Now, the conclusion of the lemma holds as $s$ runs over $E_j(t)$.
\end{proof}

Now we are ready to prove Theorem~\ref{ThmZariskiClosedDegLoci}.
\begin{proof}[Proof of Theorem~\ref{ThmZariskiClosedDegLoci}]
In this proof, we go back to our original setting and do not replace $S$ by $\varphi(S)$. We have $S \xrightarrow{\varphi} S' \subseteq \Gamma\backslash\sD$ with $S' = \varphi(S)$ an algebraic subvariety of $\Gamma\backslash\sD$.

By \eqref{EqSCFSprimeCF}, Proposition~\ref{PropFoliationDegLociSubset} and Lemma~\ref{LemmaEjInDegLocus} (both applied  to $S^{\prime \mathrm{Betti}}(t-r)$ for each $0\le r \le t-1$), we have
\begin{equation}\label{EqDegLocit}
S^{\mathrm{Betti}}(t) = S_{\ge t} \cup \bigcup\nolimits_{0\le r\le t-1,~ 1\le j \le k} S_{\ge r} \cap \varphi^{-1}\left(E_j(t-r)\right).
\end{equation}
So $S^{\mathrm{Betti}}(t)$ is Zariski closed in $S$ because each member in the union on the right-hand side is. The ``In particular'' part is easy to check once we have established the Zariski closedness of $S^{\mathrm{Betti}}(t)$.
\end{proof}

\section{Application to non-degeneracy in some geometric cases}\label{SectionGeomApp}
In this section, we give two applications of Theorem~\ref{ThmBettiRankFormulaMain}: 
when the VHS is irreducible or when it has a simple algebraic monodromy group. Both cases apply to the Gross-Schoen and the Ceresa normal functions. Retain all the notation from $\S$\ref{SectionBettiRank}.

\begin{thm}\label{ThmBignessGeom}
Assume: (i) $(\BV_{\BZ},\sF^{\bullet}) \rightarrow S$ is irreducible, \textit{i.e.} the only sub-VHSs are trivial or itself; (ii) $\nu(S)$ is not a torsion section. Then 
\[
r(\nu) = \min\left\{\dim \varphi(S), \frac{1}{2}\dim \BV_{\BQ,s}\right\}
\]
for one (and hence for all) $s \in S(\BC)$.

In particular, if futhermore $\dim\varphi(S) = \dim S$ and  $\dim \BV_{\BQ,s} \ge 2 \dim S$, then we have  $r(\nu) = \dim S$.
\end{thm}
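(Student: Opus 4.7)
The approach is to apply the Betti rank formula of Theorem~\ref{ThmBettiRankFormulaMain} and show that, under the irreducibility and non-torsion hypotheses, the minimum over normal subgroups $N\lhd\mathbf{G}$ reduces to the two extreme choices $N=1$ and $N=\mathbf{G}$. The first step is to pin down $V$: by Remark~\ref{RmkSubTorusFib}, $V\cong \BV'_{\BQ,s}$ for a specific sub-VHS $\BV'_{\BZ}\subseteq\BV_{\BZ}$ describing the ``torsion part'' of $\nu$. Irreducibility of $\BV_{\BZ}$ forces $\BV'_{\BZ}\in\{0,\BV_{\BZ}\}$, while the non-torsion hypothesis (ii) rules out $\BV'_{\BZ}=0$; hence $V\cong \BV_{\BQ,s}$ as $\BQ$-vector spaces and $\tfrac12\dim_\BQ V=\tfrac12\dim\BV_{\BQ,s}$.

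Next I would analyze $V\cap N$ for an arbitrary $N\lhd \mathbf{G}$. Since both $V$ and $N$ are normal in $\mathbf{G}$ and $V$ is abelian, $V\cap N$ is automatically a $\mathbf{G}_0$-submodule of $V$, where $\mathbf{G}_0=\mathbf{G}/V$ acts on $V$ by conjugation. Under the identification $V=\BV_{\BQ,s}$ from the previous step, this is the natural action of the generic Mumford--Tate group of $\BV_{\BZ}$ on the fiber, and irreducibility of the VHS $\BV_{\BZ}$ is equivalent to irreducibility of this $\mathbf{G}_0$-representation. Hence $V\cap N\in\{0,V\}$.

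The two cases are disposed of as follows. If $V\subseteq N$, the corresponding term in the formula of Theorem~\ref{ThmBettiRankFormulaMain} is $\dim\varphi_{/N}(S)+\tfrac12\dim V\ge\tfrac12\dim V$, with equality attained at $N=\mathbf{G}$. If instead $V\cap N=0$, I claim $N=1$. Writing $\mathbf{G}=V\rtimes\mathbf{G}_0$ and $n=(u,g)\in N$, the commutator of $n$ with $v\in V$ (viewed as $(v,1)\in\mathbf{G}$) computes in the semidirect product to $[v,n]=(v-g\cdot v,1)\in V$, which by normality of $N$ must lie in $V\cap N=0$; thus $g\cdot v=v$ for every $v\in V$. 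Since $\mathbf{G}_0\subseteq\GL(V)$ is the Mumford--Tate group of $\BV_{\BZ}$ and hence acts faithfully on $V$, the projection $N\to\mathbf{G}_0$ is trivial, so $N\subseteq V$; combined with $V\cap N=0$ this forces $N=1$, and the corresponding term in the formula equals $\dim\varphi(S)$.

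Combining, the minimum equals $\min\{\dim\varphi(S),\tfrac12\dim\BV_{\BQ,s}\}$, proving the main formula. The ``in particular'' clause follows at once by substituting $\dim\varphi(S)=\dim S$ and $\dim\BV_{\BQ,s}\ge 2\dim S$ into the minimum (and noting that the trivial upper bound \eqref{EqBettiRankTrivialUB} already yields $r(\nu)\le\dim S$). The only substantive step is the commutator computation in the semidirect product in the case $V\cap N=0$; everything else is formal, combining the structural facts about $\mathbf{G}=V\rtimes\mathbf{G}_0$, the irreducibility of $V$ as a $\mathbf{G}_0$-module, and Theorem~\ref{ThmBettiRankFormulaMain}.
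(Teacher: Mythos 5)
Your proof is correct and follows essentially the same route as the paper's: identify $V=\BV_{\BQ,s}$ from (i) and (ii), use irreducibility to force $V\cap N\in\{0,V\}$ for $N\lhd\mathbf{G}$, show $N=1$ when $V\cap N=0$ (your commutator computation just makes explicit the paper's assertion that $N_0$ acts trivially on $V$, hence is trivial since $\mathbf{G}_0\subset\GL(V)$), and note the minimum is attained at $N=\mathbf{G}$ when $V\subseteq N$, then conclude by Theorem~\ref{ThmBettiRankFormulaMain}. No gaps.
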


\begin{proof}

Set $\mathbf{G}_0 =\mathbf{G}/V$. Then $V$ is a $\mathbf{G}_0$-submodule of $\BV_{\BQ,s}$ for one (and hence all) $s \in S(\BC)$, and $\mathbf{G}_0$ is by definition a subgroup of $\GL(\BV_{\BQ,s})$.

By (i), $\BV_{\BQ,s}$ is irreducible as a $\mathbf{G}_0$-module. By (ii), $V\not=\{0\}$. Hence  $V = \BV_{\BQ,s}$. 
 Thus we have $\mathbf G_0\subset \GL(V)$.
Let $N$ be a normal subgroup of $\mathbf{G}$. Then $V \cap N$ is a $\mathbf{G}_0$-submodule of $V$, and hence gives rise to a sub-VHS of  $(\BV_{\BZ},\sF^{\bullet}) \rightarrow S$. Hence by (i), either $V\cap N =\{0\}$ or $V\cap N = V$. 

In the first case $V\cap N = \{0\}$, we have the image $N_0$ of $N$ acting trivially on $V$. Thus $N_0=0$. This shows that $N$ is trivial.  So $r(\nu) = \dim \varphi (S)$ in this case.

In the second case, $V\cap N = V$.  It is clearly true that 
$$\min_{N,~V\cap N=V} \left\{\dim \varphi_{/N}(S) \right\} + \frac{1}{2}\dim V$$ is attained at $N = \mathbf{G}$ and hence equals $\frac{1}{2}\dim V$. Thus in this case $r(\nu) = \frac{1}{2}\dim V = \frac{1}{2} \dim \BV_{\BQ,s}$ by \eqref{EqBettiRankFormula}.

Now, we are done by combining the two cases above.
\end{proof}

\begin{thm}\label{ThmBignessGeom2}
Assume: (i) the connected algebraic monodromy group $H$ of $(\BV_{\BZ}, \sF^{\bullet}) \rightarrow S$ is simple; (ii) $(\BV_{\BZ},\sF^{\bullet}) \rightarrow S$ has no isotrivial sub-VHS, \textit{i.e.} locally constant VHS. Then 
\[
r(\nu) = \min\left\{\dim \varphi(S), \frac{1}{2}\dim V\right\}.
\]
\end{thm}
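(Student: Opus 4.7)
The plan is to deduce Theorem~\ref{ThmBignessGeom2} from Theorem~\ref{ThmBettiRankFormulaMain} by analyzing each normal subgroup $N\lhd\mathbf{G}$ via a dichotomy coming from the simplicity of $H$. The trivial upper bound $r(\nu)\le\min\{\dim\varphi(S),\tfrac12\dim V\}$ is \eqref{EqBettiRankTrivialUB}. For the matching lower bound I fix $N\lhd\mathbf{G}$, let $N_0\lhd\mathbf{G}_0$ denote its image under $\mathbf{G}\twoheadrightarrow\mathbf{G}_0=\mathbf{G}/V$, set $V_N:=V\cap N$, and aim to establish
\[
\dim\varphi_{/N}(S)+\tfrac{1}{2}\dim_{\BQ}V_N\ \ge\ \min\bigl\{\dim\varphi(S),\tfrac12\dim V\bigr\}.
\]
By a theorem of Deligne--Andr\'e, $H$ is normal in $\mathbf{G}_0^{\mathrm{der}}$; since the connected center of $\mathbf{G}_0$ centralizes $H$, in fact $\mathbf{G}_0$ normalizes $H$. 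Hypothesis (i) then forces $H\cap N_0^0$ to be either all of $H$ or finite, giving two cases.

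In the first case ($H\subseteq N_0^0$), the plan is to prove $V\subseteq N$, which gives $V_N=V$ and the desired inequality at once. Normality of $V$ and $N$ in $\mathbf{G}$ forces $[N,V]\subseteq V\cap N=V_N$; since $V$ is abelian, $[N,V]$ is the span of $\{(\mathrm{Int}(n)-\mathrm{id})v:n\in N,\,v\in V\}$, with the conjugation action factoring through $N_0$. Hence $V/[N,V]$ is the largest quotient of $V$ on which $N_0$, and a fortiori $H$, acts trivially. Now $V^H$ is a $\mathbf{G}_0$-submodule of $V$ (because $\mathbf{G}_0$ normalizes $H$), hence a sub-VHS of $\BV_\BZ$ with trivial monodromy, i.e.\ an isotrivial sub-VHS; by hypothesis (ii), $V^H=0$. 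Complete reducibility of the semisimple $H$-module $V$ then gives $V_H\cong V^H=0$, so $V/[N,V]=0$ and $V=[N,V]\subseteq V_N$.

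In the second case ($H\cap N_0^0$ finite, so $\mathfrak{h}\cap\mathfrak{n}_0=0$), the target reduces to $\dim\varphi_{/N}(S)+\tfrac12\dim V_N\ge\dim\varphi(S)$. The key input is the pure-part equality $\dim\varphi_{0,/N_0}(S)=\dim\varphi_0(S)$. By Griffiths transversality and the classical fact (Deligne/Katz) that the Gauss--Manin-induced Kodaira--Spencer data take values in the monodromy Lie algebra $\mathfrak h$, the image of $\mathrm{d}\wt{\varphi}_0$ at a Hodge-generic point lies in $\mathfrak{h}^{-1,1}\subseteq T_{x_0}\sD_0=\mathfrak{g}_0^{-1,1}$. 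The kernel of $T\sD_0\to T(\sD_0/N_0)$ is $\mathfrak{n}_0^{-1,1}$, and $\mathfrak{h}^{-1,1}\cap\mathfrak{n}_0^{-1,1}\subseteq\mathfrak{h}\cap\mathfrak{n}_0=0$, so the projection is injective on the image of $\mathrm{d}\wt{\varphi}_0$, giving the equality of generic ranks. Geometrically, $\varphi_0(S)$ meets each $N_0(\BR)^+$-orbit in dimension $0$. Lifting to $\sD\cong V(\BR)\times\sD_0$ from Lemma~\ref{LemmaStrucMTDomain}, every $N(\BR)^+$-orbit in $\sD$ fibers over an $N_0$-orbit in $\sD_0$ with $V_N(\BR)$-coset fibers, and its intersection with $\wt{\varphi}(\wt{S})$ projects to the $0$-dimensional set $\wt{\varphi}_0(\wt{S})\cap N_0(\BR)^+ x_0$. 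It therefore sits over finitely many $V(\BR)$-fibers of $\sD\to\sD_0$ and meets each $V_N(\BR)$-coset in complex dimension at most $\tfrac12\dim V_N$. Consequently the generic fiber of $\varphi(S)\to\varphi_{/N}(S)$ has complex dimension at most $\tfrac12\dim V_N$, yielding $\dim\varphi_{/N}(S)\ge\dim\varphi(S)-\tfrac12\dim V_N$.

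The main technical obstacle I anticipate is the infinitesimal analysis in the second case: precisely identifying the image of $\mathrm{d}\wt{\varphi}_0$ with a subspace of $\mathfrak{h}^{-1,1}$ (the classical ``Kodaira--Spencer data lie in $\mathfrak h$'' fact, which deserves a careful citation), and then correctly tracking how fibers of $\sD\to\sD/N$ decompose under the semi-algebraic product structure $\sD\cong V(\BR)\times\sD_0$. A subsidiary subtlety is that Deligne--Andr\'e gives only $H\lhd\mathbf{G}_0^{\mathrm{der}}$, so the dichotomy operates only at the level of identity components and up to finite index; complete reducibility of $V$ as an $H$-module is what upgrades the component-level containment $H\subseteq N_0^0$ in Case I to the pointwise statement $[N,V]=V$.
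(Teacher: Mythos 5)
Your proposal is correct and follows essentially the same route as the paper: the same reduction to Theorem~\ref{ThmBettiRankFormulaMain} plus the trivial bound \eqref{EqBettiRankTrivialUB}, the same dichotomy coming from Deligne's $H\lhd \mathbf{G}_0^{\mathrm{der}}$ and simplicity of $H$, and in the case $H\subseteq N_0$ the same use of hypothesis (ii) to force $V\cap N=V$ (your commutator/coinvariants phrasing is just a repackaging of the paper's splitting $V=(V\cap N)\oplus (V\cap N)^{\perp}$). In the case $H\cap N_0$ finite your argument is the infinitesimal version of the paper's: the input you flag as needing a careful citation --- that $\mathrm{d}\wt{\varphi}_0$ takes values in the $\mathfrak{h}$-directions --- is exactly what the paper extracts from logarithmic Ax \cite[Thm.~7.2]{GKAS}, namely $\wt{\varphi}_0(\wt{S})\subseteq H(\BR)^+x_0$, after which the paper uses the almost-direct-product decomposition of $\sD_0$ where you use $\mathfrak{h}\cap\mathfrak{n}_0=0$ (and note your claim that $\varphi_0(S)$ meets \emph{each} $N_0(\BR)^+$-orbit in dimension $0$ should be weakened to generic orbits, which is all the fiber-dimension count requires).
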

An important case where assumption (i) is satisfied is when the generic Mumford--Tate group of $(\BV_{\BZ}, \sF^{\bullet}) \rightarrow S$ is quasi-simple; this follows from Deligne \cite[$\S$7.5]{Del}. 
 
 \begin{proof}
Set $\mathbf{G}_0 =\mathbf{G}/V$.  By Deligne \cite[$\S$7.5]{Del}, $H \lhd \mathbf{G}^{\mathrm{der}}_0$. 

Let $N$ be a normal subgroup of $\mathbf{G}$.  The reductive part $N_0 := N/(V\cap N)$ is a normal subgroup of $\mathbf{G}_0$. Now that $N_0 \cap H$ is a normal subgroup of $H$, by (i), we have that either $N_0 \cap H$ is finite or $H < N_0$.

Assume $H< N_0$. Since $\mathbf{G}_0$ is reductive, we can decompose $V = (V\cap N) \bigoplus (V\cap N)^{\perp}$ as $\mathbf{G}_0$-modules. Now $(V\cap N)^{\perp}$ gives rise to a sub-VHS of $(\BV_{\BZ},\sF^{\bullet})$. But $H$ acts trivially on $(V\cap N)^{\perp}$ because $N_0$ acts trivially on $V/(V\cap N)$. 
So $(V\cap N)^{\perp} = \{0\}$ by (ii). Hence $V\cap N = V$, and hence $V < N$. 
 So in this case we have $$\dim \varphi_{/N}(S) + \frac{1}{2}\dim (V\cap N) \ge \frac{1}{2}\dim V.$$
 The equality obtains when $N=\mathbf G$. So in this case $r(\nu) = \frac{1}{2}\dim V$ by \eqref{EqBettiRankFormula}.

Assume that $N_0\cap H$ is finite. To handle this case, we again use the universal period map of the VHS $\wt{\varphi}_0 \colon \wt{S} \rightarrow \sD_0$ associated with the VHS $\BV_{\BZ} \rightarrow S$ from \eqref{EqDiagramPeriodMapComplete}, where $\sD_0$ is a $\mathbf{G}_0(\BR)^+$-orbit and the natural projection $p \colon \sD \rightarrow \sD_0$ is induced by the quotient $p\colon \mathbf{G} \rightarrow \mathbf{G}_0$. Moreover, we have $\wt{\varphi}_0 = p \circ \wt{\varphi}$.

By logarithmic Ax \cite[Thm.~7.2]{GKAS}, we have $\wt{\varphi}_0(\wt{S}) \subseteq H(\BR)^+x_0$ for some $x_0 \in \sD_0$. So $\wt{\varphi}(\wt{S}) \subseteq p^{-1}(\wt{\varphi}_0(\wt{S})) \subseteq p^{-1}(H(\BR)^+x_0)$. Under $\sD = V(\BR)\times \sD_0$ below Lemma~\ref{LemmaStrucMTDomain}, we have $\wt{\varphi}(\wt{S}) \subseteq V(\BR)\times H(\BR)^+x_0$.


Recall the definition $\varphi_{/N} \colon S \xrightarrow{\varphi} \Gamma\backslash\sD \xrightarrow{[p_N]} \Gamma_{/N}\backslash(\sD/N)$, with  $[p_N]$ induced by $p_N \colon \mathbf{G} \rightarrow \mathbf{G}/N$. Notice that $p_N$ can be decomposed as the composite
\[
p_N \colon \mathbf{G} \xrightarrow{p_{V\cap N}} \mathbf{G}/(V\cap N) \xrightarrow{p_{N_0}} \mathbf{G}/N,
\]
and this induces
\[
[p_N] \colon  \Gamma\backslash\sD \xrightarrow{[p_{V\cap N}]} \Gamma_{/V\cap N}\backslash (\sD/(V\cap N)) \xrightarrow{[p_{N_0}]} \Gamma_{/N}\backslash(\sD/N).
\]
We claim that $\dim ([p_{V\cap N}]\circ \varphi)(S) = \dim ([p_{N_0}] \circ [p_{V\cap N}]\circ \varphi)(S) = \dim \varphi_{/N}(S)$. Indeed, $[p_N]$ is obtained from the composite $p_N \colon \sD\xrightarrow{p_{V\cap N}} \sD/(V\cap N) \xrightarrow{p_{N_0}} \sD/N$, and we are left to prove that each fiber of $p_{N_0}$ restricted to $p_{V\cap N}(\wt{\varphi}(\wt{S}))$ has dimension $0$. 
Denote for simplicity by $V' := V/(V\cap N)$. Then $\sD/(V\cap N) = V'(\BR) \times \sD_0$, and $p_{V\cap N}(\wt{\varphi}(\wt{S})) \subseteq V'(\BR) \times H(\BR)^+x_0$ by the conclusion of the previous paragraph. Let us furthermore decompose $\sD_0$. 
As a reductive group, $\mathbf{G}_0$ is the almost direct product of $Z(\mathbf{G}_0)$ and its simple factors. Both $N_0$ and $H$ are normal subgroups of $\mathbf{G}_0$. So $\mathbf{G}_0 = Z(\mathbf{G}_0) \cdot N_0 \cdot H \cdot H' $ as an almost direct product, where $H'$ is a semisimple subgroup of $\mathbf{G}_0$. Applied to $\sD_0 = \mathbf{G}_0(\BR)^+x_0$, we then have a decomposition $\sD_0 \cong \sD_{N_0} \times \sD_H \times \sD_{H'}$ with $\sD_{N_0} = N_0(\BR)^+x_0$ and $\sD_H = H(\BR)^+x_0$ and $\sD_{H'} = H'(\BR)^+x_0$. Now $\sD/(V\cap N) \cong V'(\BR) \times \sD_{N_0} \times \sD_H \times \sD_{H'}$, and $p_{V\cap N}(\wt{\varphi}(\wt{S})) \subseteq V'(\BR) \times \{z\} \times \sD_H \times \{z'\}$ for some $z \in \sD_{N_0}$ and $z' \in \sD_{H'}$, and $p_{N_0}$ is the projection $V'(\BR) \times \sD_{N_0} \times \sD_H \times \sD_{H'} \rightarrow V'(\BR) \times   \sD_H \times \sD_{H'}$. Hence we establish our claim.

Notice that each fiber of $[p_{V\cap N}]$ has dimension $\frac{1}{2}\dim(V\cap N)$. So $\dim \varphi(S) - \dim ([p_{V\cap N}]\circ \varphi)(S) \le \frac{1}{2}\dim(V\cap N)$. We proved in the previous paragraph $\dim ([p_{V\cap N}]\circ \varphi)(S)  = \dim \varphi_{/N}(S)$. So
\[
\dim \varphi_{/ N}(S) + \frac{1}{2}\dim (V\cap N) \ge \dim \varphi(S).
\]
The equality is attained when $N=1$. So in this case $r(\nu) = \dim \varphi(S)$  by \eqref{EqBettiRankFormula}.

Now, we are done by combining the two cases above.
\end{proof}

We can say more. {\em Griffiths' transversality} says that $\nu_{\mathrm{Betti},s}(u) \in \BV_s^{-1,0}$ for all $s \in S(\BC)$ and $\nu_{\mathrm{Betti},s}$ defined in \eqref{DefnBettiIntro}. So $\nu_{\mathrm{Betti},s}(T_s S) <  \frac{1}{2}\dim V$  if $\sJ(\BV'_{\BZ}) \rightarrow S$ is not an abelian scheme, with $\BV'_{\BZ}$  defined in Remark~\ref{RmkSubTorusFib}. 
Hence $\dim V > 2\dim\varphi(S)$ in the situations of Theorem~\ref{ThmBignessGeom} and Theorem~\ref{ThmBignessGeom2} {\it unless} $\sJ(\BV_{\BZ}') \rightarrow S$ is  an abelian scheme.

\smallskip
By Theorem~\ref{ThmZariskiClosedDegLoci}, $S^{\mathrm{Betti}}(1)$ is a proper Zariski closed subset of $S$ if $r(\nu) = \dim S$. 
Observe that $S^{\mathrm{Betti}}(1)$ contains any complex analytic curve $C \subseteq S$ such that $\nu(C)$ is torsion, because all torsion multi-sections are leaves of the Betti foliation. So there are at most countably many $s \in S(\BC)$ outside $S^{\mathrm{Betti}}(1)$ such that $\nu(s)$ is torsion, and we get:
\begin{cor}\label{CorSF1Torsion}
Under the assumptions of either Theorem~\ref{ThmBignessGeom} or Theorem~\ref{ThmBignessGeom2}. Assume that the period map $\varphi$ is generically finite and that $\dim S \le \frac{1}{2}\dim_{\BQ} V$. Then there exists a Zariski open dense subset $U$ of $S$ such that $\nu(s)$ is torsion for at most countably many $s \in U(\BC)$. Indeed, one can take $U = S\setminus S^{\mathrm{Betti}}(1)$.
\end{cor}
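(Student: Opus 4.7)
The plan is to chain together three ingredients that are already in place: the Betti rank formulas in Theorems~\ref{ThmBignessGeom} and~\ref{ThmBignessGeom2}, the Zariski closedness from Theorem~\ref{ThmZariskiClosedDegLoci}, and the fact (recorded right below \eqref{EqLocSysTori}) that every torsion multi-section is a leaf of $\CF_{\mathrm{Betti}}$.

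First I would compute $r(\nu)$. Under the hypotheses of Theorem~\ref{ThmBignessGeom} or~\ref{ThmBignessGeom2},
\[
r(\nu) = \min\{\dim \varphi(S),\, \tfrac{1}{2}\dim_{\BQ} V\}.
\]
Since $\varphi$ is generically finite, $\dim \varphi(S) = \dim S$, and the assumption $\dim S \le \frac12 \dim_\BQ V$ then forces $r(\nu) = \dim S$.

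Next, I would invoke the ``In particular'' clause of Theorem~\ref{ThmZariskiClosedDegLoci}: $r(\nu) = \dim S$ is equivalent to $S^{\mathrm{Betti}}(1) \subsetneq S$. Combined with the Zariski closedness of $S^{\mathrm{Betti}}(1)$ (the main assertion of Theorem~\ref{ThmZariskiClosedDegLoci}), this means $U := S\setminus S^{\mathrm{Betti}}(1)$ is a Zariski open dense subset of $S$.

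Finally, I would rule out positive-dimensional torsion loci inside $U$. Suppose $C \subseteq S$ is an irreducible complex-analytic subset of positive dimension with $\nu(C)$ contained in a torsion multi-section $\tau$ of $\sJ(\BV_{\BZ}) \rightarrow S$. Because $\tau$ is a leaf of the Betti foliation, $\nu(C) \subseteq \nu(S) \cap \CF_{\mathrm{Betti}}$ at every point of $\nu(C)$ with positive-dimensional local intersection, so $C \subseteq S^{\mathrm{Betti}}(1)$. Equivalently, for every torsion multi-section $\tau$ the fiber $\nu^{-1}(\tau) \cap U$ is zero-dimensional, hence at most countable (as $U$ is a quasi-projective variety, so $\sigma$-compact). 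Since there are only countably many torsion multi-sections of $\sJ(\BV_{\BZ}) \rightarrow S$, the set $\{s \in U(\BC) : \nu(s)\text{ is torsion}\}$ is the countable union of countable sets, hence countable. No step is particularly hard — all the content is in the preceding theorems; the only small check is that ``$\nu(C) \subseteq \tau$'' really does propagate to ``$C \subseteq S^{\mathrm{Betti}}(1)$'', which is immediate from the definition \eqref{EqBettiStrataSet} once one notes that $\tau$ being a Betti leaf means the local dimension of $\nu(S) \cap \CF_{\mathrm{Betti}}$ at $\nu(s)$ is at least $\dim C \ge 1$ for all $s \in C$.
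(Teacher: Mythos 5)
Your proposal is correct and follows essentially the same route as the paper: compute $r(\nu)=\dim S$ from Theorem~\ref{ThmBignessGeom} or~\ref{ThmBignessGeom2} together with generic finiteness and $\dim S\le\frac{1}{2}\dim_{\BQ}V$, conclude $S^{\mathrm{Betti}}(1)\subsetneq S$ is Zariski closed by Theorem~\ref{ThmZariskiClosedDegLoci}, and observe that any positive-dimensional analytic set mapping into a torsion multi-section lies in $S^{\mathrm{Betti}}(1)$ since torsion multi-sections are Betti leaves. Your spelled-out countability step (countably many torsion multi-sections, each with discrete preimage in $U$) is just a mild elaboration of the paper's one-line conclusion.
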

\begin{rmk}\label{RmkSF1Torsion}
In general, $\varphi$ is not necessarily generically finite. Then the conclusion becomes: there exists a Zariski open dense subset $U$ of $S$ such that $\{ s \in U(\BC): \nu(s)\text{ is torsion}\}$ is contained in at most countably many fibers of $\varphi$. Indeed, $\varphi(S)$ is an algebraic variety by \cite{BBTmixed}. Then we can conclude by applying Corollary~\ref{CorSF1Torsion} to $\varphi(S)$ and the induced normal function $\nu'$ (see $\S$\ref{SubsectionReplacingSbyImage} for the construction of $\nu'$).
\end{rmk}

In Corollary~\ref{CorSF1Torsion} and Remark~\ref{RmkSF1Torsion}, the condition ``$\dim S \le \frac{1}{2}\dim_{\BQ}V$'' can be removed if $\sJ(\BV'_{\BZ}) \rightarrow S$ is not an abelian scheme.

\section{Degeneracy loci}\label{SectionDegLoci}
In this section, we continue to use the notation in $\mathsection$\ref{SectionBettiRank} and  make the following assumption:

\vskip 0.2em
\begin{center}
{\tt (Hyp): Each fiber of the period map $\varphi = \varphi_{\nu} \colon S \rightarrow \Gamma\backslash\sD$ is finite and $\dim S > 0$}.
\end{center}
\vskip 0.2em

For each $t \ge 0$, define the {\it $t$-th degeneracy locus} of $S$ to be
\begin{equation}\label{EqDegLoci}
S^{\mathrm{deg}}(t) := \bigcup_{ \substack{Y \subseteq S ,~ \dim Y > 0 \\ \dim Y > \dim Y^{\mathrm{ws}} -  \dim [p](Y^{\mathrm{ws}}) -t }} Y,
\end{equation}
where $Y$ runs over all irreducible subvarieties of $S$. This defines the {\it degeneracy strata} on $S$
\begin{equation}\label{EqDegStrata}
S^{\mathrm{deg}}(0) \subseteq S^{\mathrm{deg}}(1) \subseteq \cdots \subseteq S^{\mathrm{deg}}(t_0)\subseteq \cdots
\end{equation}
\begin{lemma}\label{LemmaDegLociWOP}
In the definition \eqref{EqDegLoci}, we may furthermore assume $Y$ to be weakly optimal in $S$ (see Definition~\ref{DefnWOP}).
\end{lemma}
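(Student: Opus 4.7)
The plan is to show that every $Y$ contributing to the union \eqref{EqDegLoci} is contained in some weakly optimal $Y'$ which still contributes to the union, so replacing the indexing set by the subset of weakly optimal subvarieties does not shrink the union (the reverse containment is obvious). The argument is the standard maximality trick built around the invariant $\delta_{\mathrm{ws}}(\cdot)$, together with the monotonicity of the weakly special closure.

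Concretely, I would start from an irreducible $Y \subseteq S$ with $\dim Y>0$ and $\dim Y > \dim Y^{\mathrm{ws}} - \dim[p](Y^{\mathrm{ws}}) - t$, and pick $Y'$ to be an irreducible subvariety of $S$ of maximal dimension subject to $Y \subseteq Y'$ and $\delta_{\mathrm{ws}}(Y') \le \delta_{\mathrm{ws}}(Y)$; this maximum is attained since dimensions of subvarieties of $S$ are bounded. For any strict enlargement $Y' \subsetneq Y''\subseteq S$ the maximality forces $\delta_{\mathrm{ws}}(Y'') > \delta_{\mathrm{ws}}(Y) \ge \delta_{\mathrm{ws}}(Y')$, so $Y'$ is weakly optimal by Definition~\ref{DefnWOP}. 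Moreover $\dim Y' \ge \dim Y > 0$.

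It remains to verify that $Y'$ still satisfies the defining inequality. I would use two ingredients. First, $Y \subseteq Y'$ implies $Y^{\mathrm{ws}} \subseteq Y'^{\mathrm{ws}}$: choosing a complex analytic component $\wt Y$ of $u^{-1}(Y)$ contained in some component $\wt{Y'}$ of $u^{-1}(Y')$, the smallest weak Mumford--Tate domain containing $\wt{Y'}$ automatically contains $\wt Y$, and applying $u$ gives the inclusion. Consequently $\dim[p](Y^{\mathrm{ws}}) \le \dim[p](Y'^{\mathrm{ws}})$. Second, $\delta_{\mathrm{ws}}(Y') \le \delta_{\mathrm{ws}}(Y)$ is exactly $\dim Y' - \dim Y'^{\mathrm{ws}} \ge \dim Y - \dim Y^{\mathrm{ws}}$. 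Adding these two gives
\[
\dim Y' - \dim Y'^{\mathrm{ws}} + \dim[p](Y'^{\mathrm{ws}}) \;\ge\; \dim Y - \dim Y^{\mathrm{ws}} + \dim[p](Y^{\mathrm{ws}}) \;>\; -t,
\]
which is precisely the defining inequality applied to $Y'$. So $Y \subseteq Y'$ with $Y'$ weakly optimal and still contributing to the union, completing the proof.

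I do not expect any serious obstacle; the only nontrivial point is the monotonicity $Y \subseteq Y' \Rightarrow Y^{\mathrm{ws}} \subseteq Y'^{\mathrm{ws}}$, which is immediate from the definition of the weakly special closure recorded before Definition~\ref{DefnWOP}. Everything else is formal manipulation of dimensions and the boundedness of $\dim(\cdot)$ on subvarieties of $S$.
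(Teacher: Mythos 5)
Your proof is correct and is essentially the paper's argument: the same maximality trick with $\delta_{\mathrm{ws}}$, using the monotonicity $Y\subseteq Y' \Rightarrow Y^{\mathrm{ws}}\subseteq Y'^{\mathrm{ws}}$ (hence $\dim[p](Y^{\mathrm{ws}})\le\dim[p](Y'^{\mathrm{ws}})$) to transfer the defining inequality. The only cosmetic difference is that the paper takes $Y$ maximal (with respect to inclusion) for the defining inequality itself and then deduces weak optimality, whereas you enlarge to a dimension-maximal $Y'$ with $\delta_{\mathrm{ws}}(Y')\le\delta_{\mathrm{ws}}(Y)$ and then re-verify the inequality; both are the same argument.
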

An upshot of this lemma is $S^{\mathrm{deg}}(0) = S^{\mathrm{Betti}}(1)$ in view of Proposition~\ref{PropBettiRankNonMaxAndDegLocus}.
\begin{proof}[Proof of Lemma~\ref{LemmaDegLociWOP}]
Assume $Y \subseteq S$ satisfies 
\[
\dim Y > \dim Y^{\mathrm{ws}} -  \dim [p](Y^{\mathrm{ws}}) -t
\]
 and is maximal for this property. It suffices to prove that $Y$ is weakly optimal.

Let $Y \subseteq Y' \subseteq S$ with $Y'$ irreducible. Assume $\delta_{\mathrm{ws}}(Y) \ge \delta_{\mathrm{ws}}(Y')$, \textit{i.e.} 
\[
\dim  Y^{\mathrm{ws}} - \dim Y \ge \dim  Y^{\prime,\mathrm{ws}} - \dim Y'.
\]
The assumption on $Y$ implies $\dim  Y^{\mathrm{ws}} - \dim Y < \dim [p](   Y^{\mathrm{ws}} ) + t$. Combined with the inequality above, we obtain $\dim  Y^{\prime,\mathrm{ws}} - \dim Y' <  \dim [p](   Y^{\prime,\mathrm{ws}} ) + t$ because $Y \subseteq Y'$. Therefore $Y = Y'$ by the maximality of $Y$. So $Y$ is weakly optimal by definition. We are done.
\end{proof}

Now Geometric Zilber--Pink (Proposition~\ref{PropFinitenessBogomolov}) gives finitely many pairs $(\sD_1,N_1),\ldots,(\sD_k,N_k)$ such that for each weakly optimal $Y$ in $S$, its weakly special closure $Y^{\mathrm{ws}}$ is a fiber of 
\[
[p_{N_j}] \colon \Gamma_j\backslash\sD_j \rightarrow \Gamma_{j,/N_j}\backslash(\sD_j/N_j).
\]
Define for each $j \in \{1,\ldots,k\}$ and $t \ge 0$ the set
\begin{equation}\label{EqFjSet}
F_j(t) := \left\{ s\in u(\sD_j) \cap S : \dim_s [p_{N_j}]|_S^{-1}([p_{N_j}](s)) > \max\{0,\frac{1}{2}\dim (V \cap N_j) - t \} \right\}
\end{equation}
which is both definable and complex analytic in $u(\sD_j)\cap S$, and hence is algebraic by definable Chow. Moreover it is Zariski closed in $S$ by the upper semi-continuity of fiber dimensions.

\begin{thm}\label{ThmDegStrataZarClosed}
We have
\begin{equation}\label{EqUnionDeg}
S^{\mathrm{deg}}(t) = \bigcup_{j=1}^k F_j(t).
\end{equation}
In particular, $S^{\mathrm{deg}}(t)$ is Zariski closed in $S$, and hence the degeneracy strata \eqref{EqDegStrata} is stable by Noetherian condition.
\end{thm}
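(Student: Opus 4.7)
The plan is to establish the set-theoretic equality \eqref{EqUnionDeg} by a direct double-inclusion argument. Once this is done, the ``In particular'' assertions will be immediate: each $F_j(t)$ was already noted (in the paragraph preceding the theorem) to be Zariski closed in $S$, so their finite union is Zariski closed; the stabilization of the chain \eqref{EqDegStrata} then follows from Noetherianity of the underlying quasi-projective variety $S$.

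For the inclusion $S^{\mathrm{deg}}(t) \subseteq \bigcup_{j=1}^k F_j(t)$, I would start from a point $s \in S^{\mathrm{deg}}(t)$ and apply Lemma~\ref{LemmaDegLociWOP} to obtain a \emph{weakly optimal} irreducible $Y \subseteq S$ with $s \in Y$, $\dim Y > 0$, and $\dim Y > \dim Y^{\mathrm{ws}} - \dim [p](Y^{\mathrm{ws}}) - t$. Proposition~\ref{PropFinitenessBogomolov} then produces an index $j \in \{1,\ldots,k\}$ such that $Y^{\mathrm{ws}}$ is a fiber of the quotient $[p_{N_j}]$. By the product decomposition of Lemma~\ref{LemmaStrucMTDomain} applied to the $N_j(\BR)^+$-orbit lifting $Y^{\mathrm{ws}}$, the relative dimension $\dim Y^{\mathrm{ws}} - \dim [p](Y^{\mathrm{ws}})$ equals exactly $\tfrac12\dim(V\cap N_j)$. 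Combining this with $\dim Y > 0$ gives $\dim Y > \max\{0,\tfrac12\dim(V\cap N_j)-t\}$, and since $Y \subseteq Y^{\mathrm{ws}}$ lies inside a single fiber of $[p_{N_j}]|_S$, the local fiber dimension at $s$ is at least $\dim Y$, whence $s \in F_j(t)$.

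For the reverse inclusion, given $s \in F_j(t)$ I would take $Y$ to be an irreducible component of the fiber $[p_{N_j}]|_S^{-1}([p_{N_j}](s))$ through $s$ realizing the local dimension at $s$; $Y$ is algebraic by the same definable-Chow argument that established algebraicity of $F_j(t)$, and $\dim Y > \max\{0, \tfrac12\dim(V\cap N_j)-t\}$, in particular $\dim Y > 0$. An analytic lift $\widetilde Y \subseteq \sD$ sits inside a single $N_j(\BR)^+$-orbit, and hence $Y^{\mathrm{ws}}$, the smallest weak Mumford--Tate domain containing $\widetilde Y$, sits inside the same orbit. The key step is then the dimension inequality
\[
\dim Y^{\mathrm{ws}} - \dim[p](Y^{\mathrm{ws}}) \;\le\; \tfrac12\dim(V\cap N_j),
\]
which, combined with $\dim Y > \tfrac12\dim(V\cap N_j) - t$, yields the defining inequality of \eqref{EqDegLoci} and thus $s \in Y \subseteq S^{\mathrm{deg}}(t)$. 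I expect this last dimension inequality to be the main obstacle: to prove it I would apply Lemma~\ref{LemmaStrucMTDomain} \emph{twice}, once to the $N_j(\BR)^+$-orbit and once to $Y^{\mathrm{ws}}$ (written as $N'(\BR)^+x'$), and compare the two product decompositions over their common base in $\sD_0$; the inclusion $Y^{\mathrm{ws}} \subseteq N_j(\BR)^+\cdot x$ forces the unipotent part of $Y^{\mathrm{ws}}$ (up to translation) into $V\cap N_j$, giving the desired bound on fiber dimensions of $p$ restricted to $Y^{\mathrm{ws}}$.
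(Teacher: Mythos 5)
Your proposal is correct and takes essentially the same approach as the paper: the inclusion $S^{\mathrm{deg}}(t) \subseteq \bigcup_{j=1}^k F_j(t)$ via Lemma~\ref{LemmaDegLociWOP}, Proposition~\ref{PropFinitenessBogomolov} and the identity $\dim Y^{\mathrm{ws}} - \dim [p](Y^{\mathrm{ws}}) = \frac{1}{2}\dim(V\cap N_j)$ is exactly the paper's argument. The reverse inclusion that you spell out is only asserted in the paper's proof of this theorem, but your argument for it (a positive-dimensional component $Y$ of a fiber of $[p_{N_j}]|_S$ has $Y^{\mathrm{ws}}$ inside a fiber of $[p_{N_j}]$, whence $\dim Y^{\mathrm{ws}} - \dim [p](Y^{\mathrm{ws}}) \le \frac{1}{2}\dim(V\cap N_j)$) is precisely the computation the paper carries out in its proof of the degenerate-degree formula \eqref{EqT0}, so your identification and handling of the ``main obstacle'' match the intended argument.
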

Before moving on to the proof, let us make the following definition.
\begin{defn}
The {\it degenerate degree} of $S$ is the minimal $t_0 \ge 0$ such that $S^{\mathrm{deg}}(t_0) = S$.
\end{defn}
By definition, we have $t_0 \le \dim S^{\mathrm{ws}} - \dim [p](S^{\mathrm{ws}})$ because we can take $Y$ to be $S$ in \eqref{EqDegLoci} when $t = \dim S^{\mathrm{ws}} - \dim [p](S^{\mathrm{ws}})$.

\begin{proof}[Proof of Theorem~\ref{EqDegStrata}]
By Lemma~\ref{LemmaDegLociWOP}, $S^{\mathrm{deg}}(t)$ 
 is covered by weakly optimal $Y \subseteq S$ such that 
 $\dim Y >  \max\{0, \dim  Y^{\mathrm{ws}} -  \dim [p]( Y^{\mathrm{ws}}) - t\}$.  Then by Proposition~\ref{PropFinitenessBogomolov}, $ Y^{\mathrm{ws}}$ is a fiber $[p_{N_j}]$ for some $j \in \{1,\ldots,k\}$, and hence $\dim  Y^{\mathrm{ws}} - \dim [p]( Y^{\mathrm{ws}}) = \frac{1}{2}\dim (V \cap N_j)$. So $\dim Y > \max\{0,  \frac{1}{2}\dim (V \cap N_j)-t\}$. Notice that $[p_{N_j}](Y)$ is a point because $Y \subseteq Y^{\mathrm{ws}}$. So $F_j(t) \subseteq S^{\mathrm{deg}}(t)$. 
\end{proof}

\begin{thm}
The degenerate degree $t_0$ of $S$ can be computed by the formula
\begin{equation}\label{EqT0}
t_0 = \min_{N} \left\{ \frac{1}{2}\dim_{\BQ} (V \cap N) + \dim \varphi_{/N}(S) \right\} - \dim S +1
\end{equation}
where $N$ runs over all normal subgroups of $\mathbf{G}$ such that $\dim S - \dim \varphi_{/N}(S)  > 0$.
\end{thm}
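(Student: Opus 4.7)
The plan is to extract the formula from Theorem~\ref{ThmDegStrataZarClosed} via a careful dimension count on the sets $F_j(t)$ that exhaust $S^{\mathrm{deg}}(t)$. By definition, $t_0$ is the smallest $t \ge 0$ with $S^{\mathrm{deg}}(t) = S$, and Theorem~\ref{ThmDegStrataZarClosed} writes $S^{\mathrm{deg}}(t) = \bigcup_{j=1}^k F_j(t)$ as a finite union of Zariski closed subsets of $S$. Irreducibility of $S$ then forces $S = F_j(t)$ for some $j$ whenever $S^{\mathrm{deg}}(t) = S$. The first task is to characterize precisely when $S = F_j(t)$. The inclusion $S \subseteq u(\sD_j)$ combined with the fact that $\sD$ is the smallest Mumford--Tate domain containing $\wt\varphi(\wt S)$ forces $\sD_j = \sD$ and thus $N_j \lhd \mathbf{G}$. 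Then, by upper semi-continuity of fiber dimension, the condition in \eqref{EqFjSet} holds at every $s \in S$ iff it holds at a generic $s$, which reads
\[
\dim S - \dim \varphi_{/N_j}(S) \;>\; \max\!\left\{0,\, \tfrac{1}{2}\dim_{\BQ}(V\cap N_j) - t\right\}.
\]
This is equivalent to $N_j$ lying in the index set of \eqref{EqT0} together with $t \ge \tfrac{1}{2}\dim_{\BQ}(V\cap N_j) + \dim \varphi_{/N_j}(S) - \dim S + 1$. Choosing $j$ that realizes $S = F_j(t_0)$ gives the inequality ``$\ge$'' in \eqref{EqT0}.

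For ``$\le$'', fix any $N \lhd \mathbf{G}$ with $\dim S - \dim\varphi_{/N}(S) > 0$ and set $t := \tfrac{1}{2}\dim_{\BQ}(V\cap N) + \dim \varphi_{/N}(S) - \dim S + 1$; I will show $S^{\mathrm{deg}}(t) = S$. For any $s$ in the Zariski open dense subset of $S$ over which the fibers of $\varphi_{/N}|_S$ attain the minimal dimension $\dim S - \dim \varphi_{/N}(S)$, let $Y$ be the irreducible component through $s$ of the corresponding fiber. Then $\dim Y \ge 1$ and a suitable lift $\wt Y$ of $Y$ lies in $N(\BR)^+x \subseteq \sD$ for some Hodge generic $x$, so $N(\BR)^+x$ is itself a weak Mumford--Tate domain. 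Hence $\wt Y^{\mathrm{ws}} = M(\BR)^+y$ for some $M \lhd \mathbf{G}$ contained (up to conjugation) in $N$. Using the fibered structure $\sD \cong V(\BR) \times \sD_0$ of Lemma~\ref{LemmaStrucMTDomain} and the complex-analytic description of Proposition~\ref{PropStruMT}(ii), a direct count yields
\[
\dim Y^{\mathrm{ws}} - \dim [p](Y^{\mathrm{ws}}) \;=\; \tfrac{1}{2}\dim_{\BQ}(V\cap M) \;\le\; \tfrac{1}{2}\dim_{\BQ}(V\cap N).
\]
Combining this with $\dim Y \ge \dim S - \dim \varphi_{/N}(S)$ and the choice of $t$ verifies the strict inequality defining $S^{\mathrm{deg}}(t)$, so $Y \subseteq S^{\mathrm{deg}}(t)$. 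As $s$ varies, these $Y$ cover a dense open subset of $S$, and the Zariski closedness of $S^{\mathrm{deg}}(t)$ from Theorem~\ref{ThmDegStrataZarClosed} upgrades this to $S^{\mathrm{deg}}(t) = S$. Minimizing over admissible $N$ yields ``$\le$'' and completes the proof.

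The main technical point is the dimension identity $\dim Y^{\mathrm{ws}} - \dim [p](Y^{\mathrm{ws}}) = \tfrac{1}{2}\dim_{\BQ}(V\cap M)$ together with the containment $M \subseteq N$ up to conjugation in $\mathbf{G}$. The subtlety is that for a generic fiber $Y$ of $\varphi_{/N}|_S$, the weakly special closure $Y^{\mathrm{ws}}$ need not coincide with the ambient $N(\BR)^+$-orbit but only with an $M(\BR)^+$-orbit for some normal subgroup $M$ of $\mathbf{G}$ contained in $N$; fortunately the monotonicity $\dim_{\BQ}(V\cap M) \le \dim_{\BQ}(V\cap N)$ is all the argument requires. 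Both the identity and the structural description of $Y^{\mathrm{ws}}$ rely only on the fibered decomposition of Mumford--Tate domains from $\S$\ref{SectionSummaryClassifyingSpaceMTDomain}; once these are in hand, the remainder is dimension book-keeping.
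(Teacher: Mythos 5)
Your proposal is correct and follows essentially the same route as the paper: for ``$\ge$'' you use the decomposition $S^{\mathrm{deg}}(t_0)=\bigcup_j F_j(t_0)$, irreducibility of $S$, and Hodge genericity to force $\sD_j=\sD$, exactly as in the paper; for ``$\le$'' you take positive-dimensional components of fibers of $\varphi_{/N}$ and bound $\dim Y^{\mathrm{ws}}-\dim[p](Y^{\mathrm{ws}})$ by $\tfrac12\dim_{\BQ}(V\cap N)$, which is the paper's argument. The only (harmless) difference is that the paper obtains this bound directly from the containment of $Y^{\mathrm{ws}}$ in a fiber of $[p_N]$, whose $[p]$-fibers are translates of $(V\cap N)(\BR)$, rather than via your identification $\wt Y^{\mathrm{ws}}=M(\BR)^+y$ with $M$ inside $N$ and the equality for $M$, so your extra claims about $M$ are not actually needed.
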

\begin{proof}
We have $S = S^{\mathrm{deg}}(t_0)$. So $S^{\mathrm{deg}}(t_0) = F_{j}(t_0)$ for some $j \in \{1,\ldots,k\}$ by \eqref{EqUnionDeg}. Without loss of generality we may assume $j = 1$. Then $\sD_1 = \sD$, and hence $N \lhd \mathbf{G}$. 
We have $S \subseteq u(\sD)$. Since $S = F_1(t_0)$, by \eqref{EqFjSet} each fiber of $[p_{N_1}]|_S$ has dimension $> \max\{0,\frac{1}{2}\dim (V \cap N_1) - t_0 \}$. Therefore
\[
\dim S - \dim \varphi_{/N_1}(S) > \max \left\{0,\frac{1}{2}\dim (V \cap N_1) - t_0 \right\}.
\]
So $t_0 > \frac{1}{2}\dim (V \cap N_1) + \dim \varphi_{/N_1}(S) - \dim S$. This establishes $\ge$.

Conversely, take $N \lhd \mathbf{G}$ such that $\dim S - \dim \varphi_{/N}S > 0$ and that $\frac{1}{2}\dim_{\BQ} (V \cap N) + \dim \varphi_{/N}(S)$ attains its minimum. Set $t' := \frac{1}{2}\dim_{\BQ} (V \cap N) + \dim \varphi_{/N}(S) - \dim S + 1$. Each fiber of $\varphi_{/N}$ is a Zariski closed subset of $S$, and has an irreducible component $Y$  such that $\dim Y \ge \dim S - \dim \varphi_{/N}(S) > 0$. Moreover, the union of such $Y$'s is Zariski dense in $S$.

Take such a $Y$. Then $Y^{\mathrm{ws}}$ is contained in a fiber of $[p_N]$. So $\dim Y^{\mathrm{ws}} - \dim [p](Y^{\mathrm{ws}}) \le \frac{1}{2}\dim(V\cap N)$. Therefore 
\[
\dim Y^{\mathrm{ws}} - \dim [p](Y^{\mathrm{ws}}) - t' \le \frac{1}{2}\dim(V\cap N) - t' = \dim S - \dim \varphi_{/N}(S) -1 \le \dim Y -1.
\]
So $Y \subseteq S^{\mathrm{deg}}(t')$ by definition \eqref{EqDegLoci}. This shows that $S \subseteq S^{\mathrm{deg}}(t')$. So $t_0 \le t'$.

We are done.
\end{proof}

\section{A  discussion towards the arithmetic situation}\label{SectionFuture}
Let $S$ be a quasi-projective complex variety,  $f \colon X \rightarrow S$ be a smooth family of projective varieties of relative dimension $d$, and $N$ be a relatively ample line bundle on $X$. Let $Z$ be a family of homologically trivial cycles of codimension $n$ in $X/S$ with $n\le \frac {d+1}2$, which is homologically trivial and primitive on each geometric fiber 
$X_s$ in the sense that $c_1(N_s)^{d+2-2n}\cdot Z_s=0$ in $\Ch^{d+2-n}(X_s)$. 

Assume that $f$, $N$, $Z$ are defined over a number field $k$; in general, any family $X/S$ can be embedded into one defined over a number field $k$ by the Lefschetz principle.

The family $Z$ defines a normal function $\nu = \nu_Z \colon S \rightarrow \sJ ^n(X/S)_{\prim}$, $s \colon \mathrm{AJ}(Z_s)$ for the Abel--Jacobi map constructed by Griffith \cite[$\mathsection$11]{GriffithsAJ}, where $\sJ^n(X/S)_\prim$ is the intermediate Jacobian associate with the following VHS:
$$\BH^n:=\ker \left(c_1(N)^{d+2-2n} \colon R^{2n-1}f_*\BZ (n)\lra R^{2d-2n+3}f_*\BZ(d+2-n)\right).$$


We would like to propose the following conjecture on the rationality of the Betti strata.
\begin{conj} \label{conj-rationality}
The Betti stratum $S^{\mathrm{Betti}}(t)$ is defined over $k$ for each $t \ge 0$.
\end{conj}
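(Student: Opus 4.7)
The plan is to proceed by Galois descent. Since $S^{\mathrm{Betti}}(t)$ is Zariski closed in $S$ by Theorem~\ref{ThmZariskiClosedDegLoci} and $S$ is defined over the number field $k$, to prove Conjecture~\ref{conj-rationality} it suffices to show that $\sigma(S^{\mathrm{Betti}}(t)) = S^{\mathrm{Betti}}(t)$ for every $\sigma \in \Aut(\BC/k)$. In other words, the task is to give an algebraic, or ideally motivic, characterization of the Betti strata that is manifestly stable under $\Aut(\BC/k)$.

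The natural route is through the decomposition
\[
S^{\mathrm{Betti}}(t) = S_{\ge t} \cup \bigcup_{0 \le r \le t-1,~ 1\le j \le k} S_{\ge r} \cap \varphi^{-1}\!\left(E_j(t-r)\right)
\]
established inside the proof of Theorem~\ref{ThmZariskiClosedDegLoci}. The sets $S_{\ge r}$ are the loci where fibers of the period map $\varphi=\varphi_{\nu}$ have dimension at least $r$; since two points $s,s'\in S(\BC)$ lie in the same fiber of $\varphi$ precisely when the associated graded-polarized admissible VMHS $\BE_{\nu,s}$ and $\BE_{\nu,s'}$ are abstractly isomorphic, and this isomorphism class is determined by the geometric datum $(X/S,Z,N)$ over $k$, one expects $S_{\ge r}$ to descend to $k$ via an absolute-Hodge type argument. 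The sets $E_j(t')$ in turn parametrize points where the generic Mumford--Tate group is constrained to lie in some conjugate of $\mathrm{MT}(\sD_j)$ with prescribed atypical behaviour, so $\varphi^{-1}(E_j(t'))$ is a generalized Hodge locus for the admissible VMHS $\BE_{\nu}$; its algebraicity is the mixed version of Cattani--Deligne--Kaplan, which is known, and one expects its rationality by analogy with Voisin's conjecture on the field of definition of components of Hodge loci.

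The main obstacle is precisely the Galois invariance of these Hodge-type loci. In the pure case and under the Absolute Hodge Conjecture this follows from Deligne's principle that components of the Hodge locus of a geometric variation defined over $k$ are $\Aut(\BC/k)$-stable; unconditionally it is the content of Voisin's conjecture. In the mixed setting relevant here one would moreover need that the Mumford--Tate group of the stalk $\BE_{\nu,\sigma(s)}$ is $\sigma$-conjugate, inside $\mathbf{G}^{\sM}$, to the Mumford--Tate group of $\BE_{\nu,s}$; combining this with the standard fact that torsion multi-sections of $\sJ(\BV_{\BZ})$ are defined over $\ov{k}$ and Galois-stable, with the finiteness of the list $(\sD_j,N_j)$ provided by Proposition~\ref{PropFinitenessBogomolov} (applied over $k$), and with the decomposition above, would yield the conjecture. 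Making the absolute-Hodge input unconditional in the required generality for an arbitrary admissible normal function of geometric origin is the substantive difficulty and is the reason the statement is posed only as a conjecture.
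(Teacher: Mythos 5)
The statement you are addressing is posed in the paper as Conjecture~\ref{conj-rationality} and is \emph{not} proved there: the paper only records the known case $t=1$ for the Gross--Schoen and Ceresa cycles over subvarieties of $\sM_g$, established in \cite{GZ} by an arithmetic argument (the volume identity relating arithmetic and geometric volumes of adelic line bundles), together with partial results in which $k$ is replaced by $\IQbar$, via \cite{BOU} and, when the intermediate Jacobian is an abelian scheme, \cite{GaoHab}. Your text is likewise not a proof, as you yourself acknowledge in the last sentence. The Galois-descent framework is fine (Zariski closedness from Theorem~\ref{ThmZariskiClosedDegLoci} plus $\Aut(\BC/k)$-stability would indeed give definability over $k$), and the decomposition \eqref{EqDegLocit} of $S^{\mathrm{Betti}}(t)$ into the $S_{\ge r}$ and $\varphi^{-1}(E_j(t-r))$ is the right structural input; but the step you flag --- $\Aut(\BC/k)$-stability of the loci $E_j(t')$ (and of $S_{\ge r}$) --- is precisely the open content of the conjecture, not a technical point one can expect to borrow. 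These are (generalized, mixed) Hodge-theoretic loci of an admissible VMHS; their Galois stability is not known unconditionally, and the strongest unconditional results (the field-of-definition results of \cite{BOU}) give only $\IQbar$-rationality of Hodge loci in the pure polarized setting, not rationality over the given number field $k$, and not for the loci $E_j(t')$ defined by a prescribed fiber-dimension defect of $[p_{N_j}]$. A secondary inaccuracy: two points lie in the same fiber of $\varphi$ when the corresponding mixed Hodge structures on the \emph{fixed} lattice $E_\nu$ are $\Gamma$-conjugate, which is stronger than abstract isomorphism of $\BE_{\nu,s}$ and $\BE_{\nu,s'}$, so even the asserted descent of $S_{\ge r}$ needs an actual argument rather than an ``absolute-Hodge type'' expectation.

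It is also worth noting that your proposed route diverges from the one the authors envisage. Their evidence (the $t=1$ case of \cite{GZ}) comes from an arithmetic input, the volume identity, which they expect to extend to general families; the Hodge-loci/absolute-Hodge route you sketch is the one that, by the paper's own discussion, is only expected to yield the weaker statement over $\IQbar$. So the proposal correctly identifies why the statement is a conjecture, but it does not close, or even reduce, the essential gap.
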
 
In \cite{GZ}, we proved Conjecture~\ref{conj-rationality} for $t=1$ for the Gross--Schoen cycles and Ceresa cycles for any subvariety $S$ of $\sM_g$. The key point of the proof is the {\it volume identity} \cite[Thm.~5.1]{GZ}, which is an arithmetic result.  We expect this volume identity to be true for any such family $f \colon X\rightarrow S$. We point out that in the particular case where $k$ is replaced by $\IQbar$, some partial results towards Conjecture~\ref{conj-rationality} can be deduced from  \cite{BOU}, whose proof is purely geometric (functional transcendental method). In the case where $\sJ^n(X/S)_{\prim}$ is an abelian variety, Conjecture~\ref{conj-rationality} is known with $k$ replaced by $\IQbar$ by \cite[Prop.~4.2.4]{GaoHab}.

When the period map $\varphi = \varphi_{\nu}$ has finite fibers, we also expect the degeneracy locus $S^{\mathrm{deg}}(t)$ to be defined over $k$ for each $t \ge 0$. This is also proved in the Shimura case by  \cite[Prop.~4.2.4]{GaoHab} when $k$ is replaced by $\IQbar$.

\begin{ques}
Is it true that $Z_s$ is non-torsion in $\Ch^n(X_s)$ for all $s \in (S\setminus S^{\mathrm{Betti}}(1))(\BC)$?
\end{ques}
This question has an affirmative answer for {\it transcendental points} in $S\setminus S^{\mathrm{Betti}}(1)$. More precisely, denote by $S^{\mathrm{amp}}:= S\setminus S^{\mathrm{Betti}}(1)$. Then for any $s \in S^{\mathrm{amp}}(\BC)\setminus S^{\mathrm{amp}}(\IQbar)$, the cycle class $Z_s$ is non-torsion in $\Ch^n(X_s)$. To work with $\IQbar$-points, one needs the degeneracy loci defined in $\mathsection$\ref{SectionDegLoci} as indicated by \cite{GH}.

\bibliographystyle{amsalpha}

\end{document}